%%%LaTeX%%%%%
%%%Version: preliminary
\documentclass[12pt,reqno]{amsart}
\usepackage{amssymb,amscd,verbatim,latexsym}
\usepackage[all]{xy}
\SelectTips{eu}{11}
\usepackage{mathrsfs}
% remove this for the production version
%\usepackage[color,notcite,notref]{showkeys}
%\definecolor{labelkey}{rgb}{1,0,0}

%%%%%%%MACRO%%%%
%\setlength{\textheight}{23cm}
%\setlength{\textwidth}{13.5cm}
%\setlength{\oddsidemargin}{0.5cm}
%\setlength{\topmargin}{-1cm}
%\setlength{\evensidemargin}{\oddsidemargin}

% The Theorem environments
\setcounter{tocdepth}{1}
\swapnumbers
\newtheorem{thm}[subsection]{Theorem}
\newtheorem{propose}[subsection]{Proposition}
\newtheorem{lemma}[subsection]{Lemma}

\newtheorem{cor}[subsection]{Corollary}
\theoremstyle{definition}
\newtheorem{defn}[subsection]{Definition}
\newtheorem{remark}[subsection]{Remark}

\numberwithin{equation}{section}

% The new commands
\renewcommand{\d}{\mbox{\LARGE $\cdot $}}

\newcommand{\Gal}{{\rm Gal}\,}       % For Galois
\newcommand{\Lie}{{\rm Lie}\,}       % For Lie 
\newcommand{\Spec}{\operatorname{Spec}} %for Spec
\newcommand{\Hom}{\operatorname{Hom}}      % For Hom
      % For Ext
      % For Biext
\newcommand{\RHom}{\operatorname{RHom}}    % For RHom
   % For RGlobal
\newcommand{\DM}{{\bf DM}}          % For Voevodsky DM
\newcommand{\M}{{\sf EHM}}   % For Nori effective hom Motives
   % For Nori effective coh  Motives
 %  Homotopy invariant
 %Pre sheaves with transfers
\newcommand{\Rmod}{\text{$R$-{\sf mod}}}
\newcommand{\Kmod}{\text{$K$-{\sf mod}}}
\newcommand{\Zmod}{\text{$\Z$-{\sf mod}}}
\newcommand{\Qmod}{\text{$\Q$-{\sf mod}}}

\newcommand{\End}{\operatorname{End}}      % For endomorphisms

  % For sheaf Ext
  % For sheaf Tor
  %For sheaf Hom
  %For sheaf End
%For sheaf RHom
  % For internal Hom

   % For Projective space
\newcommand{\Aff}{\mathbb{A}}   % For Affine space
      % For additive cat.
      % For exact  cat.

 % For characters

\newcommand{\C}{\mathbb{C}}     % For Complex numbers
\newcommand{\Q}{\mathbb{Q}}     % For rational numbers
\newcommand{\Z}{\mathbb{Z}}     % For integers
\newcommand{\N}{\mathbb{N}}
    %Dieudonne
    %Witt vectors
\newcommand{\G}{\mathbb{G}}     % For group schemes
    % For Hyper
 % For HyperExtension
     % For right R
     % For left L
    % For SimpPic

        % For the image of a morphism
\renewcommand{\ker}{\operatorname{Ker}}  % For the kernel of a morphism
\newcommand{\coker}{\operatorname{Coker}} % For the cokernel of a morphism
        % For associated Graded
    % For dlog
\newcommand{\Pic}{{\rm Pic}}     % For the Picard variety
\newcommand{\Alb}{{\rm Alb}}     % For the Albanese variety
     % For the Picard complex
\newcommand{\LAlb}{{\rm LAlb}}     % For the Albanese complex
       % For LaumonPic
\newcommand{\LA}[1]{\mbox{${\rm L}_{#1}{\rm Alb}$}}

\newcommand{\Tot}{{\rm Tot}}     % For the Total Complex
\newcommand{\NS}  {{\rm NS}}      % For the N{\'e}ron Severi group

    % For rank
        % For the trace
      % For quasi-iso

\newcommand{\longby}[1]{\stackrel{#1}{\longrightarrow}}

\renewcommand{\tilde}{\widetilde}
\newcommand{\df}{\mbox{\,${:=}$}\,}
\newcommand{\ie}{{\it i.e.\/},\ }
\newcommand{\cf}{{\it cf.\/}\ }
\newcommand{\eg}{{\it e.g.\/},\ }
\newcommand{\et} {\textrm{\'et}}

\newcommand{\an}{\textrm{an}}
\newcommand{\eff}{{\rm eff}}
\newcommand{\gm}{{\rm gm}}

\renewcommand{\bar}{\overline}
\newcommand{\into}{\hookrightarrow}

  %For vertical `equals'
                                              %sign in display mode
 % For vertical
                                             %`isomorphism sign in diagrams
%\newcommand{\sZ}{\mbox{\scriptsize{$\Z$}}}   %For \Z (integers)
 %composition of maps

\newcommand{\colim}[1]{{\mathop{\rm
Colim}_{\buildrel\over{#1}}}\;}
\renewcommand{\lim}[1]{{\mathop{\rm
Lim}_{\buildrel\over{#1}}}\;}

\newcommand{\boxtensor}{\def\boxtimesten{\Box\kern-7.59pt\raise1.2pt
\hbox{$\times$} }}                                  %For boxtensor

% The new environments
\newcounter{elno}                   % This to number lists

\newcommand{\cC}{\mathcal{C}}

\newcommand{\cE}{\mathcal{E}}
\newcommand{\cF}{\mathcal{F}}
\newcommand{\cG}{\mathcal{G}}

\newcommand{\cM}{\mathcal{M}}
\newcommand{\cN}{\mathcal{N}}

\newcommand{\cR}{\mathcal{R}}

%%%%% End of macros

%%%%%%%%%%%%%%%%%%%%%%%%%%%%%%%%%%%%%

\begin{document}

\title{Nori $1$-motives}
\author{Joseph Ayoub}
\address{\newline
Institut f\"ur Mathematik\\ Universit\"at Z\"urich\\Winterthurerstr.~ 190\\ CH-8057 Z\"urich\\ 
Switzerland 
\newline 
\& CNRS\\ LAGA Universit\'e Paris 13\\ 99 avenue J.B.~Cl\'ement \\ 93430 Villetaneuse\\ France}
\email{joseph.ayoub@math.uzh.ch}
\thanks{The first author was supported in part by the Swiss National Science Foundation (NSF), Grant
No.~2000201-124737/1. The second author is grateful to IH\'ES for hospitality and excellent working conditions.}
\author{Luca Barbieri-Viale}
\address{\newline
Dipartimento di Matematica ``F. Enriques", Universit{\`a} degli Studi di Milano\\ Via C. Saldini 50\\ I-20133 Milano\\ Italy}
\email{luca.barbieri-viale@unimi.it}
\date{May, 2012} 
\keywords{Motives, Hodge theory, cohomology}
\subjclass [2000]{19E15, 14F42, 14C30, 18G55, 13D09}

\begin{abstract} \begin{sloppypar} 
Let $\M$ be Nori's category of effective homological 
mixed motives. In this paper, we consider the 
thick abelian subcategory 
$\M_1\subset \M$ generated by the $i$-th relative homology of pairs of varieties 
for 
$i\in \{0,1\}$. 
We show that $\M_1$ is naturally equivalent to 
the abelian category ${}^t\mathcal{M}_1$ of $1$-motives with torsion;
this is our main theorem. 
Along the way, we obtain several interesting results. Firstly,
we realize
${}^t\mathcal{M}_1$ as the universal abelian category obtained, 
using Nori's formalism, from 
the Betti representation of an explicit diagram 
of curves.  
Secondly, we obtain a conceptual proof of a theorem of Vologodsky on realizations of $1$-motives.
Thirdly, we verify a 
conjecture of Deligne on extensions of $1$-motives in the category of mixed realizations for those extensions that are effective in 
Nori's sense. 
\end{sloppypar}
\end{abstract}
\maketitle
\tableofcontents
\newpage

\section{Introduction}

Nori's construction of the category of effective homological 
mixed motives $\M$ is drafted in \cite{NN}, \cite{HS},
\cite{LV} and generalized in \cite{AR} (see also \cite{BN} for the abstract part of the construction). For details on $\M$ we here mainly refer to \cite{LV} (which is published and consistent with Nori's notation in \cite{NN}). 
The category $\M$ depends on a base field $k$
together with a complex embedding $\sigma:k\hookrightarrow \C$. 
Given a $k$-scheme $X$ and 
a closed subset $Y\subset X$, we have motives 
$\widetilde{H}_i(X,Y;\Z) \in \M$ for all $i\in \Z$.
Their Betti realisations are the usual relative homology groups
$H_i(X(\C),Y(\C);\Z)$.

For $n\in \N$, we denote 
$\M_n\subset \M$ the thick abelian subcategory 
generated by 
$\tilde{H}_i(X,Y;\Z)$
for all $i\leq n$ and all pairs $(X,Y)$ consisting of a 
$k$-scheme $X$ and a closed subset $Y\subset X$. 
Objects of
$\M_n$ will be called Nori $n$-motives.

We can well describe $\M_n$ for $n \leq 1$.
Quite straightforwardly, $\M_0$ is  
equivalent to the category of $0$-motives (see Proposition \ref{N0}).
The case $n=1$ is much more difficult, but nevertheless we are able to 
show the expected property: 
$\M_1$ is equivalent to the abelian category of $1$-motives with torsion; this is our main result 
(see Theorem \ref{Del=Nori}). 
Its proof relies on two preliminary results 
which are of independent interest. 
The first result (see Theorem \ref{thm:main-thm}) claims that the category of $1$-motives with 
torsion is the universal category, in the sense of Nori, 
obtained from the 
Betti representation of an 
explicit diagram of curves.
The second result (see Theorem
\ref{thm:eff-Del-conj}) is a particular case of 
a conjecture of Deligne \cite[2.4]{DE}. Roughly speaking, we prove that a mixed realization which is an extension of $1$-motives is itself a $1$-motive provided that it is \emph{effectively} coming from 
geometry, \ie is the mixed realization of a 
Nori \emph{effective} motive. 
(The original conjecture of Deligne would predict that the same
holds provided that the extension is the realization of a non necessarily effective Nori motive.)

Also, in the course of proving our main theorem, we obtain a conceptual proof of a 
result of Vologodsky comparing two 
Hodge realizations on the category of $1$-motives: 
the classical 
one constructed by Deligne, and the composition of 
Huber's Hodge realization with
the embedding of $1$-motives 
into Voevodsky's triangulated motives (see
Remark \ref{rem:new-volog}).
Finally, with rational coefficients, we are also able to 
construct a left adjoint to the inclusion 
$\M_1\hookrightarrow \M$ (see 
Theorem \ref{left-adj-EHM}).

Anyways, note that a comparison between the existing abelian category of $2$-motives proposed  in \cite{AY} and $\M_2$ is already far beyond the actual techniques.

\subsection*{\it Notation and Conventions} We let $k$ be our base field and tacitly fix an embedding $\sigma: k\into \C$. We denote 
$Sch_k$ the category of $k$-schemes. 
(By $k$-scheme we always mean a finite type, separated and reduced 
$k$-scheme.)
Given a $k$-scheme $X$, we denote $X^{\an}$ the associated analytic space given by $X(\C)$.  We fix a Noetherian commutative ring $R$ and 
denote $\Rmod$ the category of finitely generated $R$-modules.
(In practice, $R$ will be $\Z$ or $\Q$.)
If $E$ is an $R$-algebra which is not necessary commutative, 
we also denote $E\text{-{\sf mod}}$ the category of left $E$-modules 
which are finitely generated over $R$. (This will not be confusing: 
we only use this when $E$ is 
finitely generated as an $R$-module, in which case 
a left $E$-module is finitely generated if and only if it is so 
as an $R$-module.) 
We say that a 
functor $\cE\to \Rmod$ from an $R$-linear abelian category 
$\cE$ is \emph{forgetful}  
if it is $R$-linear, faithful and exact.
(For instance, the obvious functor 
$E\text{-{\sf mod}} \to \Rmod$ is forgetful.)
A \emph{thick abelian} subcategory is a full subcategory, containing 
the zero object, and
stable under sub-quotients and extensions. 
A \emph{thick triangulated} subcategory 
is a full subcategory, containing the zero object, and 
stable under cones, desuspensions and direct summands.

\section{Nori's universal category}

We recall Nori's construction of a universal abelian category from 
a representation of a diagram. In some situation, 
we give a 
characterization of this universal category.

\subsection{Generalities}
Recall the following construction due to Nori (see \cite{BN} and 
\cite[\S 5.3.3]{LV} for details). 
Given a representation $T: D \to \Rmod$ of a diagram $D$, 
there are an $R$-linear abelian category $\cC(T)$, 
a forgetful functor 
$F_T: \cC (T) \to \Rmod$ and a representation
$\tilde T :D \to  \cC (T)$ such that $F_T\circ\tilde T = T$.
Moreover, the triple $(\tilde{T},\cC(T),F_T)$ is initial 
(up to isomorphisms of functors) among factorizations of the representation $T$ as a representation to 
an $R$-linear abelian category followed by 
a forgetful functor. For the precise statement, we refer the 
reader to \cite[Theorem 41]{LV}.

\subsection{\/} When $D$ is finite, one takes
$\cC(T)=\End(T)\text{-{\sf mod}}$ where $\End(T)$ is the 
$R$-algebra of endomorphisms of $T$. More precisely,
an element of $\End(T)$ is a family 
$$(a_p)_{p\in Ob(D)}\in 
\prod_{p\in Ob(D)}\End(T(p))$$
such that for every arrow $a:p\to q$, one has 
$T(a)\circ a_p=a_q\circ T(a)$. For $p\in Ob(D)$, the algebra 
$\End(T)$ acts on the left on $T(p)$. The resulting left
$\End(T)$-module will be denoted by 
$\widetilde{T}(p)$. This gives the representation 
$\widetilde{T}:D\to \End(T)\text{-}\mathsf{mod}$.
We clearly have $T=F_T\circ \widetilde{T}$ where $F_T$ is the 
obvious forgetful functor. 

\subsection{\/} When the diagram is no longer assumed to be finite, we set
\begin{equation}
\label{C(T)=colim}
\cC (T)\df 2-\colim{E \subset D} \End (T|_{F})\text{-{\sf mod}}
\end{equation}
taking the colimit over all finite sub-diagrams $E$. 
The universality of 
$\mathcal{C}(T)$ is established in \cite{BN}. From this description, 
it follows that if $D$ is a filtered 
union of a family of (non necessarily finite) sub-diagrams $(D_{\alpha})_{\alpha\in I}$, there is an equivalence of 
categories
\begin{equation}
\label{colim-sim-C(T)}
2-\colim{\alpha\in I} \cC(T|_{D_{\alpha}})\simeq 
\mathcal{C}(T).
\end{equation}

\subsection{\/} Assume that $R$ is an integral domain 
with field of fractions $K$. 
Denote $T_K:D\to \Kmod$ the representation 
defined by $T_K(-)=T(-)\otimes_R K$
Then, there is a canonical 
equivalence of categories
\begin{equation}\label{chg-R-from-Z-to-Q}
\mathcal{C}(T)\otimes_R K\simeq 
\mathcal{C}(T_K).
\end{equation}
(For an $R$-linear category $\mathcal{A}$, 
the category $\mathcal{B}=\mathcal{A}\otimes_R K$ is given by 
$Ob(\mathcal{B})=Ob(\mathcal{A})$ and 
$\hom_{\mathcal{B}}(-,-)=\hom_{\mathcal{A}}(-,-)\otimes_RK$.)
To check this, it is enough, by 
\eqref{C(T)=colim}, to consider the case where 
$D$ is finite. As $K$ is a flat $R$-algebra, we have 
${\rm End}(T_K)={\rm End}(T)\otimes_R K$.
This easily implies the equivalence 
\eqref{chg-R-from-Z-to-Q}.

\subsection{\/}
\label{subsec-artinian}
Assume that the ring $R$ is artinian (\eg a field).
Then the pro-system $\{\End(T|_E)\}_{E\subset D}$, where $E$ 
runs over the ordered set of finite sub-diagrams of $D$, satisfies 
the Mittag-Leffler condition. Thus, it is tempting 
to consider
\begin{equation}
\label{end-T}
{\rm End}(T)\df\lim{E\subset D} {\rm End}(T|_E)
\end{equation}
endowed with the inverse limit topology. 
Then $0\in {\rm End}(T)$ has a fundamental system of neighborhoods 
consisting of open and closed two-sided ideals $I$ such that 
${\rm End}(T)/I$ 
is a finite length $R$-module.
We denote 
${\rm End}(T)\text{-}\mathsf{mod}$
the category of continuous ${\rm End}(T)$-modules 
which are of finite type over $R$ and discrete, \ie 
annihilated by an open and closed  
$2$-sided ideal of $\End(T)$. 
It follows immediately that there are equivalences of categories
(\cf \cite[\S 1.2.1]{NN})
\begin{equation}
\label{C(T)-if-R-artin}
\begin{array}{rcl}\mathcal{C}(T) & \simeq 
& 2-\colim{I} {\rm End}(T)/I\text{-}\mathsf{mod}\\
&  \simeq & {\rm End}(T)\text{-}\mathsf{mod}.
\end{array}
\end{equation}
(The colimit above is over the open and closed two sided 
ideals in ${\rm End}(T)$.)

\subsection{Criterion for an equivalence}
\label{subsec-criterion}
We keep the notation as in the previous paragraph. Assume that we 
are given a representation $S:D\to \cE$ into an $R$-linear 
abelian category
$\cE$ and a forgetful functor $G:\cE\to \Rmod$ such that 
$G\circ S=T$. By universality (\ie
\cite[Theorem 41]{LV})
there is an exact faithful $R$-linear functor 
\begin{equation}
\label{eqn:criter-equiv}
U:\cC(T)\to \cE
\end{equation}
such that $F_T=G\circ U$ and $S=U\circ \widetilde{T}$.

\begin{propose}
\label{prop:criterion-equi}
Assume the following conditions:
\begin{enumerate}

\item[(a)] Given $p,\,p'\in Ob(D)$, there exist 
$p\sqcup p'\in Ob(D)$, and arrows 
$i: p\to p\sqcup p'$ and 
$i': p'\to p\sqcup p'$ such that 
$$T(i)+T(i'): T(p)\oplus T(p')\to T(p\sqcup p')$$
is an isomorphism.

\item[(b)] Every object of $\cE$ is a quotient of an object of the form
$S(p)$ with $p\in Ob(D)$.

\item[(c)] For every map $S(p)\to A$ in $\cE$ there exists a 
finite sub-diagram $E\subset D$ containing $p$ such that 
$$\ker\{T(p)=G\circ S(p)\to G(A)\}$$ 
is a sub-$\End(T|_E)$-module of $T(p)$.

\end{enumerate}
Then $U$ is an equivalence of categories.
\end{propose}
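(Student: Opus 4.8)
The plan is to show that the exact faithful functor $U:\cC(T)\to\cE$ of \eqref{eqn:criter-equiv} is both full and essentially surjective; together with faithfulness and exactness this gives an equivalence. Condition (b) is tailor-made for essential surjectivity once we know $U$ is full: if $U$ is fully faithful, its essential image is a thick subcategory of $\cE$ (stable under subquotients) containing all the $S(p)=U(\widetilde T(p))$, hence containing every object of $\cE$ by (b). So the whole game is fullness, and this is where conditions (a) and (c) enter and where the main work lies.

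First I would reduce fullness to a statement about a single object. Using condition (a), the full subcategory of $\cC(T)$ on the objects $\widetilde T(p)$, $p\in Ob(D)$, is closed under finite direct sums (up to isomorphism), and by \cite[Theorem 41]{LV} every object of $\cC(T)$ is a subquotient of some $\widetilde T(p)$. A standard diagram-chase then shows that to prove $U$ full it suffices to prove the following: for every $p\in Ob(D)$ and every object $A\in\cE$, the map
\[
\Hom_{\cC(T)}(\widetilde T(p),M)\longrightarrow \Hom_{\cE}(S(p),U(M))
\]
is surjective whenever $M$ is a subobject of some $\widetilde T(q)$ and $U(M)=A$ — and in fact, after applying (b) to write $A$ as a quotient of some $S(p')$ and (a) to absorb $S(p)\oplus S(p')$ into a single $S(p'')$, one reduces further to surjectivity of
\[
\Hom_{\cC(T)}(\widetilde T(p),\widetilde T(p'))\longrightarrow \Hom_{\cE}(S(p),S(p')),
\]
i.e. to the claim that every morphism $f:S(p)\to S(p')$ in $\cE$ is induced by a morphism of $\End(T)$-modules $T(p)\to T(p')$.

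To produce such a morphism of $\End(T)$-modules, recall from \S\ref{subsec-artinian} and \eqref{C(T)-if-R-artin} that $\cC(T)\simeq\End(T)\text{-}\mathsf{mod}$ with $\End(T)=\lim_{E}\End(T|_E)$, so it is enough to find a finite sub-diagram $E\subset D$ containing $p$ and $p'$ such that the $R$-linear map $G(f):T(p)\to T(p')$ is $\End(T|_E)$-linear; then $G(f)$ is automatically $\End(T)$-linear (the action factors through $\End(T|_E)$ for the discrete modules $T(p)$, $T(p')$), hence defines the required morphism in $\cC(T)$ whose image under $U$ is $f$ by faithfulness of $G$. Here is exactly where condition (c) is used: apply it to the morphism $S(p)\to S(p')$, or more precisely to the induced morphism $S(p)\to \im(f)$ together with the inclusion $\im(f)\hookrightarrow S(p')$, to obtain a finite $E$ (which we enlarge to contain $p'$ as well, and to contain, via (a), an object $p\sqcup p'$ realizing the needed coproduct) such that $\ker\{T(p)\to T(\im f)\}$ is an $\End(T|_E)$-submodule. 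Combining this with the dual observation that $\im\{T(p)\to T(p')\}$ is, after possibly enlarging $E$ once more, an $\End(T|_E)$-submodule of $T(p')$ — this follows by applying (c) to $\im(f)\hookrightarrow S(p')$ and noting $T$ is exact — we get that $G(f)$ factors as $T(p)\twoheadrightarrow T(p)/\ker \xrightarrow{\sim} \im \hookrightarrow T(p')$ through $\End(T|_E)$-modules, hence is $\End(T|_E)$-linear. That establishes fullness.

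The main obstacle is the bookkeeping in the last step: one must choose the finite sub-diagram $E$ uniformly enough that $G(f)$ becomes $\End(T|_E)$-linear as a map, not merely that its kernel and image are submodules — the point being that $\End(T|_E)$-linearity of a map is not formally implied by $\End(T|_E)$-stability of kernel and image for an arbitrary intermediate diagram, so one genuinely needs (c) applied to the right morphisms and the compatibility that $\End(T|_{E'})$ surjects onto $\End(T|_E)$ for $E\subset E'$ (Mittag–Leffler, as noted in \S\ref{subsec-artinian}). Condition (a) is the lubricant that lets all these finitely many reductions be carried out inside the diagram $D$ rather than merely in $\cC(T)$; without it one could not assume the relevant coproducts are again of the form $\widetilde T(-)$. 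I expect no difficulty with essential surjectivity or with the reduction to a single $\Hom$-group — those are soft — so the proof will live or die on the explicit choice of $E$ in the fullness argument.
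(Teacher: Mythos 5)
There are two genuine gaps here, and you flag the first one yourself without closing it. Knowing that $\ker(G(f))$ and $\im(G(f))$ are $\End(T|_E)$-submodules does \emph{not} imply that $G(f):T(p)\to T(p')$ is $\End(T|_E)$-linear: an automorphism of $T(p)=T(p')=R^2$ has trivial kernel and full image (submodules for any action) without being equivariant. Moreover, condition (c) cannot be applied to the inclusion $\im(f)\hookrightarrow S(p')$ at all, since (c) only concerns maps \emph{out of} an object $S(p)$. The way to close this is a graph trick (this is Lemma \ref{lem:S-p-p-prime} of the paper): apply (c) to $g=f-{\rm id}_{S(p')}:S(p)\oplus S(p')\to S(p')$, after using (a) to identify $S(p)\oplus S(p')$ with $S(p\sqcup p')$ compatibly with the module structures; then $\ker(G(g))\subset T(p)\oplus T(p')$ is the graph of $G(f)$, and the graph being a submodule \emph{is} equivalent to equivariance of $G(f)$. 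The second gap is the claimed reduction of fullness of $U$ to surjectivity of $\Hom_{\cC(T)}(\widetilde T(p),\widetilde T(p'))\to \Hom_{\cE}(S(p),S(p'))$: for $M,N$ arbitrary subquotients of generators and $g:U(M)\to U(N)$, there is no projectivity or injectivity available to lift or extend $g$ to a map between the generators, so this is not a standard diagram chase and I do not see how to make it work.

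The paper sidesteps both issues by constructing an explicit functor $V:\cE\to\cC(T)$ in the opposite direction: for each $A\in\cE$ choose an epimorphism $\alpha:S(p)\twoheadrightarrow A$ using (b), use (c) to transport the $\End(T|_E)$-module structure along the surjection $G(\alpha)$ onto $G(A)$, check independence of $\alpha$ and functoriality using (a) and the graph lemma, and then deduce $V\circ U\simeq{\rm id}$ from the universal property; full faithfulness of $U$ follows from faithfulness of $V$, and essential surjectivity from (b). Two further, more minor, points: the essential image of a fully faithful exact functor need \emph{not} be a thick subcategory (so your justification of essential surjectivity is wrong as stated, although the conclusion is recoverable from fullness by writing any $A\in\cE$ as the cokernel of a map $S(q)\to S(p)$ via two applications of (b)); and the identification $\cC(T)\simeq \End(T)\text{-}\mathsf{mod}$ with $\End(T)$ the inverse limit is only established for artinian $R$ in \S\ref{subsec-artinian}, whereas the proposition is applied with $R=\Z$, so one must work with finite sub-diagrams $E$ and the colimit description \eqref{C(T)=colim} throughout.
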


We first note the following lemma.

\begin{lemma}
\label{lem:S-p-p-prime}
Assume that \ref{prop:criterion-equi}(a) is satisfied and 
that \ref{prop:criterion-equi}(c) holds when 
$A=S(q)$ with $q\in D$.
For every map $f:S(p)\to S(p')$ in 
$\cE$, there exists a 
finite sudiagram $E\subset D$ containing $p$ and $p'$ such that 
$G(f):G\circ S(p)\to G\circ S(p')$
is a morphism of 
$\End(T|_E)$-modules.
\end{lemma}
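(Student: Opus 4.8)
The idea is to reduce a morphism $f\colon S(p)\to S(p')$ to the "graph" situation where condition (c) (in the special case $A=S(q)$) applies directly. First I would use \ref{prop:criterion-equi}(a) to form $q\df p\sqcup p'$ with arrows $i\colon p\to q$, $i'\colon p'\to q$ so that $T(i)+T(i')\colon T(p)\oplus T(p')\iso T(q)$. Consider the map $g\df S(i') \circ f - S(i)\colon S(p)\to S(q)$ in $\cE$ (using that $S$ is additive and $\cE$ is $R$-linear, and that composing with $S$ of the arrows in $D$ makes sense). Apply the hypothesis that (c) holds for $A=S(q)$: there is a finite sub-diagram $E_0\subset D$, which we may enlarge to contain $p$, $p'$ and $q$ together with the arrows $i,i'$, such that $\ker\{G\circ S(p)\xrightarrow{G(g)} G\circ S(q)\}$ is a sub-$\End(T|_{E_0})$-module of $T(p)$.

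Next I would unwind what $G(g)$ is. Since $G\circ S=T$, we have $G\circ S(p)=T(p)$, $G\circ S(q)=T(q)$, and under the isomorphism $T(i)+T(i')\colon T(p)\oplus T(p')\iso T(q)$ the map $G(g)=T(i')\circ G(f)-T(i)$ becomes the map $x\mapsto (-x,\ G(f)(x))$ — that is, $G(g)$ is (up to the chosen isomorphism and a sign) the graph map of $G(f)\colon T(p)\to T(p')$. In particular $G(g)$ is \emph{injective}, so its kernel is zero; that by itself is not what we want. The useful point is rather that the \emph{image} of $G(g)$ is the graph of $-G(f)$ inside $T(p)\oplus T(p')\cong T(q)$. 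To exploit (c) I would instead feed in a different map: apply (c) to the morphism $S(p)\oplus S(p')\to S(q)$ induced by $S(i),S(i')$ — but this is an iso, so again not directly useful. The cleaner route is to apply (c) to $g'\df S(i)\circ \mathrm{pr} - \text{(stuff)}$; more precisely, I would apply (c) to the composite $S(q)\xrightarrow{\ } S(p')$ that one gets from the retraction $T(q)\cong T(p)\oplus T(p')\to T(p)$ followed by $f$. Concretely: by (a) there is an arrow structure realizing $T(q)\cong T(p)\oplus T(p')$; choose a map $h\colon S(q)\to S(p')$ in $\cE$ whose $G$-image is $\mathrm{pr}_{T(p)}$ followed by $G(f)$ minus $\mathrm{pr}_{T(p')}$, namely $h\df f\circ(\text{section of }S(i)) - (\text{section of }S(i'))$. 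Then $G(h)\colon T(q)\to T(p')$ has kernel exactly the graph of $G(f)$, and by (c) (for $A=S(p')$, which is of the allowed form) after enlarging $E_0$ to a finite $E$ this kernel is a sub-$\End(T|_E)$-module of $T(q)$.

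Finally I would transport this back. The graph $\Gamma\subset T(q)\cong T(p)\oplus T(p')$ of $G(f)$ being an $\End(T|_E)$-submodule means: for every $\varphi=(\varphi_r)_{r}\in\End(T|_E)$ and every $x\in T(p)$, we have $\varphi_q(x,\,G(f)(x)) \in \Gamma$, i.e. its second coordinate equals $G(f)$ of its first coordinate. But $\varphi$ is compatible with the arrows $i,i'$ (these lie in $E$), so $\varphi_q$ respects the decomposition $T(q)=T(i)T(p)\oplus T(i')T(p')$ and acts as $\varphi_p\oplus\varphi_{p'}$; hence the condition becomes $\varphi_{p'}(G(f)(x)) = G(f)(\varphi_p(x))$ for all $x$, i.e. $G(f)\colon T(p)\to T(p')$ is $\End(T|_E)$-linear, which is exactly the conclusion. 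The main obstacle is bookkeeping: one must choose the arrows in $D$ realizing (a) carefully, make sure all relevant arrows ($i$, $i'$, and whatever is needed so that $\varphi_q$ splits as $\varphi_p\oplus\varphi_{p'}$) are thrown into the finite sub-diagram $E$, and verify that "being a sub-$\End(T|_E)$-module" really does pass through the isomorphism $T(q)\cong T(p)\oplus T(p')$ — this uses precisely that the arrows $i,i'$ belong to $E$. Set-theoretically everything is a finite enlargement of sub-diagrams, so no real analytic or geometric input is needed beyond (a) and the special case of (c).
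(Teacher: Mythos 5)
Your final argument is correct and is essentially the paper's proof: the paper likewise forms $q=p\sqcup p'$, notes that $S(i)+S(i')$ is invertible (so the projections exist in $\cE$), applies condition (c) with $A=S(p')$ to the induced map $S(p\sqcup p')\to S(p')$ whose kernel is the graph of $G(f)$, and deduces $\End(T|_E)$-linearity of $G(f)$ from the graph being a submodule. The only difference is your initial false start with $g=S(i')\circ f-S(i)$, which you correctly discard before arriving at the same map the paper uses.
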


\begin{proof}
Consider the morphism
$$g=f-{\rm id}_{S(p')}:S(p)\oplus S(p')\to S(p').$$
It suffices to show that 
$\ker(G(g))\subset T(p)\oplus T(p')$ 
is a sub-$\End(T|_E)$-module for 
some finite sub-diagram $E\subset D$ containing 
$p$ and $p'$.

With the notation as in 
\ref{prop:criterion-equi}(a),
the morphism 
$$S(i)+S(i'):S(p)\oplus S(p')\to S(p\sqcup p')$$
is invertible as 
$G$ is faithful and exact.
Moreover, the isomorphism
$$G\circ S(i)+G\circ S(i'):
G\circ S(p)\oplus G\circ S(p')\simeq G\circ S (p\sqcup p')$$
is a morphism of $\End(T|_E)$-modules for any finite diagram $E$ 
containing the objects $p$, $p'$ and $p\sqcup p'$, and the arrows $i$ and $i'$. (Indeed, this 
coincides with the isomorphism
$T(i)+T(i'):T(p)\oplus T(p')\simeq T(p\sqcup p')$.)
Hence, it suffices to show that 
the kernel of the map 
$G\circ S(p\sqcup p')\to G\circ S(p')$
is a sub-$\End(T|_E)$-module for some finite sub-diagram $E$.
This is granted by \ref{prop:criterion-equi}(c)
for $A=S(p')$.
\end{proof}

\subsection{Proof of Proposition \ref{prop:criterion-equi}}
We first construct a functor 
$V:\cE\to \mathcal{C}(T)$. 
For $A\in \cE$, 
we choose 
an epimorphism 
$\alpha:S(p)\twoheadrightarrow A$ in $\cE$
with $p\in Ob(D)$. (For this, we
use \ref{prop:criterion-equi}(b).)
Let $E\subset D$ be a finite sub-diagram such that 
the kernel of $G(\alpha)$ is a sub-${\rm End}(T|_E)$-module,
\ie $E$ is as in
\ref{prop:criterion-equi}(c).
As $T(p)=G\circ S(p)\to G(A)$
is surjective, there is a unique structure of 
$\End(T|_E)$-module on $G(A)$ such that 
$G(\alpha)$ is $\End(T|_E)$-linear.
This $\End(T|_E)$-module defines an objet of 
$\mathcal{C}(T)$
which we denote $V(A,\alpha)$.
By construction, $F_T(V(A,\alpha))=G(A)$.

Next consider a commutative square in $\cE$:
$$\xymatrix{S(p)\ar@{->>}[r]^-{\alpha} \ar[d]^f & A \ar[d]^-e\\
S(p') \ar@{->>}[r]^-{\alpha'} & A'.\!}$$
Using Lemma \ref{lem:S-p-p-prime}, the map 
$G\circ S(p)\to G\circ S(p')$ is 
a morphism of $\End(T|_E)$-modules for 
some finite diagram $E\subset T$.
Enlarging $E$ so that the kernels of $G(\alpha)$ and 
$G(\alpha')$ are sub-${\rm End}(T|_E)$-modules, we get that 
$G(e):G(A) \to G(A')$ is $\End(T|_E)$-linear. 
This gives a morphism
$$V(e,f,\alpha,\alpha'):V(A,\alpha)\to V(A',\alpha')$$ 
in $\cC(T)$. By construction, 
$F_T(V(e,f,\alpha,\alpha'))=G(e)$.

We are now ready to define the functor 
$V$. First, we note that 
$V(A,\alpha)$ is independent of 
$\alpha$.
More precisely, given another epimorphism 
$\alpha':S(p')\twoheadrightarrow A$, there is a 
unique isomorphism $v_{\alpha,\alpha'}:V(A,\alpha)\simeq V(A,\alpha')$ 
such that $F_T(v_{\alpha,\alpha'})={\rm id}_{G(A)}$.
It is given by the composition of
$$V(A,\alpha)\overset{\sim}{\rightarrow}
V(A,\alpha+\alpha') 
\overset{\sim}{\leftarrow}
V(A,\alpha')$$
where the maps are  
$V({\rm id}_A,S(i),\alpha, \alpha+\alpha')$
and 
$V({\rm id}_A,S(i'),\alpha', \alpha+\alpha')$
with $i:p\to p\sqcup p'$ and 
$i':p'\to p\sqcup p'$ as in 
\ref{prop:criterion-equi}(a).

Similarly, given a morphism $e:A\to A'$, 
there is a unique
morphism $V(e):V(A,\alpha)\to V(A,\alpha')$
such that $F_T(V(e))=G(e)$. It is given by the 
composition of 
$$V(A,\alpha)\to V(A',\alpha+\alpha')
\overset{\sim}{\leftarrow}
V(A',\alpha')$$
where the maps are 
$V(e,S(i),\alpha, \alpha+\alpha')$
and $V({\rm id}_A,S(i'),\alpha', \alpha+\alpha')$.
Hence, choosing for every 
$A$ an epimorphism $\alpha_A$ yields a functor 
$V:\cE\to  \cC(T)$ such that 
$F_T\circ V=G$.

Now, as $G$ and $F_T$ are forgetful functors, 
it is immediate that 
$V$ is an $R$-linear faithful and exact functor.
Moreover, choosing the epimorphism $\alpha_A$ to be the identity 
when $A=S(p)$ for $p\in Ob(D)$, we see that 
$V\circ S=\widetilde{T}$.
Hence, from the universal property, we should 
get that $V\circ U\simeq {\rm id}_{\cC(T)}$.
Since $V:\cE\to \cC(T)$ is faithful,
$U:\cC(T) \to \cE$ is fully faithful.
Condition \ref{prop:criterion-equi}(b) 
implies now that $U$ is also essentially 
surjective. \hfill $\square$

\smallskip

For later use, we need to refine 
the criterion given by Proposition
\ref{prop:criterion-equi}.
We keep the notation as in 
\ref{subsec-criterion}.

\begin{propose}
\label{refine-criterion}
We assume that $R$ is an integral domain 
with field of fractions $K$.
We denote $T_K:D\to \Kmod$ the representation 
defined by $T_K(-)=T(-)\otimes_R K$
and we consider the pro-finite dimensional $K$-algebra
$\End(T_K)$ (see 
\ref{subsec-artinian}).

We assume that the conditions
\ref{prop:criterion-equi}(a) and 
\ref{prop:criterion-equi}(b)
are satisfied and that 
$T(p)$ is a torsion-free $R$-module for
every $p\in Ob(D)$. 
Also, we assume the following 
variant of 
\ref{prop:criterion-equi}(c):
\begin{enumerate}

\item[(c$'$)] For every map $S(p)\to A$ in $\cE\otimes_RK$,  
$$\ker\{T_K(p)=G(S(p))\otimes_R K\to G(A)\otimes_R K\}$$ 
is a sub-$\End(T_K)$-module of $T_K(p)$.

\end{enumerate}
Then $U$ is an equivalence of categories.

\end{propose}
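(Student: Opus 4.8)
The plan is to reduce to Proposition \ref{prop:criterion-equi} by verifying its condition (c); note that the proofs of that proposition and of Lemma \ref{lem:S-p-p-prime} use only conditions (a), (b), (c) together with the notation $\End(T|_E)$ for finite sub-diagrams $E$, so they remain valid for the present (not necessarily artinian) ring $R$. All the input will come from base change to $K$. I write $\cE_K$ for $\cE\otimes_R K$, which is again a $K$-linear abelian category, and let $S_K:D\to\cE_K$ and $G_K:\cE_K\to\Kmod$ be the induced representation and forgetful functor, so that $G_K\circ S_K=T_K$ and $G_K(A)=G(A)\otimes_R K$. For a finite sub-diagram $E$ one has, by flatness of $K$ over $R$, $\End(T_K|_E)=\End(T|_E)\otimes_R K$.

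The key point is an $R$-coefficient version of Lemma \ref{lem:S-p-p-prime}: for every morphism $h:S(p)\to S(q)$ in $\cE$ there is a finite $E\subset D$ containing $p$ and $q$ such that $G(h):T(p)\to T(q)$ is $\End(T|_E)$-linear. To prove this I would apply Lemma \ref{lem:S-p-p-prime} to the datum $(D,T_K,S_K,G_K)$, which is legitimate since $K$ is a field, so the artinian setting of \ref{subsec-artinian} does apply there. Its hypothesis (a) for $T_K$ is a base change of (a) for $T$; its other hypothesis, namely that \ref{prop:criterion-equi}(c) hold for $T_K$ when the target is some $S_K(q)$, follows from (c$'$): given a map $S_K(p')\to B$ in $\cE_K$, the kernel $\ker\{T_K(p')\to G_K(B)\}$ is by (c$'$) a sub-$\End(T_K)$-module of $T_K(p')$, and since the pro-system $\{\End(T_K|_E)\}_E$ satisfies Mittag-Leffler (as in \ref{subsec-artinian}) the image of $\End(T_K)$ in $\End_K(T_K(p'))$ already equals the image of $\End(T_K|_E)$ for $E$ large enough (depending only on $p'$), so that kernel is in fact a sub-$\End(T_K|_E)$-module. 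Lemma \ref{lem:S-p-p-prime} then provides a finite $E\ni p,q$ for which $G_K(h)$ is $\End(T_K|_E)$-linear, hence $\End(T|_E)$-linear through $\End(T|_E)\to\End(T|_E)\otimes_R K=\End(T_K|_E)$. Finally, since $T(p)$ and $T(q)$ are torsion-free, the inclusions $T(p)\into T_K(p)$ and $T(q)\into T_K(q)$ are $\End(T|_E)$-linear, and $G(h)$ is the restriction of $G_K(h)$ along them; therefore $G(h)$ is $\End(T|_E)$-linear, as claimed.

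Granting this, condition \ref{prop:criterion-equi}(c) is quick. For a morphism $f:S(p)\to A$ in $\cE$, exactness of $G$ shows that $\ker\{T(p)\to G(A)\}$ does not change if $A$ is replaced by the image of $f$, so I may assume $f$ is an epimorphism; choose by (b) an epimorphism $\beta:S(q)\twoheadrightarrow\ker(f)$ in $\cE$ and set $h:S(q)\to S(p)$ to be $\beta$ followed by the inclusion $\ker(f)\into S(p)$. Exactness of $G$ gives $\im(G(h))=\ker(G(f))=\ker\{T(p)\to G(A)\}$, and by the statement above $G(h)$ is $\End(T|_E)$-linear for some finite $E$ containing $p$ and $q$; hence $\ker\{T(p)\to G(A)\}=\im(G(h))$ is a sub-$\End(T|_E)$-module of $T(p)$, which is exactly \ref{prop:criterion-equi}(c). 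Proposition \ref{prop:criterion-equi} now yields that $U$ is an equivalence. I expect the key lemma of the second paragraph to be the main obstacle: it is precisely the step where one must navigate between $\End(T_K)$-, $\End(T_K|_E)$- and $\End(T|_E)$-module structures, and where the torsion-freeness hypothesis, the flatness of $K$ over $R$, and the Mittag-Leffler property over the field $K$ are each used.
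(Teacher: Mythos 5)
Your proof is correct, and it follows the same overall strategy as the paper (reduce to Proposition \ref{prop:criterion-equi} by showing that (c$'$) implies (c), using the stabilization $\im\{\End(T_K)\to\End(T_K(p))\}=\im\{\End(T_K|_E)\to\End(T_K(p))\}$, the identity $\End(T_K|_E)=\End(T|_E)\otimes_RK$, torsion-freeness to descend from $T_K$ to $T$, Lemma \ref{lem:S-p-p-prime}, and condition (b) to produce auxiliary epimorphisms), but the way you decompose the reduction is genuinely different. The paper first proves (c) directly for those $f:S(p)\to A$ with $G(A)$ torsion-free, via the intersection formula $\ker\{T(p)\to G(A)\}=T(p)\cap\ker\{T_K(p)\to G(A)\otimes_RK\}$; this in particular covers targets $S(q)$ and so legitimizes the integral use of Lemma \ref{lem:S-p-p-prime}, after which the general case is handled by covering $S(p)\times_AA'$ (for a torsion-free cover $A'\twoheadrightarrow A$) by some $S(p')$ and pushing the kernel forward along the equivariant map $G(f)$. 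You instead promote Lemma \ref{lem:S-p-p-prime} wholesale to an integral statement by applying it to the $K$-linear data $(T_K,S_K,G_K)$ and restricting along $T(p)\hookrightarrow T_K(p)$, and then you dispose of a general target in one stroke by writing $\ker\{T(p)\to G(A)\}$ as the \emph{image} of the equivariant map $G(h)$ for an epimorphism $S(q)\twoheadrightarrow\ker(f)$. Your route avoids both the intersection formula and the fiber-product construction, at the cost of having to check that the hypotheses of Lemma \ref{lem:S-p-p-prime} hold in the localized category $\cE\otimes_RK$ (which you do correctly via (c$'$) and Mittag--Leffler); both arguments are of comparable length and each is a legitimate proof.
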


\begin{proof}
Using Proposition \ref{prop:criterion-equi}, it remains to show that 
\ref{refine-criterion}(c$'$) implies
\ref{prop:criterion-equi}(c).
First, note that if $G(A)$ is a torsion-free $R$-module, 
then we have
\begin{equation}
\label{eq:refine-criterion-1}
\ker\{T(p)\to G(A)\}=
T(p)\cap \ker\{T_K(p)\to G(A)\otimes_RK\}
\end{equation}
where the intersection is taken inside $T_K(p)$.
Let $E\subset D$ be a finite sub-diagram containing $p$ and 
such that 
$${\rm Im}\{\End(T_K)\to \End(T_K(p))\}=
{\rm Im}\{\End(T_K|_E)\to \End(T_K(p))\}.$$
By \ref{refine-criterion}(c$'$), 
$\ker\{T_K(p)\to G(A)\otimes_RK\}$ is an 
sub-$\End(T_K|_E)$-module of $T_K(p)$.
As $\End(T_K|_E)=\End(T|_E)\otimes_RK$, we 
see that 
the right hand side of 
\eqref{eq:refine-criterion-1}
is an sub-$\End(T|_E)$-module of 
$T(p)$. This shows that
\ref{prop:criterion-equi}(c)
is true for $S(p)\to A$. 

For the general case we argue as follows. 
Given $A$, we may find an epimorphism
$e:A'\twoheadrightarrow A$ such that 
$G(A')$ is torsion-free (\eg use \ref{prop:criterion-equi}(b)).
We then choose an epimorphism
$S(p')\twoheadrightarrow S(p)\times_A A'$
and consider the
commutative square
$$\xymatrix{S(p') \ar[r]  \ar@{->>}[d]^-f & A'\ar@{->>}[d]^-e \\
S(p) \ar[r] & A.}$$
By the previous discussion, we know 
\ref{prop:criterion-equi}(c)
for $S(p')\to A'$; 
let $E'\subset D$ be a finite sub-diagram containing $p'$ and
such that $\ker\{T(p')\to G(A')\}$ is a sub-$\End(T|_{E'})$-module
of $T(p')$. 
On the other hand, by Lemma
\ref{lem:S-p-p-prime}, 
$G(f):G\circ S(p')\to G\circ S(p)$ is a morphism of 
$\End(T|_E)$-modules with $E$ a finite sub-diagram of $D$ containing $p$ and $p'$. As
$$G(f)\left(\ker\{T(p')\to G(A')\}\right)=\ker\{T(p)\to G(A)\}$$
we see immediately that 
$\ker\{T(p)\to G(A)\}$ is a sub-$\End(T|_{E\cup E'})$-module 
of $T(p)$.
\end{proof}

\section{Nori $n$-motives}

We here define the categories $\M_n$ of homological 
mixed $n$-motives. 
We also introduce some related categories which
we denote $\M_n'$. These are obtained by
simply repeating Nori's construction
while restricting ourself to homological degree 
less or equal to $n$.

\subsection{Nori's diagram} Recall the definition 
of Nori's diagram ${}^D(Sch_k)$ from \cite{LV}.
Objects are triples $(X,Y,i)$ where $X$ is a $k$-scheme, $Y\subset X$ is a closed subset and $i$ is an integer. 
Arrows are of the following kinds:
\begin{itemize}

\item[a)] $f:(X,Y,i)\to (X',Y',i)$ for any morphism $f: X\to X'$ such that $f(Y)\subset Y'$ and

\item[b)] $\delta:(X,Y,i)\to (Y,Z,i-1)$ for any $Z\subset Y \subset X$ closed in $X$.

\end{itemize}
We have a canonical representation (associated to the complex
embedding $\sigma:k\hookrightarrow \C$)
\begin{equation}\label{can-rep-sch-k}
H_*:{}^D(Sch_k)\to \Rmod
\end{equation}
given by $(X,Y,i)\leadsto H_i(X,Y;R)$ 
the singular homology of the pair $(X^{\an},Y^{\an})$ with 
$R$-coefficients. For $f$ and $\delta$ as before, 
$H_*(f)\df f_*$ is given by the functoriality of 
singular homology and 
$H_*(\delta)\df \partial$ is the boundary map in the long 
exact sequence associated to 
the triple $(X^{\an},Y^{\an},Z^{\an})$.

\begin{defn} 
\label{Nori-mot}
With the above notation, we set
\begin{equation}
\label{eqn:Nori-mot}
\M^R\df \cC \left(H_*:{}^D(Sch_k)\to \Rmod\right).
\end{equation}
This is the $R$-linear 
category of {\em Nori effective motives}.
When $R=\Z$, we simply write $\M$. 
If we wish to stress the dependence on the field $k$, we write
$\M^R(k)$ and $\M(k)$.

Given a triple $(X,Y,i)\in {}^D(Sch_k)$, we
denote $\widetilde{H}_i(X,Y;R)$ its image in 
$\M^R$  by the universal representation 
of ${}^D(Sch_k)$ associated to 
$H_*$. 
\end{defn}

\begin{defn}\label{defn:nori-n-mot-thick}
We denote $\M_n^R$ (or $\M_n^R(k)$ if we wish to stress the dependence on $k$) the thick abelian subcategory of $\M^R$ 
generated by $\widetilde{H}_i(X,Y;R)$
where $X$ is a $k$-scheme, $Y\subset X$ is a closed subset 
and $i$ is an integer such that $i\leq n$.
Objets in $\M_n^R$ are called \emph{Nori $n$-motives}.
When $R=\Z$, we simply write $\M_n$ (or $\M_n(k)$). 
\end{defn}

\begin{propose}\label{generate} 
In \ref{defn:nori-n-mot-thick} we can restrict to $k$-schemes
of dimension at most $n$, \ie 
$\M_n^R$ is the thick abelian subcategory of 
$\M^R$ generated by 
$\widetilde{H}_i(X,Y;R)$
where $X$ is a $k$-scheme of dimension at most $n$, 
$Y\subset X$ is a closed subset 
and $i$ is an integer such that $i\leq n$.
\end{propose}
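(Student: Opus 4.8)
The plan is to show that every generator $\widetilde H_i(X,Y;R)$ of $\M_n^R$ with $i\le n$ and $X$ of arbitrary dimension already lies in the thick abelian subcategory $\M_n^{R,\leq n}$ generated by the analogous objects coming from $k$-schemes of dimension at most $n$. Since $\M_n^{R,\leq n}$ is thick (stable under subquotients and extensions), it suffices to exhibit, for each such generator, a finite filtration or a finite set of exact sequences whose other terms are built out of pairs of dimension $\le n$. The obvious tool is \emph{dévissage by dimension}: given $(X,Y,i)$ with $\dim X > n$, pick a closed subset $Z\subset X$ with $\dim Z < \dim X$ such that $X\setminus Z$ is smooth (or at least such that the local homology is controlled away from $Z$), enlarge $Y$ to $Y\cup Z$, and run the long exact sequence of the triple. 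This is exactly the kind of reduction Nori's diagram ${}^D(Sch_k)$ is designed to accommodate, since the boundary maps $\delta:(X,Y,i)\to (Y,Z,i-1)$ and the functoriality maps are arrows in the diagram and hence induce morphisms in $\M^R$.

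Concretely, first I would reduce to the absolute case $Y=\emptyset$: the long exact homology sequence of the pair, which is realized by morphisms in ${}^D(Sch_k)$, expresses $\widetilde H_i(X,Y;R)$ as an extension built from $\widetilde H_i(X;R)$, $\widetilde H_i(Y;R)$ and $\widetilde H_{i-1}(Y;R)$; by induction on $\dim Y$ and thickness it is enough to treat $\widetilde H_i(X;R)$ for $X$ a $k$-scheme and $i\le n$. Next, I would induct on $d=\dim X$, the case $d\le n$ being the definition. For $d>n$, choose a dense open $U\subset X$ which is smooth, with closed complement $Z=X\setminus U$ of dimension $<d$. The key input is that for a smooth variety $U$ of dimension $d$, the homology $H_i(U^{\an};R)$ vanishes for $i$ far below $d$ — more precisely, Poincaré–Lefschetz duality identifies $H_i(U^{\an})$ with compactly supported cohomology in complementary degree, and via a good compactification with normal crossings boundary, the weight/dévissage argument shows $\widetilde H_i(U;R)$ (equivalently $\widetilde H_i^{BM}$) is governed by the strata of the boundary, all of dimension $<d$. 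Feeding this back through the long exact sequence of the pair $(X, Z)$ relating $\widetilde H_i(X)$ to $\widetilde H_i(U)$-type motives and $\widetilde H_*(Z)$, and using the inductive hypothesis on $Z$, one concludes.

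The step I expect to be the main obstacle is making the "homology in low degree is controlled by lower-dimensional strata" argument purely inside $\M^R$, i.e.\ realizing every step of the dévissage by genuine arrows of ${}^D(Sch_k)$ rather than merely on Betti realizations. The subtle point is that $\M^R$ is defined via Nori's universal construction, so one may only use operations that are visibly functorial at the level of the diagram: the long exact sequences of pairs and triples, proper pushforward, and open–closed decompositions expressed through such sequences. One must therefore phrase the excision/duality input as a statement about relative homology $\widetilde H_i(\bar X, \bar X\setminus U; R)$ for a compactification $\bar X\supset U$ with $\bar X\setminus U$ and the boundary divisor of dimension $<d$, and iterate; the vanishing range $i\le n<d$ is what guarantees the induction terminates with only pairs of dimension $\le n$ contributing. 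Keeping careful track of which pairs appear — and checking they all have dimension $\le n$ once $i\le n$ — is the only real work; everything else is thickness and the long exact sequence.
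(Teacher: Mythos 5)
Your reduction to the absolute case $Y=\emptyset$ and the outer induction on $\dim X$ match the paper's strategy, and your worry about realizing each step by genuine arrows of ${}^D(Sch_k)$ is not the real problem (the relevant comparison maps are induced by morphisms of pairs, and their surjectivity or bijectivity can be tested after the faithful exact forgetful functor to $\Rmod$). The genuine gap is that the key input you invoke is false. For a smooth variety $U$ of dimension $d$, the homology $H_i(U^{\an};R)$ does \emph{not} vanish for $i$ far below $d$: already $H_0\neq 0$, and an abelian surface has $H_1=\Z^4$ with empty boundary. The Andreotti--Frankel/Artin-type vanishing goes the other way, killing homology \emph{above} degree $d$ for affine $U$. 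Likewise, Poincar\'e duality identifies $H_i(U)$ with $H_c^{2d-i}(U)$, not with Borel--Moore homology (which you conflate with ordinary homology for non-proper $U$), and the long exact sequence of a compactification expresses $H_c^{2d-i}(U)$ in terms of $H^{2d-i}(\bar{X})$ \emph{plus} boundary terms; the piece $H^{2d-i}(\bar{X})\cong H_i(\bar{X})^{\vee}$ is exactly the part that is not governed by lower-dimensional strata, so your d\'evissage never reduces the dimension of the main term and the induction does not terminate.

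The missing idea is the Lefschetz hyperplane theorem. The paper first uses Chow's lemma, blow-up invariance of relative homology and resolution of singularities to reduce to $X$ smooth and quasi-projective with a smooth projective compactification $\bar{X}$ whose boundary $X_{\infty}$ is a simple normal crossing divisor. It then chooses a smooth ample divisor $\bar{Z}\subset\bar{X}$ transverse to $X_{\infty}$ and proves, by induction on the number of components of $X_{\infty}$ using Gysin sequences (the base case being the classical Lefschetz theorem), that $H_i(Z;R)\to H_i(X;R)$ is surjective for $i=\dim X-1$ and bijective for $i<\dim X-1$, where $Z=X\cap\bar{Z}$. Since $i\le n<\dim X$, this exhibits $\widetilde{H}_i(X;R)$ as a quotient of $\widetilde{H}_i(Z;R)$ with $\dim Z=\dim X-1$, and the induction on dimension closes by thickness. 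Some form of the Lefschetz theorem --- a device producing a lower-dimensional variety whose low-degree homology surjects onto that of $X$ --- is indispensable here; open--closed d\'evissage and duality alone cannot do it.
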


\begin{proof}
Let $\mathcal{A}\subset\M_n^R$
be the thick abelian subcategory generated by the objects 
as in the statement. 
It is enough to show that 
$\widetilde{H}_i(X,Y;R)\in \mathcal{A}$
for $X$ a $k$-scheme of arbitrary dimension, 
$Y\subset X$ a closed subset 
and $i\leq n$. 
We derive this by a standard argument relying on 
Lefschetz theorem (on hyperplane sections). We reproduce
this argument 
for the sake of completeness. 

We argue by induction on the dimension of $X$.
We may assume that ${\rm dim}(X)\geq n+1$; otherwise there is nothing to
prove. 
We may also assume that ${\rm dim}(Y)<{\rm dim}(X)$. 
(Indeed, replacing $X$ and $Y$ by the closures of the complements 
of the common irreducible components does not change the 
relative homology.) If $Y'\subset X$ is a closed subset containing 
$Y$, we have 
a short exact sequence
$$\widetilde{H}_i(Y',Y;R) \to \widetilde{H}_i(X,Y;R)\to 
\widetilde{H}_i(X,Y';R).$$
If ${\rm dim}(Y')<{\rm dim}(X)$, 
it suffices by induction to 
treat the case of $\widetilde{H}_i(X,Y';R)$. 
In other words, we may enlarge $Y$ and assume that 
$X\smallsetminus Y$ is smooth and that there exists a blow-up 
with center contained in $Y$ rendering $X$ a quasi-projective scheme.
(This is possible by Chow's lemma.)

Next, given a blow-up $X_1\to X$ inducing an isomorphism
$X_1\smallsetminus Y_1\to X\smallsetminus Y$, with $Y_1=X_1\times_XY$, 
we have an isomorphism
$\widetilde{H}_i(X_1,Y_1;R)\simeq 
\widetilde{H}_i(X,Y;R)$. Thus, using Chow's lemma and Hironoka's
resolution of singularities, we
may assume that $X$ is smooth and quasi-projective. Moreover, we may fix an embedding 
$X\hookrightarrow \bar{X}$ into a smooth and projective $k$-scheme $\bar{X}$ such that 
$X_{\infty}=\bar{X}\smallsetminus X$ is a simple normal crossing divisor.

Now, using the exact sequence
$$\tilde{H}_i(X,\emptyset;R) \to 
\tilde{H}_i(X,Y;R)\to \tilde{H}_{i-1}(Y,\emptyset;R)$$
and induction, it is enough to show that 
$\tilde{H}_i(X,\emptyset;R)\in \mathcal{A}$.
Let $\bar{Z}\subset \bar{X}$ be a smooth ample divisor
meeting transversally the divisor 
$X_{\infty}$. Letting $Z=X\cap \bar{Z}$, 
we claim that 
the map 
$$\tilde{H}_i(Z,\emptyset;R)\to \tilde{H}_i(X,\emptyset;R)$$
is surjective if $i={\rm dim}(X)-1$ (resp. bijective if
$i<{\rm dim}(X)-1$); this will finish the proof.
It is enough to check this after applying the 
forgetful functor $\M\to \Rmod$. 
We argue by induction on the dimension of $X$ and the number of 
irreducible components in $X_{\infty}$. If
$X_{\infty}=\emptyset$, then the claim is simply 
Lefschetz hyperplane theorem.
Otherwise, let $\overline{D}\subset X_{\infty}$ be an irreducible 
component, $E\subset X_{\infty}$ 
the union of the remaining irreducible components and $D=\overline{D}\smallsetminus E$.
We then have a commutative diagram
$$\xymatrix@R=1.7pc@C=1.5pc{H_{i-1}(D;R) \ar[r] \ar[d] & H_i(X;R) \ar[r] \ar[d] & H_i(\overline{X}\smallsetminus E;R) \ar[r] \ar[d] & H_{i-2}(D;R) \ar[d]^-{\sim}\\
H_{i-1}(D\cap \overline{Z};R) \ar[r] & H_i(Z;R) \ar[r] & H_i(\overline{Z}\smallsetminus E;R) \ar[r] & H_{i-2}(D\cap \overline{Z};R)}$$
where the lines are part of Gysin long exact sequences.
The first and third vertical arrows are surjective 
(resp. bijective) by induction. 
The surjectivity (resp. bijectivity) of the second vertical 
arrow follows by a simple diagram chasing. 
\end{proof}

\subsection{\/}
Consider the full sub-diagram 
${}^D(Sch_k)_{\leq n}\subset {}^D(Sch_k)$
consisting of triples $(X,Y,i)$ with $i\leq n$.
As in \ref{Nori-mot} we set
\begin{equation}
\label{eqn:Nori-n-mot}
\M'^R_n\df \cC \left(H_*:{}^D(Sch_k)_{\leq n}
\to \Rmod\right).
\end{equation}
(When $R=\Z$, we simply write $\M'_n$.
If we wish to stress the dependence on the field $k$, we write
$\M'^R_n(k)$ and $\M'_n(k)$.)
For $i\leq n$, we also denote 
$\widetilde{H}'_i(X,Y;R)$ the object in 
$\M_n'^R$ associated to 
$(X,Y,i)$ by the universal representation
$$\widetilde{H}'_*:{}^D(Sch_k)_{\leq n}
\to \M_n'^R.$$
By universality, there is a faithful exact functor
\begin{equation}
\label{eq:compar-n-mot}
\M'^R_n\to \M^R_n.
\end{equation}
We conjecture that 
\eqref{eq:compar-n-mot} is an equivalence of categories although
we expect this to be a difficult problem.

\begin{lemma}
\label{lim} 
Let $n\leq n'$ be non negative integers. There are (faithful, exact) functors $\M'^R_n\to \M'^R_{n'}$ inducing an equivalence  
$$2-\colim{n \geq 0} \M'^R_n \cong \M^R.$$
\end{lemma}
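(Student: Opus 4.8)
The plan is to construct the functors $\M'^R_n \to \M'^R_{n'}$ via Nori's universality and then identify the $2$-colimit with $\M^R$ using the description \eqref{C(T)=colim} of the universal category as a $2$-colimit over finite sub-diagrams. First I would observe that there is an obvious inclusion of diagrams ${}^D(Sch_k)_{\leq n} \hookrightarrow {}^D(Sch_k)_{\leq n'}$ (the arrows of type b) stay within a bounded range of indices, so no completeness issue arises), compatible with the representations $H_*$ on both sides. By the universal property of Nori's construction (\cite[Theorem 41]{LV}), the composite ${}^D(Sch_k)_{\leq n} \to {}^D(Sch_k)_{\leq n'} \to \M'^R_{n'} \to \Rmod$ factors through a faithful exact $R$-linear functor $\M'^R_n \to \M'^R_{n'}$, and similarly $\M'^R_n \to \M^R$ through the inclusion into the full diagram; these functors are compatible, giving a cone and hence a functor $2\text{-}\colim{n} \M'^R_n \to \M^R$.

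The key point is that this functor is an equivalence, and for that I would use the formula \eqref{C(T)=colim}, namely that for any representation $T:D\to\Rmod$ one has $\cC(T) = 2\text{-}\colim{E\subset D}\End(T|_E)\text{-}\mathsf{mod}$ over \emph{finite} sub-diagrams $E$. Applying this to $D={}^D(Sch_k)$ and to each $D_n={}^D(Sch_k)_{\leq n}$, everything reduces to the elementary observation that ${}^D(Sch_k)$ is the filtered union $\bigcup_n {}^D(Sch_k)_{\leq n}$: indeed every object $(X,Y,i)$ lies in ${}^D(Sch_k)_{\leq n}$ as soon as $n\geq i$, and every arrow between two fixed objects is contained in some ${}^D(Sch_k)_{\leq n}$. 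More precisely, every finite sub-diagram $E\subset {}^D(Sch_k)$ is contained in ${}^D(Sch_k)_{\leq n}$ for $n$ large enough (take $n$ to be the maximum of the indices occurring in the finitely many objects of $E$). Thus the indexing category of finite sub-diagrams of ${}^D(Sch_k)$ is the filtered union of the indexing categories of finite sub-diagrams of the ${}^D(Sch_k)_{\leq n}$, and a standard cofinality/interchange argument for $2$-colimits of filtered systems of abelian categories yields
\[
2\text{-}\colim{n\geq 0}\M'^R_n \;=\; 2\text{-}\colim{n\geq 0}\;2\text{-}\colim{E\subset {}^D(Sch_k)_{\leq n}}\End(H_*|_E)\text{-}\mathsf{mod}\;\simeq\; 2\text{-}\colim{E\subset {}^D(Sch_k)}\End(H_*|_E)\text{-}\mathsf{mod}\;=\;\M^R.
\]
This is exactly an instance of the compatibility \eqref{colim-sim-C(T)} applied to the filtered union ${}^D(Sch_k)=\bigcup_n {}^D(Sch_k)_{\leq n}$, so in fact the cleanest write-up is simply to invoke \eqref{colim-sim-C(T)} directly with $D_{\alpha}={}^D(Sch_k)_{\leq n}$.

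The only mild subtlety — and the step I would be most careful about — is checking that the equivalence produced by \eqref{colim-sim-C(T)} is the \emph{same} functor as the one coming from universality, i.e.\ that it is compatible with the structural representations $\widetilde{H}'_*$ and $\widetilde{H}_*$. This is a formal matter of tracing through the construction in \cite{BN}: the functor $\M'^R_n\to\M^R$ from universality is characterized (up to unique isomorphism of functors) by commuting with the forgetful functors and sending $\widetilde{H}'_i(X,Y;R)$ to $\widetilde{H}_i(X,Y;R)$, and the transition maps in \eqref{colim-sim-C(T)} have the same characterization at each finite level, so the two agree by the uniqueness clause in \cite[Theorem 41]{LV}. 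I do not anticipate a genuine obstacle here; the statement is essentially a bookkeeping consequence of the $2$-colimit description of $\cC(T)$, and the main work is just to phrase the filtered-union argument precisely and to note faithfulness and exactness are preserved under filtered $2$-colimits.
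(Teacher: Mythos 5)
Your proposal is correct and follows exactly the paper's route: the paper likewise dismisses the construction of the transition functors as clear and identifies the $2$-colimit with $\M^R$ as a direct instance of \eqref{colim-sim-C(T)} applied to the filtered union ${}^D(Sch_k)=\bigcup_n {}^D(Sch_k)_{\leq n}$. The extra care you take about compatibility with the structural representations is sound but not something the paper spells out.
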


\begin{proof}
The first claim in clear and 
the second 
is a particular case of \eqref{colim-sim-C(T)}. 
\end{proof}

\begin{propose}\label{generate-prime}
The category $\M'^R_n$ is generated, as a thick 
abelian subcategory of itself, by 
$\widetilde{H}'_i(X,Y;R)$ 
where $X$ is a $k$-scheme of dimension at most $n$, 
$Y\subset X$ is a closed subset 
and $i$ is an integer such that $i\leq n$.
\end{propose}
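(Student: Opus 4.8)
The plan is to mimic the argument of Proposition \ref{generate}, but to check at each step that the reductions only ever involve $k$-schemes of dimension at most $n$ and triples $(X,Y,i)$ with $i\leq n$, so that every object produced along the way actually lies in the sub-diagram ${}^D(Sch_k)_{\leq n}$ and hence its image $\widetilde{H}'_i(X,Y;R)$ makes sense in $\M'^R_n$. Let $\mathcal{A}\subset \M'^R_n$ be the thick abelian subcategory generated by the objects in the statement. We must show $\widetilde{H}'_i(X,Y;R)\in\mathcal{A}$ for an arbitrary pair $(X,Y)$ and every $i\leq n$, and we argue by induction on $\dim(X)$, the case $\dim(X)\leq n$ being trivial.

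First I would run the same preliminary reductions as in Proposition \ref{generate}: using the short exact sequence
$$\widetilde{H}'_i(Y',Y;R)\to \widetilde{H}'_i(X,Y;R)\to \widetilde{H}'_i(X,Y';R)$$
(valid in $\M'^R_n$ since all three triples have homological index $i\leq n$, by the very construction of $\M'^R_n$), together with blow-up invariance, Chow's lemma and resolution of singularities, reduce to $X$ smooth quasi-projective with a fixed good compactification $X\hookrightarrow\bar X$, $X_\infty=\bar X\smallsetminus X$ a simple normal crossing divisor. Then, via $\widetilde{H}'_i(X,\emptyset;R)\to\widetilde{H}'_i(X,Y;R)\to\widetilde{H}'_{i-1}(Y,\emptyset;R)$ and the inductive hypothesis on $Y$ (which has smaller dimension), it suffices to treat $\widetilde{H}'_i(X,\emptyset;R)$. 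Here one must be slightly careful: the intermediate boundary maps and functoriality maps used are all arrows of ${}^D(Sch_k)$ between triples with index $\leq n$ (note $i\leq n$ forces $i-1\leq n$ as well), so all the exact sequences invoked are genuine exact sequences in $\M'^R_n$ — this is the only place where one has to pause and confirm that passing to the restricted diagram loses nothing, and it is immediate from the definition of the arrows (a) and (b) together with $i\leq n$.

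Next, exactly as in Proposition \ref{generate}, choose a smooth ample divisor $\bar Z\subset\bar X$ meeting $X_\infty$ transversally, set $Z=X\cap\bar Z$, and use the Lefschetz-type claim that $\widetilde{H}'_i(Z,\emptyset;R)\to\widetilde{H}'_i(X,\emptyset;R)$ is surjective for $i=\dim(X)-1$ and bijective for $i<\dim(X)-1$. Since we only need this for $i\leq n<\dim(X)$, we are always in the bijective/surjective range, and as observed in Proposition \ref{generate} it can be checked after applying the forgetful functor $\M'^R_n\to\Rmod$, i.e. purely on singular homology, where it follows from the Lefschetz hyperplane theorem by the same double induction on $\dim X$ and the number of components of $X_\infty$ (the Gysin diagram chase is identical, all homology groups appearing being of spaces of dimension $<\dim X$, hence corresponding to triples already handled by the dimension induction inside $\M'^R_n$). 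Thus $\widetilde{H}'_i(X,\emptyset;R)$ is a subobject of $\widetilde{H}'_i(Z,\emptyset;R)$ with $\dim(Z)=\dim(X)-1$, which lies in $\mathcal{A}$ by the induction hypothesis, so $\widetilde{H}'_i(X,\emptyset;R)\in\mathcal{A}$ and we are done.

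The only genuine obstacle — and it is a mild one — is bookkeeping: one has to verify that every triple $(X,Y,i)$ and every arrow that appears in the argument of Proposition \ref{generate} stays within ${}^D(Sch_k)_{\leq n}$, so that each exact sequence and each morphism is already available in $\M'^R_n$ rather than only in $\M^R_n$. Because all the homological indices occurring are $\leq i\leq n$, this is automatic; there is no need to re-prove the Lefschetz input, which is a statement about singular homology and therefore insensitive to which ambient Nori category we work in. Hence the proposition reduces to the observation that the proof of Proposition \ref{generate} is, verbatim, a proof inside $\M'^R_n$.
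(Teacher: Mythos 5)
Your proposal is correct and is exactly the paper's argument: the paper proves Proposition \ref{generate-prime} by declaring that one replaces $\widetilde{H}_i$ with $\widetilde{H}'_i$ throughout the proof of Proposition \ref{generate}, and your careful bookkeeping (all triples and arrows used have homological index $\leq n$, and the Lefschetz input is checked after the forgetful functor) is precisely the verification that this substitution is legitimate.
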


\begin{proof}
A proof is obtained by replacing $\widetilde{H}_i$
by $\widetilde{H}'_i$ everywhere in the proof 
of Proposition \ref{generate}. 
\end{proof}

\section{The easy case of Artin motives}

In this section, we consider the case 
$n=0$.

\subsection{\/}\label{0-motive-categ}
Let $\cM_0^R=\cM_0^R(k)$ be the category of $0$-motives 
(with coefficients in $R$), 
\ie $R$-constructible \'etale sheaves on $Et_k$, 
the small \'etale site of $k$. (Recall that a sheaf 
on $Et_k$ is $R$-constructible if and only if 
it is locally constant and its stalks are 
finitely generated $R$-modules.) 
Clearly, $\cM_0^R$
is abelian. Moreover, the fiber functor associated 
to the geometric point given by the 
complex embedding $\iota:k\hookrightarrow \C$, 
yields a forgetful functor 
\begin{equation}
\label{fiber-funct}
\iota^*:\cM_0^R \to \Rmod.
\end{equation}
Note the following observation.

\begin{lemma}
\label{h-M0}
There exist a representation 
$S:{}^D(Sch_k)_{\leq 0} \to \mathcal{M}_0^R$
and an isomorphism of representations
$\iota^*\circ S\simeq H_*$.
\end{lemma}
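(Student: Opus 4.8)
The plan is to construct the representation $S$ degree by degree. Recall that ${}^D(Sch_k)_{\leq 0}$ has objects $(X,Y,i)$ with $i \leq 0$, and two kinds of arrows: functorial maps $f$ (preserving degree) and boundary maps $\delta:(X,Y,i)\to (Y,Z,i-1)$ (lowering degree by one). I would first deal with degree $0$ and then with all negative degrees.

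For $i=0$: the relative homology $H_0(X^{\an},Y^{\an};R)$ is the free $R$-module on the set of connected components of $X^{\an}$ not meeting $Y^{\an}$, i.e.\ $H_0$ of the ``quotient'' $X^{\an}/Y^{\an}$. Since the connected components of $X^{\an}=X(\C)$ are, by the choice of embedding $\iota$, in natural $\Gal(\bar k/k)$-equivariant bijection with the geometric points of $\pi_0(X)$ (the scheme of constants, a finite \'etale $k$-scheme), one sets $S(X,Y,0)$ to be the $R$-constructible \'etale sheaf on $Et_k$ represented by $\pi_0(X)\smallsetminus \pi_0(Y)$ — more precisely, the relative version: the cokernel of $R[\pi_0(Y)]\to R[\pi_0(X)]$ in $\cM_0^R$, where $R[-]$ denotes the free $R$-constructible sheaf on a finite \'etale scheme. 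For a degree-preserving arrow $f:(X,Y,0)\to(X',Y',0)$ one uses functoriality of $\pi_0$. Applying $\iota^*$ recovers exactly $H_0(X^{\an},Y^{\an};R)$ with its functoriality, which gives the isomorphism of representations in degree $0$.

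For $i<0$: here $H_i(X^{\an},Y^{\an};R)=0$, so one simply sets $S(X,Y,i)=0$. Every arrow into or out of a negative-degree object then must go to the zero object, so there is nothing to check for arrows of type (a) in negative degree, and arrows of type (a) landing in degree $0$ from degree $0$ are handled above. The only remaining arrows are the boundary maps $\delta:(X,Y,0)\to(Y,Z,-1)$: since the target is $0$, $S(\delta)$ is the zero map, which matches $\iota^*$ because $H_{-1}$ vanishes identically; and $\delta:(X,Y,i)\to(Y,Z,i-1)$ with $i\leq 0$ all involve zero objects. Thus $S$ is a well-defined representation and $\iota^*\circ S = H_*$ on objects and arrows, yielding $\iota^*\circ S\simeq H_*$.

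The only genuinely substantive point — and the step I would expect to require the most care — is the identification, natural in $X$ and compatible with the $\Gal(\bar k/k)$-action, between the set of connected components of $X(\C)$ and the $\bar k$-points of $\pi_0(X)$; this rests on the fact that a connected component of a $k$-scheme of finite type remains a single Galois orbit of components after base change to $\bar k$ (so $\pi_0$ of the base change carries a transitive-on-fibers Galois action), together with the comparison between the \'etale and complex topologies giving $\pi_0(X(\C))\cong \pi_0(X_{\bar k})$. Once this is in place, defining $S$ on the relative objects $(X,Y,0)$ as a cokernel in the abelian category $\cM_0^R$ and checking functoriality is routine, and the compatibility with $\iota^*$ is immediate from the definition of the fiber functor \eqref{fiber-funct}.
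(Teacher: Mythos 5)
Your construction coincides with the paper's: the paper sets $S(X,Y,0)=\pi_0(X)\otimes R\,/\,\pi_0(Y)\otimes R$, which is exactly your cokernel of $R[\pi_0(Y)]\to R[\pi_0(X)]$ in $\cM_0^R$, and it likewise rests on the Galois-equivariant identification of $\pi_0(X(\C))$ with the geometric points of $\pi_0(X)$ (the paper leaves the functoriality check and this comparison to the reader, and silently ignores the degrees $i<0$, which you handle explicitly by setting $S=0$ there). The proposal is correct and takes essentially the same route.
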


\begin{proof}
An objet of ${}^D(Sch_k)_{\leq 0}$
consists of a triple $(X,Y,0)$. 
We define
$$S(X,Y,0)\df 
\frac{\pi_0(X)\otimes R}{\pi_0(Y)\otimes R}.$$
(In the above formula, $\pi_0(Z)$ denote the 
\'etale $k$-scheme of geometric connected components of 
a $k$-scheme $Z$ and, for $V\in Et_k$,  
$V\otimes R$ is the \'etale sheaf associated
to the presheaf: $U\in Et_k \leadsto \bigoplus_{U\to V} R$.)
The verification that $S$ gives a representation is 
easy and is left to the reader. 
Almost from the construction we have 
$$\iota^*\circ S(X,Y,0)=\frac{\bigoplus_{x\in \pi_0(X^{\rm an})} R}{\bigoplus_{y\in \pi_0(Y^{\rm an})}R}.$$
This gives the required identification 
$\iota^*\circ S=H_*$.
\end{proof}

\subsection{\/} From Lemma
\ref{h-M0} and universality (\ie 
\cite[Theorem 41]{LV}) we get a canonical 
functor:
\begin{equation}
\label{MN0-to-M0}
\M'^R_0\to \mathcal{M}_0^R.
\end{equation}

\begin{propose} 
\label{N0} 
The functor \eqref{MN0-to-M0}
is an equivalence of categories.
\end{propose}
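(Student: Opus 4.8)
The plan is to apply the equivalence criterion of Proposition \ref{prop:criterion-equi} to the representation $S:{}^D(Sch_k)_{\leq 0}\to \cM_0^R$ constructed in Lemma \ref{h-M0}, together with the forgetful functor $\iota^*:\cM_0^R\to\Rmod$, so that the functor \eqref{MN0-to-M0} is exactly the functor $U$ of \eqref{eqn:criter-equiv}. It therefore suffices to verify the three hypotheses (a), (b), (c) of that proposition for the diagram $D={}^D(Sch_k)_{\leq 0}$ and the representation $T=H_*$. Condition (a) is straightforward: given triples $(X,Y,0)$ and $(X',Y',0)$, one takes the disjoint union $(X\sqcup X', Y\sqcup Y', 0)$, with the obvious closed immersions as arrows; since singular homology (equivalently the formula for $S$) turns disjoint unions into direct sums, $H_*(i)+H_*(i')$ is an isomorphism.

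For condition (b), one must show every object $A$ of $\cM_0^R$ — an $R$-constructible étale sheaf on $Et_k$ — is a quotient of some $S(X,Y,0)$. First I would reduce to $S$ of the form $\pi_0(X)\otimes R$ with $X$ a finite étale $k$-scheme (take $Y=\emptyset$); note these are exactly the sheaves $V\otimes R$ for $V\in Et_k$. Then the point is that any $R$-constructible sheaf $A$ is a quotient of a finite direct sum of sheaves of the form $V\otimes R$: étale-locally $A$ is a constant sheaf with finitely generated stalk, so after choosing a finite étale cover $V\to\Spec k$ trivializing $A$ and generators of the stalk, one produces a surjection $(V\otimes R)^{\oplus m}\twoheadrightarrow A$; using condition (a) this direct sum is itself of the form $S(X,\emptyset,0)$. (Equivalently, one can invoke that $\cM_0^R$ is equivalent to the category of finitely generated discrete $R[\Gal(\bar k/k)]$-modules, in which every object is a quotient of a finite sum of modules $R[\Gal(\bar k/k)/H]$ for $H$ open, and such a module is $\iota^*$ of $\pi_0(V)\otimes R$ for $V$ the corresponding finite étale $k$-scheme.)

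Condition (c) is where the real content lies, and I expect it to be the main obstacle. One is given a map $S(X,Y,0)\to A$ in $\cM_0^R$ and must find a \emph{finite} sub-diagram $E\subset {}^D(Sch_k)_{\leq 0}$ containing $(X,Y,0)$ such that $\ker\{H_0(X,Y;R)=\iota^*S(X,Y,0)\to\iota^* A\}$ is stable under the action of $\End(H_*|_E)$. The key observation is that the endomorphism algebra already sees enough "Galois symmetry" once $E$ is large enough: the Galois action on $\iota^* S(X,Y,0)$ — equivalently the permutation action on geometric connected components of $X$ relative to those of $Y$ — is induced by arrows of the diagram. Concretely, for $g\in\Gal(\bar k/k)$ factoring through a finite quotient $\Gal(L/k)$ with $X,Y,A$ all defined and split over $L$, the automorphism $g$ of $X_L$ (and of the finite étale cover computing $A$) gives an arrow in the diagram, hence an element of $\End(H_*|_E)$ for $E$ containing these finitely many triples and arrows. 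Thus for $E$ large enough, $\End(H_*|_E)$ surjects onto (the relevant finite quotient of) the group algebra $R[\Gal(\bar k/k)]$ acting on $\iota^*S(X,Y,0)$; since $S(X,Y,0)\to A$ is a morphism of sheaves, hence Galois-equivariant, its kernel is a sub-$R[\Gal]$-module, therefore a sub-$\End(H_*|_E)$-module. I would spell this out by choosing $E$ to contain: the triple $(X,Y,0)$; a triple $(X',Y',0)$ with $\pi_0(X')\otimes R=A$ together with a lift $X\to X'$, $Y\to Y'$ of the given map (using that $A$ is a quotient of some $S$, and enlarging via (a)); and all the triples $(X_L,Y_L,0)$, $(X'_L,\dots)$ with the Galois arrows over a common splitting field $L$. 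With (a), (b), (c) in hand, Proposition \ref{prop:criterion-equi} gives that $U=$ \eqref{MN0-to-M0} is an equivalence. \hfill$\square$
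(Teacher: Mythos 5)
Your overall strategy --- apply Proposition \ref{prop:criterion-equi} to the representation $S$ of Lemma \ref{h-M0}, with (a) handled by disjoint unions and (b) by writing every $R$-constructible sheaf as a quotient of a finite sum of sheaves $V\otimes R$ --- is exactly the paper's, which likewise isolates (c) as the only point needing an argument. The gap is in your verification of (c), and it is a real one. First, an arrow of the diagram does not give an element of $\End(H_*|_E)$; it imposes a commutation \emph{constraint} on such elements. So the most your Galois arrows on $(X_L,Y_L,0)$ can deliver is that the image of $\End(H_*|_E)$ in $\End(H_0(X,Y;R))$ lands in the \emph{commutant} of the Galois action, whereas the step ``sub-$R[\Gal]$-module, therefore sub-$\End(H_*|_E)$-module'' needs the opposite containment: that this image is contained in the image of the group algebra. ``Surjects onto the group algebra'' is the wrong direction. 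The commutant is in general strictly larger than the group algebra and need not preserve the kernel of a Galois-equivariant map. A minimal example: $X=\Spec(k)\sqcup\Spec(k)$, $Y=\emptyset$, $A=R$ the constant sheaf, $\alpha$ the difference map $R^2\to R$. Here $L=k$, all your Galois arrows are identities, the component of $\End(H_*|_E)$ at $(X,Y,0)$ is essentially all of $\End(R^2)$, and the diagonal $\ker(\iota^*\alpha)$ is not stable. Second, the ``lift $X\to X'$ of the given map'' you invoke need not exist: a morphism of sheaves $S(X,Y,0)\to A$ (such as the difference map above) is in general not induced by any morphism of schemes, so that part of your $E$ cannot be built.

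The missing ingredient is the one the paper uses: adjoin to $E$ the object $(\Spec(l),\emptyset,0)$, for $l/k$ a large enough finite Galois extension, and take the \emph{full} subdiagram on $(\Spec(l),\emptyset,0)$ and $(X,Y,0)$, so that $E$ contains all arrows $\Spec(l)\to X$ given by $l$-points. The component of an endomorphism at $(\Spec(l),\emptyset,0)$ must commute with the regular representation of $\Gal(l/k)$ on $H_0(\Spec(l);R)$ and hence lies in $R[\Gal(l/k)]$; the point-arrows, whose images span $H_0(X,Y;R)$ once $l$ is large enough, then force the component at $(X,Y,0)$ to act through the corresponding group-algebra element. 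This is what gives $\End(H_*|_E)=R[\Gal(l/k)]$, and only then does Galois-equivariance of $\iota^*(\alpha)$ yield condition (c).
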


\begin{proof}
We will use Proposition
\ref{prop:criterion-equi}.
Only the condition \ref{prop:criterion-equi}(c) needs to be discussed.

So let $\cF$ be an $R$-constructible 
\'etale sheaf on $Et_k$ and let 
$$\alpha:\frac{\pi_0(X)\otimes R}{\pi_0(Y)\otimes R}\to
\cF$$
be a morphism.
Let $l/k$ be a finite Galois extension 
which trivializes $\pi_0(X)$, $\pi_0(Y)$ and $\cF$.
Let $E$ be the full sudiagram 
whose objects are $(\Spec(l),\emptyset,0)$
and $(X,Y,0)$.
Then it is easy to see that 
$\End(T|_E)=R[{\rm Gal}(l/k)]$, the group algebra of the Galois group.
The map $\iota^*(\alpha)$ being ${\rm Gal}(l/k)$-equivariant, it 
follows that $\ker(\iota^*(\alpha))$ is a sub-$\End(T|_E)$-module. 
(A similar argument appears in \cite[\S 6.1]{NN}.)
\end{proof}

\section{Deligne $1$-motives as a universal category} 
In this section, we concentrate on the category 
$\M''_1$ which we will define below.  
Our main goal is to identify it with the abelian
category of $1$-motives with 
torsion ${}^t\cM_1$ (see \cite[\S1]{BRS} and \cite[Appendix C]{BK}).

\subsection{A smaller diagram of curves}
Let
${}^D(Crv_k)$
be the full sub-diagram of ${}^D(Sch_k)$ consisting of triples
$(C,Z,1)$ where $C$ is a smooth affine curve and 
$Z\subset C$ a closed subset consisting of finitely many closed 
points.

By restriction, we get a representation 
$H_*:{}^D(Crv_k)
\to \Rmod$. We set 
$$\M_1''^R=\mathcal{C}\left(H_*:{}^D(Crv_k)
\to \Rmod\right).$$
(As usual, when $R=\Z$, we simply write
$\M''_1$. If we want to stress the dependence on the field $k$, we 
write $\M_1''^R(k)$ and $\M_1''(k)$.)
The image in 
$\M_1''^R$ of an object $(C,Z,1)\in {}^D(Crv_k)$ 
is denoted by $\widetilde{H}''_1(C,Z;R)$.
By universality, we get  
faithful and exact functors 
\begin{equation}
\label{compare-M0s}
\M_1''^R \to \M_1'^R \to \M_1^R.
\end{equation}

\subsection{\/} Let ${}^t\cM_1={}^t\cM_1(k)$
be the abelian category of $1$-motives
with torsion \cite[\S1]{BRS}.
Recall that a $1$-motive with torsion is a complex of 
commutative group schemes
$[\cF\to \cG]$ where 
$\cG$ is a semi-abelian variety and
$\cF$ is a lattice with torsion, \ie 
a $\Z$-constructible \'etale sheaf on $Et_k$ (as in 
\ref{0-motive-categ}) considered as a group 
scheme in the obvious way.

Given an \'etale $k$-scheme $V$, we denote 
$\Z_{tr}(V)$ the group scheme 
such that, for every $k$-scheme $X$, 
$\Z_{tr}(V)(X)$ is the free abelian group on the set
of connected components of $X\times_kV$.
This corresponds to the $\Z$-constructible 
\'etale sheaf $V\otimes \Z$ used in the proof of Lemma 
\ref{h-M0}.

\subsection{\/}
We have a representation 
$$A:{}^D(Crv_k)\to
{}^t\cM_1$$
given by 
$$(C,Z, 1)\leadsto A(C,Z)\df[{\rm Div}^0_Z(C) \to \Alb^0(C)]$$
where ${\rm Div}^0_Z(C)$ is the lattice $\ker\{\Z_{tr}(Z)
\to \Z_{tr}(\pi_0(C))\}$ and $\Alb^0(C)$ is the connected component of the identity of the Serre-Albanese scheme $\Alb (C)$ of $C$. 
(Note that $A(C,Z) = \Alb^{-}(C, Z)$ with the notation adopted in \cite{BS}
and $A(C,Z) \cong\LA{1} (C,Z)$ according to \cite{BK}.)
On the other hand, we have a functor 
\begin{equation}
\label{Betti1}
T_{\Z}:{}^t\cM_1\to \Zmod
\end{equation}
given as follows:  
$$T_{\Z}([\cF\to \cG])=\cF(\C)\times_{\cG(\C)} {\rm Lie}\,\cG(\C)$$
This is a forgetful functor. Moreover, we have:

\begin{lemma}
\label{HsimAcircB}
There is a canonical isomorphism
$H_*\simeq T_{\Z}\circ A$ between representations of
${}^D(Crv_k)$.
\end{lemma}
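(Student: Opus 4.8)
The plan is to exhibit, for each object $(C,Z,1)\in {}^D(Crv_k)$, a natural isomorphism $H_1(C^{\an},Z^{\an};\Z)\simeq T_{\Z}(A(C,Z))$ and then check compatibility with the two types of arrows (morphisms of pairs and boundary maps). First I would unwind the target: since $A(C,Z)=[\Div^0_Z(C)\to \Alb^0(C)]$, its Betti realization is by definition the fiber product $\Div^0_Z(C)(\C)\times_{\Alb^0(C)(\C)}\Lie\,\Alb^0(C)(\C)$, i.e. the set of pairs (a divisor of degree zero on each component, supported on $Z$; a path/lift in the Lie algebra of the generalized Jacobian mapping to its Abel--Jacobi image). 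Concretely, writing $C$ as the complement of a finite set $S_\infty$ in a smooth projective curve $\bar C$ with generalized Jacobian $\Alb^0(C)$ an extension of $\operatorname{Jac}(\bar C)$ by a torus indexed by $S_\infty$, this fiber product is canonically the group of $1$-chains on $\bar C$ with boundary in $Z\cup S_\infty$ modulo $1$-boundaries — which is exactly the relative homology $H_1(\bar C, Z\cup S_\infty;\Z)$, and excision identifies that with $H_1(C, Z;\Z)$.

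Concretely I would organize this via the classical description of the generalized Albanese: there is an exact sequence relating $H_1(C^{\an};\Z)$, $H_1(Z^{\an};\Z)\to H_0$, and the torus/abelian parts, matching the complex-realization exact sequence attached to the $1$-motive $[\Div^0_Z(C)\to \Alb^0(C)]$ (weight filtration: the torus part $\Div^0_Z(C)\otimes\Z$ on one side, $\Lie$ of the abelian part as the middle, cokernel the character lattice). This is essentially the content of the comparison in \cite{BS}/\cite{BK} that $A(C,Z)\cong\LA{1}(C,Z)$ together with the known Betti realization of $\LAlb$; I would invoke those to get the isomorphism $H_*(C,Z)\simeq T_{\Z}(A(C,Z))$ objectwise, making the identification canonical by building it out of the Abel--Jacobi map and the exponential sequence rather than by an ad hoc choice.

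Then I would verify functoriality. For an arrow of type (a), a morphism $f:(C,Z,1)\to(C',Z',1)$ with $f(Z)\subset Z'$ (and $f$ a morphism of smooth affine curves), both $H_*(f)=f_*$ and $A(f)$ are induced by pushforward of divisors / functoriality of $\Alb$, and $T_{\Z}$ applied to $A(f)$ is pushforward of chains; these agree because the Abel--Jacobi map is natural in the curve. For an arrow of type (b), $\delta:(C,Z,1)\to(Z,W,0)$, one side is the boundary map $\partial$ in the long exact sequence of the triple $(C^{\an},Z^{\an},W^{\an})$, and the other side is the map on $T_{\Z}$ induced by the structural map of the $1$-motive $A(C,Z)$ sending a point of the lattice $\Div^0_Z(C)$ (supported on $Z$) to its class — i.e. the map $\Div^0_Z(C)(\C)\to T_\Z(A(Z,W))$; under the chain-level identification this is literally "take the boundary $0$-cycle", so the square commutes. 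In both cases it suffices, by faithfulness, to check after the forgetful functor to $\Zmod$, so the verification is a finite homological bookkeeping.

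The main obstacle, and where I would spend the real effort, is making the objectwise isomorphism genuinely \emph{canonical} and compatible with the torsion phenomena: $\Div^0_Z(C)$ is a lattice \emph{with torsion} when $\pi_0(C)$ has several components or $Z$ meets components in subtle ways, $\Alb^0(C)$ is only the connected component, and the fiber product defining $T_{\Z}$ must be shown to reproduce integral relative homology on the nose (no division, correct treatment of components and of the points at infinity $S_\infty$). Pinning down the exact sequence $0\to \Div^0_Z(C)(\C)\to T_\Z(A(C,Z))\to \Lie\,\Alb^0(C)(\C)/(\text{period lattice of }H_1(C;\Z))\to 0$ and matching it term-by-term with the weight/excision filtration on $H_1(C^{\an},Z^{\an};\Z)$ — being careful that the semiabelian, not just abelian, generalized Jacobian appears because $C$ is affine — is the crux; once that identification is fixed by the Abel--Jacobi construction, compatibility with arrows is formal.
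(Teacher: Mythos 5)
Your overall route is the same as the paper's: the paper simply cites \cite[Proposition 3.1.2 \& \S 5.3]{BS} for the classical comparison and then recalls the isomorphism explicitly, for $k=\C$ and $C$ connected, by sending a path $\gamma$ with endpoints in $Z$ to $\tilde\gamma(1)-\tilde\gamma(0)\in V=\Lie\Alb^0(C)$, where $\tilde\gamma$ is a lift of the Abel--Jacobi image of $\gamma$ to the universal cover of $\Alb^1(C)^{\an}$ (a $V$-torsor). Your plan to build the map from the Abel--Jacobi embedding and to check the two types of arrows afterwards is therefore consistent with what the paper does, and the reduction of functoriality to a check after the forgetful functor is fine.

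However, the two places where you try to make the objectwise identification concrete are wrong. First, the fiber product $T_{\Z}(A(C,Z))$ is \emph{not} $H_1(\bar C, Z\cup S_\infty;\Z)$, and excision does not identify the latter with $H_1(C,Z;\Z)$: excision requires the excised set to lie in the interior of the subspace, which fails for isolated points of a curve, and indeed the ranks disagree ($2g+\#Z+\#S_\infty-1$ versus $2g+\#Z+\#S_\infty-2$ for $C$ connected of genus $g$). Second, your exact sequence has the extension backwards: projecting the fiber product $\Div^0_Z(C)(\C)\times_{\Alb^0(C)(\C)}\Lie\Alb^0(C)(\C)$ to the lattice factor is surjective with kernel $\ker\{\Lie\Alb^0(C)(\C)\to\Alb^0(C)(\C)\}=H_1(\Alb^0(C)^{\an};\Z)\cong H_1(C^{\an};\Z)$, so the correct sequence is
$$0\to H_1(C^{\an};\Z)\to T_{\Z}(A(C,Z))\to \Div^0_Z(C)(\C)\to 0,$$
with the lattice $\Div^0_Z(C)$ as the weight-$0$ \emph{quotient} (matching $H_1(C,Z)\to\ker\{H_0(Z)\to H_0(C)\}$), not as a sub-object, and certainly not surjecting onto the divisible group $\Lie\Alb^0(C)(\C)/(\text{periods})=\Alb^0(C)(\C)$. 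Calling $\Div^0_Z(C)$ ``the torus part'' is a related confusion: it is the lattice part of the $1$-motive, i.e.\ the top weight-graded piece of the realization. With these corrections the term-by-term matching you propose does go through (and is exactly what the cited result of \cite{BS} provides), but as written the ``crux'' step of your argument would fail.
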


\begin{proof}
This is classical: see \cite[Proposition 3.1.2 \& \S 5.3]{BS}. 
For the sake of completeness, we recall the description of the isomorphism for $(C,Z,1)\in {}^D(Crv_k)$ 
assuming $k=\C$, $Z$ non empty, and
$C$ connected and not isomorphic to the affine line. 
The last assumption is to insure that there is an 
embedding $C\hookrightarrow \Alb^1(C)$ where 
$\Alb^1(C)\subset \Alb(C)$ is the connected component consisting of $0$-cycles of degree $1$. The group $H_1(C,Z;\Z)$ is generated by 
classes of paths $\gamma:[0,1]\to C^{\an}$ such that $\gamma(0), \, \gamma(1)\in Z$.
Let $\gamma^1:[0,1]\to \Alb^1(C)^{\an}$ be the composition of $\gamma$ 
with 
the canonical embedding. Fix a universal cover 
$U\to \Alb^1(C)^{\an}$; then $U$ is naturally a torsor over the 
vector space 
$V=\Lie \Alb^0(C)$. If $\tilde{\gamma}:[0,1]\to U$ is any lift of 
$\gamma^1$, then $\tilde{\gamma}(1)-\tilde{\gamma}(0)\in V$ is
well-defined. Moreover, it is an element of 
$Z\times_{\Alb^0(C)^{\rm an}}V\subset T_{\Z}(A(C,Z))$.
\end{proof}

\subsection{\/}
By Lemma \ref{HsimAcircB} and universality
(\ie \cite[Theorem 41]{LV})
there is a canonical, faithful and
exact functor 
\begin{equation}
\label{main-fonct-M1-cM1}
\M_1''\to {}^t\cM_1.
\end{equation}
We can now state one of the key results of this paper.

\begin{thm}
\label{thm:main-thm}
The functor 
\eqref{main-fonct-M1-cM1}
is an equivalence of categories.
\end{thm}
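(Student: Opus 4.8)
The plan is to apply the equivalence criterion of Proposition~\ref{refine-criterion} to the representation $H_*:{}^D(Crv_k)\to\Zmod$, the abelian category $\cE={}^t\cM_1$, the representation $S=A$, and the forgetful functor $G=T_\Z$. Since $T_\Z(p)=H_i(C^{\an},Z^{\an};\Z)$ is torsion-free for every object $(C,Z,1)$ of ${}^D(Crv_k)$ (being the relative $H_1$ of an affine curve modulo a finite set of points), the torsion-freeness hypothesis of the proposition is automatic. Thus it suffices to verify conditions \ref{prop:criterion-equi}(a), \ref{prop:criterion-equi}(b), and the rational variant \ref{refine-criterion}(c$'$). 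Condition (a) is easy: given $(C,Z,1)$ and $(C',Z',1)$, the disjoint union $C\sqcup C'$ is again a smooth affine curve, $Z\sqcup Z'$ a finite set of closed points, and the canonical inclusions induce the required splitting $H_1(C,Z)\oplus H_1(C',Z')\iso H_1(C\sqcup C',Z\sqcup Z')$.

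\textbf{Generation (condition (b)).} The main work of this step is to show that every $1$-motive with torsion $[\cF\to\cG]$ is a quotient, in ${}^t\cM_1$, of some $A(C,Z)=[\Div^0_Z(C)\to\Alb^0(C)]$. First I would treat the semi-abelian part: any semi-abelian variety $\cG$ is a quotient of $\Alb^0(C)$ for a suitable smooth affine curve $C$ — indeed $\Alb^0$ of a generic smooth affine curve linking enough points and tori surjects onto any given extension of an abelian variety by a torus (this is the standard Abel--Jacobi / Gabber-type presentation, as used in \cite{BS}, \cite{BK}). Then, having fixed such a $C$ surjecting onto $\cG$, one enlarges $Z$ so that the lattice $\Div^0_Z(C)$ surjects onto $\cF$ compatibly — the torsion in $\cF$ is handled because $\cF$ is a $\Z$-constructible sheaf, hence already a quotient of a free such sheaf of the form $\Div^0_Z$ after passing to a finite \'etale cover realized by adding Galois-conjugate points to $Z$ (compare the argument in Proposition~\ref{N0}). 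A diagram chase then produces the surjection $A(C,Z)\twoheadrightarrow[\cF\to\cG]$ in ${}^t\cM_1$; note one must check the map is an epimorphism in the \emph{abelian} category ${}^t\cM_1$, which amounts to surjectivity on $\cG$ together with the cokernel lattice being the cokernel of $\cF$ — this is where the precise definition of morphisms and (effective) epimorphisms of $1$-motives with torsion from \cite[\S1]{BRS} is used.

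\textbf{Condition (c$'$) and the main obstacle.} After tensoring with $\Q$, condition (c$'$) asks that for every morphism $A(C,Z)\to M$ in ${}^t\cM_1\otimes\Q$, the kernel of $H_1(C,Z;\Q)\to T_\Q(M)$ is stable under the full endomorphism algebra $\End(H_{*,\Q})$ of the restricted representation. The point is that over $\Q$ one may replace ${}^t\cM_1$ by the isogeny category of Deligne $1$-motives, which is Tannakian with a well-understood structure; concretely, a map $A(C,Z)\to M$ in the isogeny category is, up to a multiple, an actual morphism of $1$-motives, and such a morphism is determined by its realization together with the Hodge/weight structure. The kernel of the realization map is then a sub-Hodge structure, and — this is the crux — one shows it is cut out by endomorphisms already present in $\End(H_{*,\Q}|_E)$ for a finite sub-diagram $E$, using that $\End(H_{*,\Q})$ acts through the Mumford--Tate group and that projectors onto the relevant weight-graded pieces and onto isotypic components of $H_1$ of curves are realized by correspondences between curves (hence live in the diagram ${}^D(Crv_k)$). \textbf{I expect condition (c$'$) to be the main obstacle}: it requires knowing that enough projectors in $\End(H_{1,\Q}(C,Z))$ come from geometric correspondences among curves and finite sets of points, which is essentially the statement that the diagram of curves already ``sees'' all Hodge-theoretic sub-objects of $1$-motives — the heart of why curves suffice. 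Once (a), (b), (c$'$) are in place, Proposition~\ref{refine-criterion} immediately yields that the functor \eqref{main-fonct-M1-cM1} is an equivalence.
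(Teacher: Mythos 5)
Your framework is exactly the paper's: it reduces Theorem \ref{thm:main-thm} to Proposition \ref{refine-criterion} applied with $\cE={}^t\cM_1$, $S=A$, $G=T_{\Z}$, and your treatment of conditions \ref{prop:criterion-equi}(a) (disjoint unions of curves) and \ref{prop:criterion-equi}(b) (every $1$-motive with torsion is a quotient of some $A(C,Z)$, built by a curve through generators plus enough marked points) is sound and matches what the paper uses. The problem is condition (c$'$), which you correctly identify as the crux: this is the paper's Proposition \ref{key-propose}, whose proof occupies Lemmas \ref{lemma-key-prop-1.5}--\ref{lemma-key-prop-7}, and your proposed mechanism for it has a genuine gap. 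You suggest showing that ``projectors onto the relevant weight-graded pieces and onto isotypic components of $H_1$ of curves are realized by correspondences between curves, hence live in the diagram.'' Even granting algebraicity of such projectors, this does not give (c$'$): the algebra $\cR_1=\End(H_{*,\Q})$ is noncommutative, and the image or kernel of a non-central idempotent in the image of $\cR_1\to\End(H_1(C,Z;\Q))$ is not thereby an $\cR_1$-submodule. Stability under $\cR_1$ is obtained by exhibiting the subspace as a kernel/image/preimage of maps that are $\cR_1$-\emph{linear} because they are induced by zigzags in ${}^D(Crv_k)$ --- i.e.\ one must geometrize the \emph{morphism} $\alpha:A(C,Z)\to M$, not a projector onto its kernel. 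Moreover, invoking the Mumford--Tate group to assert that the relevant Hodge-theoretic sub-objects are already visible in the diagram comes close to assuming the conclusion.

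What the paper actually does, and what your sketch is missing, is a purely geometric d\'evissage: reduce to $M=[\cF\to\Alb^0(D)]$ with $D$ a smooth affine curve (Lemma \ref{lemma-key-prop-6}, via duality and drawing a curve in the dual abelian variety); then show the semi-abelian component $\alpha_1:\Alb^0(C)\to\Alb^0(D)$ may be assumed of the form $\sum a_i\Alb^0(f_i)$ for genuine morphisms $f_i:C\to D$ (Lemma \ref{lemma-key-prop-7}, using $h_0(D)(C_Z)\simeq\Alb(D)(C_Z)$ from \cite[Proposition 11.1]{MVW} to produce a finite correspondence, then a finite cover and Lemma \ref{lemma-key-prop-2} to split it into a sum of maps); handle the lattice discrepancy through the $0$-motive case, which is done by explicit zigzags through objects $(\Aff^1_l,\{0,1\},1)$ and Galois averaging (Lemmas \ref{lemma-key-prop-4} and \ref{lemma-key-prop-5}); and finally assemble $\ker(\alpha)=\cI+\gamma^{-1}(\cT)$ from pieces each of which is $\cR_1$-stable for one of the above reasons. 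Note also that correspondences between curves are \emph{not} arrows of ${}^D(Crv_k)$; making them act $\cR_1$-linearly requires the cover-and-average trick after enlarging $Z$, a point your sketch elides. So the proposal is a correct outline of the reduction but leaves the substantive step unproved, with a suggested route that would not succeed as stated.
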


\subsection{\/}

From now on, we only use the coefficients rings 
$R=\Z$ or $R=\Q$.
As a particular case of 
\eqref{chg-R-from-Z-to-Q}, we have 
$\M''^{\,\Q}_n=\M''_n\otimes \Q$.
We denote $\mathcal{M}_1^{\Q}={}^t\mathcal{M}_1\otimes \Q$,
the abelian category of $1$-motives
where isogenies are inverted; we have 
a forgetful functor 
$T_{\Q}:\mathcal{M}_1^{\Q}\to \Qmod$ 
induced from
\eqref{Betti1}.

Let $\cR_1$ be the pro-finite dimensional $\Q$-algebra of 
endomorphisms of 
the representation $H_*:{}^D(Crv_k) \to \Qmod$
(see \ref{subsec-artinian}).

\begin{propose}
\label{key-propose}
Let $(C,Z,1)$ be in ${}^D(Crv_k)$ and 
$\alpha:A(C,Z) \to M$ a morphism in 
$\mathcal{M}_1^{\Q}$. Then, the kernel of 
$$H_1(C,Z;\Q)\simeq T_{\Q}\circ A(C,Z) 
\longby{T_{\Q}(\alpha)}T_{\Q}(M)$$
is a sub-$\cR_1$-module of 
$\tilde{H}''_1(C,Z;\Q)$.
\end{propose}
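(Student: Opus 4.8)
The plan is to reduce the statement to a concrete, computable claim about $1$-motives and then verify it using the structure theory of the category $\mathcal{M}_1^{\Q}$. The point of Proposition~\ref{key-propose} is to establish condition \ref{refine-criterion}(c$'$) for the representation $A:{}^D(Crv_k)\to {}^t\cM_1$ (together with the forgetful functor $T_{\Z}$), so that Proposition~\ref{refine-criterion} applies and yields Theorem~\ref{thm:main-thm}. So the real content is: the kernel of $T_{\Q}(\alpha):T_{\Q}(A(C,Z))\to T_{\Q}(M)$ must be stable under the action of $\cR_1=\End(H_*|_{{}^D(Crv_k)})\otimes\Q$.

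First I would note that, replacing $M$ by the image of $\alpha$, we may assume $\alpha$ is an epimorphism in $\mathcal{M}_1^{\Q}$, and then that it suffices to realize $M$ itself — up to isogeny — as a subquotient of $A(C',Z')$ for curves already in the diagram, or more precisely to produce enough endomorphisms. The key structural input is that every $1$-motive $M$ (up to isogeny) is a sub-$1$-motive, or receives a surjection from, a $1$-motive of the form $A(C,Z)$ for a suitable smooth affine curve $C$ with finite $Z\subset C$: the lattice part is handled by choosing $Z$ large enough, the toric part of $\cG$ is built from $\Z_{tr}$ of points, and the abelian part $\Alb^0(C)$ dominates any abelian variety once $C$ has large enough genus (this is the classical fact that any abelian variety is a quotient of a Jacobian). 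So the diagram ${}^D(Crv_k)$ is ``rich enough'' that $A$ is essentially surjective up to isogeny onto ${}^t\cM_1\otimes\Q$. This is exactly the kind of statement one needs: it means endomorphisms of $A(C,Z)$, for $C,Z$ ranging over the diagram, generate (a dense subalgebra of) the endomorphism ring acting on any fixed $T_{\Q}(A(C,Z))$.

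The main step is then the following: given $\alpha:A(C,Z)\to M$, I want to exhibit, for $\xi$ in the kernel of $T_{\Q}(\alpha)$ and $\phi\in\cR_1$, that $\phi(\xi)$ is again in the kernel. Since $\phi$ is a limit of endomorphisms of $H_*$ restricted to finite subdiagrams, and the kernel is a finite-dimensional $\Q$-vector space, it is enough to treat $\phi$ coming from an endomorphism of $H_*|_E$ for a finite $E\subset {}^D(Crv_k)$ that I am free to enlarge. I would enlarge $E$ to contain a curve $(C',Z')$ together with a morphism (in the diagram) realizing a surjection $A(C',Z')\twoheadrightarrow A(C,Z)$, or rather realizing $M$ up to isogeny as $A(C',Z')$ quotiented appropriately; then the morphism $\alpha$ becomes, at the level of $H_*$, a map induced by genuine morphisms of curves (composed with boundary maps $\delta$), hence commutes with the $\End(H_*|_E)$-action by naturality. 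Concretely: because $A$ is a functor from the diagram and $T_{\Z}\circ A\simeq H_*$ (Lemma~\ref{HsimAcircB}), any morphism in ${}^D(Crv_k)$ that $A$ sends to a map with the right image lets us identify $T_{\Q}(\alpha)$ with a $\Q$-linear combination of $H_*$ of diagram arrows, and the kernel of such a map is automatically a sub-$\End(H_*|_E)$-module for $E$ large enough to contain those arrows.

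The hard part will be the realization step: showing that an \emph{arbitrary} morphism $\alpha:A(C,Z)\to M$ in $\mathcal{M}_1^{\Q}$ — not assumed to come from a morphism of curves — can be ``resolved'' by diagram data, i.e. fitted into a commutative square with morphisms induced by actual maps of smooth affine curves and their boundary operators, at least after passing to isogeny and enlarging. One has to handle the three weight-graded pieces (lattice, torus, abelian variety) of $M$ separately and control the morphism on each: for the abelian part this uses that $\Hom(\Alb^0(C), B)\otimes\Q$ is spanned by (differences of) Albanese maps attached to maps of curves into $B$, and dually for the torus and lattice parts one uses maps to/from $\G_m$'s and $\Z_{tr}$ of points, all of which are representable inside the curve diagram via divisors on affine curves. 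Marshalling these into a single finite subdiagram $E$ so that $T_{\Q}(\alpha)$ literally becomes $H_*$ of an $E$-morphism (up to the $\Q$-span), and hence has $\End(H_*|_E)$-stable kernel, is the technical heart; everything else is formal consequence of Lemma~\ref{HsimAcircB} and the definition of $\cR_1$ as the inverse limit of the $\End(H_*|_E)$.
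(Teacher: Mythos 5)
Your high-level strategy is the same as the paper's --- geometrize $\alpha$ by diagram data so that stability of the kernel under $\cR_1$ becomes automatic --- but the plan has a genuine gap exactly at the point you flag as the ``technical heart'', and the gap is not merely that the step is unexecuted: the endpoint you aim for is unattainable. Even after reducing to $M=[\mathcal{F}\to\Alb^0(D)]$ with $D$ a smooth affine curve and arranging (as in Lemma \ref{lemma-key-prop-7}, itself a nontrivial step requiring the comparison $h_0(D)(C_Z)\simeq\Alb(D)(C_Z)$ on the semilocal ring of $C$ at $Z$, plus a passage to a finite cover to split a correspondence into honest maps $f_i:C\to D$) that the semi-abelian component $\alpha_1$ equals $\sum_i a_i\Alb^0(f_i)$, the morphism $\alpha$ itself is \emph{not} a $\Q$-linear combination of $H_*$ of diagram arrows. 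Composing with the monomorphism $\delta:M\to A(D,T)$ to land back in the diagram, the discrepancy $\epsilon=\gamma-\delta\circ\alpha$ (where $\gamma=\sum_i a_iA(f_i)$ is the geometric map) is a nonzero morphism of $1$-motives supported on lattices, and $\ker(\alpha)$ is strictly larger than $\ker(\gamma)$ in general. The paper resolves this by the decomposition $\ker(\alpha)=\mathcal{I}+\gamma^{-1}(\mathcal{T})$ with $\mathcal{I}$ and $\mathcal{T}$ lattices, and the $\cR_1$-stability of these pieces is \emph{not} automatic: it is the content of Lemma \ref{lemma-key-prop-5}, which shows that the image of $T_{\Q}\mathcal{L}$ under a morphism $[\mathcal{L}\to 0]\to A(C,Z)$ is a sub-$\cR_1$-module of $H_1(C,Z;\Q)$, and whose proof requires a finite correspondence $\Aff^1_l\to C$, a Galois cover splitting it into maps, an enlargement of $Z$, and an invariants argument. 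Nothing in your sketch produces this.

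Two secondary points. First, your appeal to ``$\Hom(\Alb^0(C),B)\otimes\Q$ being spanned by Albanese maps of maps of curves into $B$'' is not the statement actually needed: one must produce a map or correspondence defined on (a neighborhood of $Z$ in) $C$ inducing a \emph{given} homomorphism $\Alb^0(C)\to\Alb^0(D)$, which is where the Suslin-homology comparison enters, together with the reduction permitting one to shrink $C$ (Lemma \ref{lemma-key-prop-2}). Second, even the base case where $M$ is a $0$-motive (Lemma \ref{lemma-key-prop-4}) requires an explicit zigzag through affine lines in ${}^D(Crv_k)$ to realize the evaluation maps $\Div^0_Z(C)\to\Z_{tr}(l)$ geometrically; your sketch does not supply it. In short: right strategy and correct identification of where the difficulty lies, but the two lemmas that carry the actual content are missing, and the claim that the kernel becomes ``automatically'' stable once the semi-abelian part is geometrized is false.
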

\begin{proof} The proof is contained in \ref{proof-key-prop} below and it relies on Lemmas \ref{lemma-key-prop-1.5}, \ref{lemma-key-prop-2}, 
\ref{lemma-key-prop-4}, \ref{lemma-key-prop-5}, \ref{lemma-key-prop-6} and \ref{lemma-key-prop-7} below. 
\end{proof}

\subsection{\/} 
Once Proposition \ref{key-propose} is proven, 
Theorem \ref{thm:main-thm} will follow from 
Proposition \ref{refine-criterion}.
We break the proof of Proposition \ref{key-propose} into small steps. 
In what follows, the kernel in the statement of Proposition \ref{key-propose}
will be denoted 
$K(C,Z,\alpha)$; it is a subspace of $H_1(C,Z;\Q)$.

\begin{lemma}
\label{lemma-key-prop-1.5}
Assume that 
$M=N\oplus N'$ (in $\mathcal{M}_1^{\Q}$) and let $\beta$ and 
$\beta'$ be the corresponding components of 
$\alpha$. If 
\emph{Proposition \ref{key-propose}}
holds for $\beta$ and 
$\beta'$ then it holds for $\alpha$.
\end{lemma}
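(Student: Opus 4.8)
The statement is a straightforward reduction: it says that Proposition~\ref{key-propose} is compatible with finite direct sum decompositions of the target. The plan is to write out the kernel $K(C,Z,\alpha)$ as an intersection of the kernels attached to the components $\beta$ and $\beta'$, and then invoke the hypothesis together with the elementary fact that an intersection of two sub-$\cR_1$-modules of $\widetilde H''_1(C,Z;\Q)$ is again a sub-$\cR_1$-module.

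\textbf{Key steps.} First I would observe that, since $M = N\oplus N'$ in $\mathcal{M}_1^\Q$ and the functor $T_\Q$ is additive, we have a canonical identification $T_\Q(M) = T_\Q(N)\oplus T_\Q(N')$, under which $T_\Q(\alpha) = (T_\Q(\beta), T_\Q(\beta'))$ — i.e.\ $T_\Q(\alpha)(x) = \big(T_\Q(\beta)(x),\, T_\Q(\beta')(x)\big)$ for $x\in T_\Q\circ A(C,Z)$. Hence
\[
K(C,Z,\alpha)=\ker T_\Q(\alpha)=\ker T_\Q(\beta)\cap \ker T_\Q(\beta')=K(C,Z,\beta)\cap K(C,Z,\beta').
\]
Second, by the hypothesis that Proposition~\ref{key-propose} holds for $\beta$ and for $\beta'$, both $K(C,Z,\beta)$ and $K(C,Z,\beta')$ are sub-$\cR_1$-modules of $\widetilde H''_1(C,Z;\Q)$. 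Third, I would note that $\cR_1$ acts $\Q$-linearly on $H_1(C,Z;\Q)\simeq \widetilde H''_1(C,Z;\Q)$, so the intersection of two $\cR_1$-stable $\Q$-subspaces is $\cR_1$-stable; therefore $K(C,Z,\alpha)$ is a sub-$\cR_1$-module, which is exactly the assertion of Proposition~\ref{key-propose} for $\alpha$.

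\textbf{Main obstacle.} There is essentially no obstacle here; this lemma is a formal bookkeeping step whose only content is the additivity of $T_\Q$ and the trivial stability of $\cR_1$-submodules under intersection. The one point deserving a word of care is checking that the decomposition of $\alpha$ into $(\beta,\beta')$ is indeed compatible with the identification $H_1(C,Z;\Q)\simeq T_\Q\circ A(C,Z)$ of Lemma~\ref{HsimAcircB} — but this is immediate since that identification is a fixed isomorphism not involving $M$ at all. The real work of Proposition~\ref{key-propose} is deferred to the later lemmas \ref{lemma-key-prop-2}–\ref{lemma-key-prop-7}, where one must handle the indecomposable (or at least irreducible) pieces; this lemma simply allows one to assume $M$ has no nontrivial direct sum decomposition of the relevant kind in those later arguments.
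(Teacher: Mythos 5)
Your proof is correct and is exactly the paper's argument: the paper's entire proof is the identity $K(C,Z,\alpha)=K(C,Z,\beta)\cap K(C,Z,\beta')$, which you state and justify via the additivity of $T_\Q$ and the stability of $\cR_1$-submodules under intersection.
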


\begin{proof}
Indeed, we have
$K(C,Z,\alpha)=
K(C,Z,\beta)\cap K(C,Z,\beta')$.
\end{proof}

\begin{lemma}
\label{lemma-key-prop-2}
Let $f:(D,T,1) \to (C,Z,1)$ be an arrow in ${}^D(Crv_k)$
such that $f:D \to C$ is dominant and the image of 
the induced morphism 
$$f_*:{\rm Div}_T^0(D) \to {\rm Div}_Z^0(C)$$
has a finite index. 
Then 
\emph{Proposition \ref{key-propose}}
for $\alpha:A(C,Z)\to M$ follows from the case of 
$\alpha \circ A(f):A(D,T) \to M$.
\end{lemma}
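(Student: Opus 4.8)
The plan is to reduce Proposition \ref{key-propose} for the morphism $\alpha:A(C,Z)\to M$ to the a priori easier case of its precomposition with $A(f):A(D,T)\to A(C,Z)$, exploiting that this precomposition is faithfully tracked on Betti realizations and that $\cR_1$-linearity is detectable there. Concretely, I would first observe that the arrow $f:(D,T,1)\to(C,Z,1)$ induces, via the universal representation $\tilde{H}''_*$, a morphism $\tilde{H}''_1(f):\tilde{H}''_1(D,T;\Q)\to\tilde{H}''_1(C,Z;\Q)$ in $\M_1''^{\,\Q}$, which in particular is a morphism of $\cR_1$-modules since $\cR_1$ acts on everything in the image of $\tilde{H}''_*$ compatibly with all arrows of the diagram. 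Under the identification $H_*\simeq T_\Q\circ A$ of Lemma \ref{HsimAcircB}, the Betti realization of $A(f)$ is exactly $H_1(f):H_1(D,T;\Q)\to H_1(C,Z;\Q)$, and the compatibility of that isomorphism with arrows gives a commutative square relating $T_\Q(\alpha)\circ T_\Q(A(f))$ and $T_\Q(\alpha\circ A(f))$. Hence $K(D,T,\alpha\circ A(f))=H_1(f)^{-1}\bigl(K(C,Z,\alpha)\bigr)$ inside $H_1(D,T;\Q)$.

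The key point is then that the hypothesis --- $f:D\to C$ dominant and $f_*:\mathrm{Div}^0_T(D)\to\mathrm{Div}^0_Z(C)$ of finite index --- forces $H_1(f):H_1(D,T;\Q)\to H_1(C,Z;\Q)$ to be \emph{surjective}. One sees this from the structure of $A(C,Z)=[\mathrm{Div}^0_Z(C)\to\Alb^0(C)]$: the dominance of $f$ makes $\Alb^0(D)\to\Alb^0(C)$ surjective (a dominant map of curves induces a surjection on connected Albanese varieties), and on the lattice part $\mathrm{Div}^0_T(D)\to\mathrm{Div}^0_Z(C)$ is surjective after $\otimes\Q$ by the finite-index assumption; passing to $T_\Q$, which is exact and sends $A(C,Z)$ to an extension built out of $\mathrm{Lie}\,\Alb^0(C)$ and the lattice, one concludes $T_\Q(A(f))=H_1(f)$ is onto. (Alternatively, argue directly with the long exact homology sequence of the pairs and the fact that $H_1(D;\Q)\to H_1(C;\Q)$ is surjective for $f$ dominant while $\mathrm{Div}_T^0\otimes\Q\to\mathrm{Div}_Z^0\otimes\Q$ is onto.)

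Granting surjectivity of $H_1(f)$, the reduction is immediate: if $K(D,T,\alpha\circ A(f))$ is a sub-$\cR_1$-module of $\tilde{H}''_1(D,T;\Q)$, then its image under the $\cR_1$-linear, surjective map $H_1(f)$ is a sub-$\cR_1$-module of $\tilde{H}''_1(C,Z;\Q)$; but that image is precisely $H_1(f)\bigl(H_1(f)^{-1}(K(C,Z,\alpha))\bigr)=K(C,Z,\alpha)$ since $H_1(f)$ is surjective. This is exactly the assertion of Proposition \ref{key-propose} for $\alpha$.

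The only genuinely non-formal ingredient is the surjectivity of $H_1(f)$ on rational homology, i.e. the translation of the geometric hypotheses on $f$ into surjectivity of $A(f)$ after inverting isogenies; everything else is a diagram chase using the universal representation and Lemma \ref{HsimAcircB}. I expect that surjectivity step to be the main obstacle, essentially because one must handle the semi-abelian (toric) part of $\Alb^0$ carefully: dominance controls the abelian quotient, and the finite-index condition on divisor groups is what is needed to also control the toric part together with the lattice, so that $T_\Q(A(f))$ is onto and not merely of finite cokernel.
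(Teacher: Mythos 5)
Your proposal is correct and follows essentially the same route as the paper: the paper's proof simply observes that the hypotheses force $H_1(D,T;\Q)\to H_1(C,Z;\Q)$ to be surjective, so that $K(C,Z,\alpha)$ is the image of $K(D,T,\alpha\circ A(f))$ under this $\cR_1$-linear map. You merely spell out the surjectivity (via the lattice/semi-abelian pieces, or equivalently the long exact sequence of the pair) and the $\cR_1$-linearity of $H_1(f)$, both of which the paper leaves implicit.
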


\begin{proof}
Indeed, the hypothesis of the lemma imply that the homomorphism 
$H_1(D,T;\Q) \to H_1(C,Z,\Q)$ is surjective. 
If follows that $K(C,Z,\alpha)$ is the image 
of $K(D,T,\alpha')$ with 
$\alpha'=\alpha \circ A(f)$.
This finishes the proof.
\end{proof}

\begin{remark}
\label{rem-key-prop-3}
Assume that $f:D\to C$ induces a bijection 
on the sets of connected components and that 
$f(T)=Z$.
Then the hypothesis of Lemma
\ref{lemma-key-prop-2}
are satisfied. 
\end{remark}

\begin{lemma}
\label{lemma-key-prop-4}
\emph{Proposition \ref{key-propose}} holds when 
$M$ is a $0$-motive, \ie 
$M=[\mathcal{F}\to 0]$.

\end{lemma}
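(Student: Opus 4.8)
The plan is to reduce the problem to the already-settled case of $0$-motives, namely Proposition \ref{N0}, by exploiting the fact that $H_1(C,Z;\Q)$ fits into the relative homology exact sequence
$$0 \to \tilde{H}_1(C,\emptyset;\Q) \to \tilde{H}_1(C,Z;\Q) \to \tilde{H}_0(Z,\emptyset;\Q) \to \tilde{H}_0(C,\emptyset;\Q)$$
(valid since $C$ is a smooth affine curve, so $H_0(C,Z;\Q)$ injects appropriately after splitting off trivial pieces). When $M=[\cF\to 0]$ is a $0$-motive, the morphism $\alpha:A(C,Z)=[{\rm Div}^0_Z(C)\to \Alb^0(C)]\to [\cF\to 0]$ is, by definition of morphisms in $\mathcal{M}_1^{\Q}$, necessarily zero on $\Alb^0(C)$; hence $\alpha$ is induced by a map of lattices ${\rm Div}^0_Z(C)\to \cF$, \ie a morphism of $0$-motives $A_0(C,Z)\to \cF$ where $A_0(C,Z)\df[{\rm Div}^0_Z(C)\to 0]$. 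Under the comparison $T_{\Q}\circ A(C,Z)\simeq H_1(C,Z;\Q)$, the subspace $\tilde{H}_1(C,\emptyset;\Q)\subset H_1(C,Z;\Q)$ corresponds exactly to $\Lie\Alb^0(C)$ sitting inside ${\rm Div}^0_Z(C)(\C)\times_{\Alb^0(C)(\C)}\Lie\Alb^0(C)$, so it is automatically contained in $K(C,Z,\alpha)$; therefore $K(C,Z,\alpha)$ is the preimage in $H_1(C,Z;\Q)$ of the kernel of the induced map on $\tilde{H}_0(Z,\emptyset;\Q)$-part, which is a purely $0$-motivic datum.

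First I would make the above reduction precise: show that $K(C,Z,\alpha)$ always contains $\tilde{H}_1(C,\emptyset;\Q)$, and that the quotient $K(C,Z,\alpha)/\tilde{H}_1(C,\emptyset;\Q)$ is the kernel of a morphism of $0$-motives' Betti realizations, namely of $T_{\Q}$ applied to the induced map $[{\rm Div}^0_Z(C)\to 0]\to[\cF\to 0]$ (equivalently, to the restriction of $\alpha$ to the lattice parts, viewed inside the $0$-motive $[Z\otimes\Q\to \pi_0(C)\otimes\Q]$). Next I would invoke Proposition \ref{N0}: its proof shows precisely that for any morphism of $\Q$-constructible étale sheaves out of $\pi_0(X)\otimes\Q / \pi_0(Y)\otimes\Q$, the kernel of the Betti realization is a sub-$\End(T|_E)$-module for a finite sub-diagram $E$ built from a Galois extension trivializing everything in sight. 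Choosing $X$ and $Y$ so that $(X,Y,0)$ captures the pair $(\pi_0(C),\pi_0(Z))$—concretely $X=Z\sqcup C$ with appropriate closed subset, or directly the pair of étale $k$-schemes involved—yields a finite sub-diagram $E_0$ of ${}^D(Sch_k)_{\le 0}$ such that the relevant $0$-motivic kernel is stable under $\End(H_*|_{E_0})$.

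The remaining, and main, point is to transfer this stability statement from the $0$-motivic diagram to $\cR_1=\End(H_*:{}^D(Crv_k)\to\Qmod)$: I must produce a finite sub-diagram $E\subset {}^D(Crv_k)$ (not of ${}^D(Sch_k)_{\le 0}$, which does not sit inside ${}^D(Crv_k)$) whose endomorphism algebra, acting on $H_1(C,Z;\Q)$, preserves $K(C,Z,\alpha)$. The device for this is the boundary arrow $\delta:(C,Z,1)\to (Z,\emptyset,0)$—wait, this is not an arrow of ${}^D(Crv_k)$ either—so instead I would realize $\pi_0(Z)$ and $\pi_0(C)$ through $H_1$ of auxiliary curves: for a finite set of closed points $Z$, the curve $\P^1$ minus a few points, or disjoint unions of $\G_m$'s, has $H_1$ computing the relevant lattice, and there are maps of curves inducing the combinatorial maps between connected components. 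Thus I would exhibit objects of ${}^D(Crv_k)$ and arrows among them realizing, after applying $H_1$, the $0$-motivic picture together with the Galois action coming from the trivializing extension $l/k$; then $\End$ of the resulting finite sub-diagram surjects onto the group algebra $\Q[\Gal(l/k)]$ acting as needed, and the computation in Proposition \ref{N0} applies verbatim. The hard part is precisely this translation—checking that the action of $\cR_1$ on $H_1(C,Z;\Q)$ factors, on the relevant graded pieces, through an action that is "of $0$-motivic type", so that Galois-equivariance of $\alpha$ forces stability of the kernel; once the dictionary between smooth affine curves and their $H_1$ lattices is set up carefully this is a finite bookkeeping exercise, but it is where all the content of the lemma sits.
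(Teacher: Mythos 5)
Your overall strategy is the right one, and it is in fact the paper's: the only way to make a $0$-motivic target visible to $\cR_1=\End(H_*:{}^D(Crv_k)\to\Qmod)$ is to realize it as $H_1$ of an object of ${}^D(Crv_k)$, concretely via the identification $H_0(\Spec(l);\Q)\simeq H_1(\Aff^1_l,\{0,1\};\Q)$, and then to realize the lattice map $\alpha_0:{\rm Div}^0_Z(C)\to\Z_{tr}(l)$ by arrows of the diagram. Your preliminary observation that $\tilde{H}_1(C,\emptyset;\Q)\subset K(C,Z,\alpha)$ is correct (and harmless, though the paper does not need it). The preliminary reductions you half-state should be made explicit in the order the paper uses them: first reduce to $\mathcal{F}$ simple and then to $\mathcal{F}=\Z_{tr}(l)$ via Lemma \ref{lemma-key-prop-1.5}, then base-change $(C,Z)$ along $l/k$ via Lemma \ref{lemma-key-prop-2} so that $Z$ consists of $l$-rational points; only then does $\alpha_0$ decompose as a $\Q$-linear combination of the evaluation maps $u(z,\sigma)$.

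The genuine gap is that you stop exactly where the proof begins. You write that transferring the $\End(H_*|_{E_0})$-stability from a sub-diagram of ${}^D(Sch_k)_{\leq 0}$ to $\cR_1$ is ``a finite bookkeeping exercise\dots where all the content of the lemma sits'' --- but that exercise is the lemma, and invoking Proposition \ref{N0} does not help: $\End(H_*|_{E_0})$ for $E_0\subset{}^D(Sch_k)_{\leq 0}$ has no a priori map to $\cR_1$, so Galois-equivariance in the $0$-motivic diagram proves nothing about $\cR_1$-stability. What is actually needed, and what the paper constructs, is an explicit zigzag in ${}^D(Crv_k)$ whose induced map on $H_1$ equals \eqref{eq-lem-key-prop-4} for each fixed $(z,\sigma)$: one chooses an auxiliary point $z_0\in Z\smallsetminus\{z\}$ (if $Z=\{z\}$ the map is zero), a finite morphism $C\to\Aff^1_l$ injective on $Z$ with $z_0\mapsto 0$, and then a chain of arrows through $(\Aff^1_l,T,1)$, a disjoint union splitting off the coefficient at $t$ (the image of $z$), and a rescaling by $t^{-1}$ landing on $(\Aff^1_l,\{0,1\},1)$. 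This exhibits each $u(z,\sigma)$, hence $\alpha$ itself, as a morphism of $\cR_1$-modules, so its kernel is a sub-$\cR_1$-module. Without producing such a zigzag (or an equivalent device), your argument does not close.
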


\begin{proof}
Using Lemma \ref{lemma-key-prop-1.5},
we may assume that the lattice
$\mathcal{F}$ is simple (as an object of 
$\mathcal{M}_0^{\Q}$). Therefore, there exists a 
finite extension 
$l/k$ such that $\mathcal{F}$ is a direct factor of 
$\Z_{tr}(l)$ (in $\mathcal{M}_0^{\Q}$). 
Thus, we may assume that 
$\mathcal{F}=\Z_{tr}(l)$. We may also enlarge 
$l$ and assume that $l$ is Galois and contains the residue field 
of every point in $Z$. Let $C'=C\otimes_k l$ and $Z'=Z\otimes_k l$. 
By Lemma 
\ref{lemma-key-prop-2}, we may replace 
$(C,Z)$ by $(C',Z')$. In other words, we may assume that 
$C$ is defined over $l$ and every point of $Z$ is rational over $l$.

Now, ${\rm Div}^0_Z(C)$ is the kernel of 
$\Z_{tr}(Z) \to \Z_{tr}(\pi_0(C))$.
Hence, it is a direct factor of $\Q_{tr}(Z)$. 
On the other hand, $\Hom (\Z_{tr}(Z),\Z_{tr}(l))$ has a basis 
which is indexed by $(z,\sigma)$ where 
$z$ is a point of $Z$ and 
$\sigma:k(z)\simeq l$ is a $k$-isomorphism. A 
couple $(z,\sigma)$ corresponds to the composition of
$$u(z,\sigma):\Z_{tr}(Z) \to \Z_{tr}(k(z)) \overset{\sigma}{\simeq} \Z_{tr}(l).$$
It follows that 
$\alpha_0:{\rm Div}^0_Z(C) \to \Z_{tr}(l)$ 
can be written as the composition of 
${\rm Div}^0_Z(C) \hookrightarrow \Z_{tr}(Z)$ 
and a linear combination 
$\sum_{(z,\sigma)}a_{z,\sigma}\cdot u(z,\sigma)$
with $a_{z,\sigma}\in \Q$. (Recall that 
$\alpha$ is a morphism in $\mathcal{M}_1^{\Q}$.)

We claim that, for a fixed $(z,\sigma)$, the composition of
\begin{equation}
\label{eq-lem-key-prop-4}
H_1(C,Z;\Q)\simeq T_{\Q}A(C,Z) \!\xymatrix@C=1.1pc{\ar[r]^-{u(z,\sigma)} &} \! T_{\Q}(\Z_{tr}(l))\simeq H_0(\Spec(l);\Q)
\end{equation}
coincides with a morphism obtained from a zigzag in the diagram ${}^D(Crv_k)$ modulo the isomorphism $H_0(\Spec(l);\Q)\simeq 
H_1(\Aff^1_{\ell}, \{0,1\};\Q)$. To show this, we need a construction.
Let $z_0\in Z$ be a point different from $z$. (We may assume that 
$Z\neq \{z\}$ because otherwise, the morphism 
\eqref{eq-lem-key-prop-4} is necessarily zero.)
Consider a finite morphism
$C\to \Aff^1_l$ which is injective on $Z$ and sends
$z_0$ to the zero section. Denote 
$T\subset \Aff^1_l$ the image of 
$Z$ and $t\in T$ the image of $z$. Then our zigzag is the following:
$$(C,Z,1) \rightarrow (\Aff^1_l,T,1) \leftarrow
(\Aff^1_l,\{0,t\},1)\sqcup (\Aff^1_l,T\smallsetminus \{t\},1)\overset{(*)}{\rightarrow}$$
$$(\Aff^1_l,\{0,t\},1)\sqcup (\Aff^1_l,\{0\},1) \leftarrow 
(\Aff^1_l,\{0,t\},1)\overset{t^{-1}}{\to} (\Aff^1_l,\{0,1\},1)$$
where the arrow $(*)$ is given by the identity on the first factor and
by the zero morphism on the second factor.
It follows that 
\eqref{eq-lem-key-prop-4}
is a morphism of $\cR_1$-modules.
This proves that 
$$H_1(C,Z;\Q)\simeq T_{\Q}A(C,Z) \!\xymatrix@C=1.1pc{\ar[r]^-{\alpha} &} \! T_{\Q}(\Z_{tr}(l))\simeq H_0(\Spec(l);\Q)$$
is also a morphism of $\cR_1$-modules. Hence, its kernel is a sub-$\cR_1$-module of $H_1(C,Z;\Q)$.
\end{proof}

\begin{lemma}
\label{lemma-key-prop-5}
Let $(C,Z,1)$ be an object of ${}^D(Crv_k)$ and let
$$\beta:[\mathcal{L}\to 0]\to A(C,Z)$$ 
be a morphism in $\mathcal{M}_1^{\Q}$
from a lattice $\mathcal{L}$.
Then, the image of the composition
$$T_{\Q}\mathcal{L}\to T_{\Q}A(C,Z)\simeq H_1(C,Z;\Q)$$
is a sub-$\cR_1$-module of $H_1(C,Z;\Q)$.  
\end{lemma}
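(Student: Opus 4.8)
The plan is to reduce the statement of Lemma \ref{lemma-key-prop-5} to the already-established Lemma \ref{lemma-key-prop-4} by a duality/adjunction argument. Given a lattice $\mathcal{L}$ and a morphism $\beta:[\mathcal{L}\to 0]\to A(C,Z)$ in $\mathcal{M}_1^{\Q}$, the source is a pure weight-$0$ object, so $\beta$ factors through the maximal sub-$0$-motive of $A(C,Z)$; since $A(C,Z)=[{\rm Div}^0_Z(C)\to \Alb^0(C)]$ and $\Alb^0(C)$ is semi-abelian (hence has no nontrivial lattice quotient that could receive $\mathcal{L}$ \emph{after} the differential), the relevant thing is to understand $\Hom_{\mathcal{M}_1^{\Q}}([\mathcal{L}\to 0],A(C,Z))$. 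The key point is Cartier duality: the dual $1$-motive $A(C,Z)^{\vee}$ is again a $1$-motive which, up to isogeny, is computed from the \emph{compactly supported} (or relative) cohomology of the curve, and under this duality a map $[\mathcal{L}\to 0]\to A(C,Z)$ corresponds to a map $A(C,Z)^{\vee}\to [\mathcal{L}^{\vee}\to 0](1)$ — but it is cleaner to work directly on the level of Betti realizations and the weight filtration.

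Concretely, first I would observe that the image of $T_{\Q}\mathcal{L}\to H_1(C,Z;\Q)$ lands inside the weight $\leq 0$ part $W_0 H_1(C,Z;\Q)$, which is exactly the image of $H_1(C;\Q)\to H_1(C,Z;\Q)$ is \emph{not} quite right; rather $W_0$ of the relative homology is the kernel of the surjection onto $\widetilde{H}_0(Z)$ coming from the boundary map, i.e. it equals the image of $H_1(C;\Q)$. Wait — more precisely $\mathcal{L}\to {\rm Div}^0_Z(C)$ composed with the differential of $A(C,Z)$ is zero (a morphism of complexes from $[\mathcal{L}\to 0]$), so the composite $T_{\Q}\mathcal{L}\to {\rm Div}^0_Z(C)\otimes\Q$ actually takes values in $\ker({\rm Div}^0_Z(C)\otimes\Q\to \Alb^0(C))$, which is precisely the ``finite part'' $H_1(C;\Q)$ sitting inside $H_1(C,Z;\Q)$. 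So the image of $T_{\Q}\mathcal{L}$ is contained in $H_1(C;\Q)$. Next, I would dualize: a nonzero map $\mathcal{L}\to {\rm Div}^0_Z(C)$ landing in that kernel is the same datum as a map $\Alb^0(C)^{\vee}$ — no; the cleanest route is to use that ${\rm Div}^0_Z(C)$ is, up to isogeny, a direct factor of $\Z_{tr}(Z)$, hence $\Hom(\mathcal{L},{\rm Div}^0_Z(C))\otimes\Q$ embeds into $\Hom(\mathcal{L},\Z_{tr}(Z))\otimes\Q$, and a map $\mathcal{L}\to \Z_{tr}(Z)$ is a $\Q$-linear combination of maps $u(z,\sigma)^{\vee}$ dual to the ones appearing in the proof of Lemma \ref{lemma-key-prop-4}.

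With that reduction in hand, the strategy mirrors Lemma \ref{lemma-key-prop-4}: for each point $z\in Z$ and embedding $\sigma:k(z)\hookrightarrow \bar k$ I would exhibit the map $H_0(\Spec l;\Q)\simeq T_{\Q}\mathcal{L}\to H_1(C;\Q)\subset H_1(C,Z;\Q)$ as induced by an explicit zigzag in the diagram ${}^D(Crv_k)$. The natural candidate is the ``coboundary'' zigzag: choosing a second point $z_0\in Z$, the class of a small loop around $z$ (or the path from $z_0$ to $z$ in the relative homology, pushed into $H_1(C)$ via a tubular argument) can be produced from the triple $(C\smallsetminus\{z,z_0\}, \emptyset, 1)\to (C,\{z,z_0\},1)$ and the boundary arrow $\delta$, together with the finite map to $\Aff^1$ normalizing the configuration exactly as in Lemma \ref{lemma-key-prop-4}. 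Since such a zigzag is by construction a morphism of representations of ${}^D(Crv_k)$, it is $\cR_1$-linear after applying $H_*(-;\Q)$, so its image is a sub-$\cR_1$-module, and summing over $(z,\sigma)$ with the rational coefficients $a_{z,\sigma}$ gives that the image of $T_{\Q}\mathcal{L}$ is a sub-$\cR_1$-module of $H_1(C,Z;\Q)$.

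The main obstacle I expect is the last geometric identification: realizing the class in $H_1(C;\Q)$ dual to $u(z,\sigma)$ by an \emph{explicit} zigzag of curves, since unlike in Lemma \ref{lemma-key-prop-4} the target is a one-dimensional homology group of the curve itself rather than an $H_0$. This forces a choice of an auxiliary arc or loop and a careful compatibility check between the Betti realization of the zigzag (which involves the boundary map $\partial$ in the long exact sequence of a pair) and the map $\Alb$ sends it to. A clean way around this is to not produce the class directly but instead to use Cartier duality of $1$-motives together with Lemma \ref{lemma-key-prop-4}: dualizing $\beta$ gives a map $A(C,Z)^{\vee}\to [\mathcal{L}^{\vee}\to 0](1)$, i.e. essentially a map from a ``cohomological $1$-motive of the curve'' to a $0$-motive, to which the argument of Lemma \ref{lemma-key-prop-4} applies after one checks that Cartier duality and $(1)$-twist are both realized inside the diagram ${}^D(Crv_k)$ up to the operations already available (products, the affine-line trick). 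Verifying that compatibility — that the $\cR_1$-module structure is respected by duality at the level of the curve diagram — is the delicate point, but it is forced once one notes that every such duality can be spread out over a single smooth affine curve with its finite set of punctures, which is exactly the kind of data ${}^D(Crv_k)$ records.
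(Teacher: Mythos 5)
There is a genuine gap, in fact two. First, your opening reduction rests on a false containment: the image of $T_{\Q}\mathcal{L}\to T_{\Q}A(C,Z)\simeq H_1(C,Z;\Q)$ is \emph{not} contained in $H_1(C;\Q)$. Unwinding $T_{\Z}([\cF\to\cG])=\cF(\C)\times_{\cG(\C)}\Lie\cG(\C)$, the map induced by $\beta$ sends $x\in\mathcal{L}(\C)$ to the pair $(\beta_0(x),0)$, whose boundary component $\beta_0(x)\in{\rm Div}^0_Z(C)$ is in general nonzero; by contrast $H_1(C;\Q)\subset H_1(C,Z;\Q)$ is exactly the kernel of the boundary map to $\widetilde{H}_0(Z)$. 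What the vanishing of $\mathcal{L}\to{\rm Div}^0_Z(C)\to\Alb^0(C)$ gives you is that these divisors are Albanese-trivial, not that the classes have zero boundary. Second, the fallback via Cartier duality does not reduce to Lemma \ref{lemma-key-prop-4}: dualizing $[\mathcal{L}\to 0]\to A(C,Z)$ produces a map into the Tate twist $[\mathcal{L}^{\vee}\to 0](1)$, i.e.\ into a torus, not into a $0$-motive, and the diagram ${}^D(Crv_k)$ is effective with no access to duals or twists. You correctly flag this compatibility as "the delicate point," but it is precisely the point, and the concluding sentence asserting it is "forced" is not an argument.

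The idea the paper actually uses, and which is absent from your proposal, is to convert the Albanese-triviality of $\beta_0$ into an explicit $\Aff^1$-homotopy: there is a finite correspondence $\gamma:\Aff^1_l\to C$ with $\gamma\circ(i_1-i_0)=j_Z\circ\beta_0$. One then chooses a finite Galois cover $c:E\to\Aff^1_l$ with group $G$ splitting $\gamma\circ c$ into a linear combination $\sum a_if_i$ of genuine morphisms $f_i:E\to C$, enlarges $Z$ to $Z\cup T$ with $T=\bigcup f_i(F)\smallsetminus Z$, $F=c^{-1}\{0,1\}$ (harmless, since $H_1(C,Z;\Q)\to H_1(C,Z\cup T;\Q)$ is an injective $\cR_1$-linear map), and identifies the image of $T_{\Q}\mathcal{L}$ with the image of the invariants $H_1(E,F;\Q)^{G}$ under the $\cR_1$-linear map $\sum a_if_{i*}$. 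Since $G$ acts through arrows of ${}^D(Crv_k)$, the invariants form a sub-$\cR_1$-module, and the conclusion follows. Your zigzag heuristic is in the spirit of Lemma \ref{lemma-key-prop-4}, but without the correspondence $\gamma$ (and the Galois averaging that turns it into morphisms of curves) there is no mechanism producing the required classes from the diagram.
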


\begin{proof}
It suffices to consider the case where $\mathcal{L}$ is simple
in $\mathcal{M}_0^{\Q}$. 
In this case, $\mathcal{L}$ is a direct factor of $\Z_{tr}(l)$
(in $\mathcal{M}_0^{\Q}$). 
Thus, we may assume that $\mathcal{L}=\Z_{tr}(l)$. 
Then a multiple of $\beta$ corresponds to a morphism
of lattices 
$$\beta_0:\Z_{tr}(l)\to \Z_{tr}(Z)$$ 
whose image is contained in ${\rm Div}^0_Z(C)$
and such that the composition
$$\Z_{tr}(l) \to {\rm Div}^0_Z(C) \to \Alb^0(C)$$
is zero. 
Therefore we can find 
a finite correspondence
$\gamma:\Aff^1_l \to C$
such that $\gamma \circ (i_1-i_0)=j_Z\circ \beta_0$
where $j_Z:Z\hookrightarrow C$ is the inclusion.

Recall that we want to show that the image of 
$$T_{\Q}(\Z_{tr}(l)) \overset{\beta}{\to} T_{\Q}A(C,Z) \simeq 
H_1(C,Z;\Q)$$ 
is a sub-$\cR_1$-module. 
For this, we are free to add to $Z$ any closed subset 
$T\subset C\setminus Z$ of dimension $0$.
We may find a finite Galois cover 
$c:E\to \Aff^1_l$ such that
$\gamma\circ c$ is a linear combination of maps 
from $E$ to $C$, i.e,
$\gamma\circ c=\sum_{i=1}^n a_i f_i$ with $a_i\in \Z$. 
We take $T=\bigcup_{i=1}^n f_i(F) \setminus Z$
with $F=c^{-1}\{0,1\}$.

Now, let $G$ be the Galois group of $c:E \to \Aff^1_l$.
Then $G$ acts on the $\cR_1$-module 
$\widetilde{H}''_1(E,F;\Q)$ and 
the canonical map
$$H_1(E,F;\Q)\to H_1(\Aff^1,\{0,1\};\Q)$$ 
identifies 
$H_1(\Aff^1,\{0,1\};\Q)$
with the sub-$\cR_1$-modules of invariants. 
On the other hand, the maps
$$f_{i*}:H_1(E,F;\Q) \to H_1(C,Z\cup T;\Q)$$
are $\cR_1$-linear. 
Hence, the image of 
$H_1(E,F;\Q)^{G}$ by $\sum_{i=1}^n a_i f_{i*}$ is a 
sub-$\cR_1$-module of 
$\widetilde{H}''_1(C,Z\cup T;\Q)$.
By construction, it coincides with the image of the composition of
$$T_{\Q}(\Z_{tr}(l)) \overset{\beta}{\to} T_{\Q}A(C,Z\cup T) \simeq 
H_1(C,Z\cup T;\Q).$$ 
This finishes the proof of the lemma.
\end{proof}

We still need some more lemmas.

\begin{lemma}
\label{lemma-key-prop-6}
It suffices to prove \emph{Proposition
\ref{key-propose}} when $M=[\mathcal{F} \to \Alb^0(D)]$ 
with $D$ a smooth affine curve.

\end{lemma}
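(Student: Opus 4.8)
The goal is to reduce Proposition \ref{key-propose} to the special case where the target $1$-motive $M$ has the form $[\mathcal{F}\to\Alb^0(D)]$ for a smooth affine curve $D$; that is, the semi-abelian part of $M$ should itself be realized as $\Alb^0$ of a curve in our diagram.

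\begin{proof}
Let $\alpha:A(C,Z)\to M$ be a morphism in $\mathcal{M}_1^{\Q}$, with $M=[\mathcal{F}\to\mathcal{G}]$. By Lemma \ref{lemma-key-prop-4}, Proposition \ref{key-propose} holds when $\mathcal{G}=0$, so we may assume $\mathcal{G}\neq 0$. The plan is as follows. First I would replace $M$ by its image $\alpha(A(C,Z))$; since a sub-quotient of a $1$-motive is a $1$-motive and $T_{\Q}$ is exact, the kernel $K(C,Z,\alpha)$ is unchanged, so we may assume $\alpha$ is an epimorphism. In particular the semi-abelian part $\mathcal{G}$ of $M$ is a quotient of $\Alb^0(C)$. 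Now the key classical input is that every semi-abelian variety which is a quotient of $\Alb^0(C)$ for some smooth affine curve $C$ is itself of the form $\Alb^0(D)$ for some smooth affine curve $D$, together with a surjection $\Alb^0(C)\to\Alb^0(D)$ coming (after a finite correspondence) from a morphism of curves. Concretely, one chooses a curve $D$ mapping to $\mathcal{G}$ generically finitely; pulling back to $C$ and taking a suitable correspondence one arranges a dominant map of curves inducing the quotient on Albanese.

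\medskip

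Next I would lift this to the level of $1$-motives. Having a surjection $\Alb^0(C)\twoheadrightarrow\mathcal{G}$ realized by curves, set $\mathcal{F}'=\mathcal{F}\times_{\mathcal{G}}\Alb^0(D)$ (pulling back the lattice along the isogeny $\Alb^0(D)\to\mathcal{G}$); then $M'=[\mathcal{F}'\to\Alb^0(D)]\to M$ is an isogeny in $\mathcal{M}_1^{\Q}$, hence an isomorphism there, so replacing $M$ by $M'$ does not change anything in $\mathcal{M}_1^{\Q}$. Thus we may assume $M=[\mathcal{F}\to\Alb^0(D)]$ with $D$ a smooth affine curve. This is exactly the asserted reduction. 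The only remaining point is that the curve $D$ and the morphism producing $\Alb^0(D)$ as the relevant quotient, together with the lattice $\mathcal{F}$, live inside the diagram ${}^D(Crv_k)$ (or are built from objects of it by finite correspondences), so that passing from $M$ to $M'$ is compatible with the $\cR_1$-module structures and with the statement of Proposition \ref{key-propose}; this is where Lemmas \ref{lemma-key-prop-1.5} and \ref{lemma-key-prop-2} (and Remark \ref{rem-key-prop-3}) are used to allow the auxiliary curve changes and direct-sum decompositions.

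\medskip

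The main obstacle I expect is the structural claim that a quotient semi-abelian variety of $\Alb^0(C)$ is again $\Alb^0(D)$ for a curve $D$ realized inside the diagram, with the quotient map induced by a finite correspondence of curves rather than just an abstract homomorphism. For the abelian part this is a standard consequence of the fact that every abelian variety is a quotient of a Jacobian (Bertini/Lefschetz-type arguments on hyperplane sections, already invoked in the proof of Proposition \ref{generate}); for the torus part one uses that tori appear as $\Alb^0$ of affine curves with suitably chosen boundary points, and that the extension splits up to isogeny after passing to a finite cover. Assembling these over $\mathcal{M}_1^{\Q}$, where isogenies are invertible, and checking that all the intermediate curves can be taken in ${}^D(Crv_k)$, is the technical heart; everything else is formal manipulation with the exact faithful functor $T_{\Q}$ and the $\cR_1$-module structures, already licensed by the earlier lemmas.
\end{proof}
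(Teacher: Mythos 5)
Your reduction goes in the wrong direction and rests on a false structural claim. After replacing $M$ by the image of $\alpha$ (which is harmless), you assert as ``key classical input'' that every semi-abelian quotient of $\Alb^0(C)$ is itself of the form $\Alb^0(D)$ for a smooth affine curve $D$. This is false already for abelian varieties: quotients of Jacobians exhaust \emph{all} abelian varieties, whereas the abelian quotient of $\Alb^0(D)$ is always (a product of) Jacobians, and for $g\geq 4$ a generic $g$-dimensional abelian variety is not isogenous to a product of Jacobians (dimension count: $g(g+1)/2>3g-3$). So the curve $D$ and the isogeny $\Alb^0(D)\to\mathcal{G}$ that your construction of $\mathcal{F}'=\mathcal{F}\times_{\mathcal{G}}\Alb^0(D)$ requires simply need not exist, and inverting isogenies in $\mathcal{M}_1^{\Q}$ does not repair this.

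Note that the lemma does not ask for $\mathcal{G}$ to \emph{be} an $\Alb^0(D)$, only for the target to have the form $[\mathcal{F}\to\Alb^0(D)]$; the correct move is therefore to \emph{embed} $\mathcal{G}$ rather than present it as a quotient. If $\mathcal{G}\hookrightarrow\Alb^0(D)$ is a monomorphism up to isogeny and $N=[\mathcal{F}\to\Alb^0(D)]$, then $u:M\to N$ is a monomorphism in $\mathcal{M}_1^{\Q}$, hence $T_{\Q}(u)$ is injective and $K(C,Z,\alpha)=K(C,Z,u\circ\alpha)$, which gives the reduction. The existence of such an embedding is what the paper proves: one splits off the maximal torus factor up to isogeny and embeds it into a product of tori $\G_m\otimes\Z_{tr}(l)=\Alb^0(\Aff^1_l\smallsetminus\{0\})$; for the complementary factor, which admits no nonzero map to a torus, one passes to the dual $1$-motive $\mathcal{G}^{\vee}=[\mathcal{L}\to\mathcal{A}]$, draws a general smooth curve $\overline{D}\subset\mathcal{A}$ through generators of $\mathcal{L}$ to obtain a surjection $A(\overline{D},T)\twoheadrightarrow\mathcal{G}^{\vee}$, and dualizes back to get $\mathcal{G}\hookrightarrow\Alb^0(\overline{D}\smallsetminus T)$. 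Finally, your last paragraph worries about realizing the maps by correspondences of curves compatibly with the $\cR_1$-structure; that issue is genuine but belongs to Lemma \ref{lemma-key-prop-7}, not here --- the present lemma is purely a statement about which targets $M$ must be considered.
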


\begin{proof}
Let $M=[\mathcal{F} \to \mathcal{G}]$ be a general $1$-motive.
It is enough to show that 
$\mathcal{G}$ embeds, up to isogeny, into $\Alb^0(D)$ with 
$D$ a smooth and affine curve. 
Indeed, letting  
$N=[\mathcal{F} \to \Alb^0(D)]$, one gets a monomorphism (in 
$\mathcal{M}_1^{\Q}$) of $1$-motives
$u:M\to N$ and the equality 
$K(C,Z,\alpha)=K(C,Z,u\circ \alpha)$ holds.

We now construct a monomorphism, up to isogeny, 
of group schemes
$\mathcal{G} \hookrightarrow \Alb^0(D)$.
If there is a an isogeny between
$\mathcal{G}$ and a product 
$\mathcal{G}_1\times \mathcal{G}_2$, 
it is enough to consider
$\mathcal{G}_1$ and $\mathcal{G}_2$ separately.
(If $\mathcal{G}_i\subset \Alb^0(D_i)$ for $i\in \{1,2\}$, 
just take $D=D_1\sqcup D_2$.)
If $\mathcal{G}$ is a torus, which splits over 
a finite extension $l/k$, one can embed 
$\mathcal{G}$ into a product of tori of the form
$\G_m\otimes\Z_{tr}(l)=\Alb^0(\Aff^1_l\smallsetminus\{0\})$.
Therefore, it remains to treat the case where $\mathcal{G}$ is not isogenous to a product where one of the factor is a non trivial torus.
This is equivalent to say that $\mathcal{G}$ has non nontrivial map 
to a torus. 
Consider the dual $1$-motive $\mathcal{G}^{\vee}$. It is of the form $[\mathcal{L} \to \mathcal{A}]$ with $\mathcal{A}$ an abelian variety
and $\mathcal{L}$ a torsion-free lattice.
Our condition on $\mathcal{G}$ implies that the map
$\mathcal{L}\to \mathcal{G}$ is a monomorphism of schemes. 
By drawing a general smooth curve $\overline{D}\subset \mathcal{A}$ containing generators of $\mathcal{L}$, we obtain a pair $(\overline{D},T)$ with a surjective morphism
$A(\overline{D},T)\twoheadrightarrow \mathcal{G}^{\vee}$. Dualizing back, we see that $\mathcal{G}$ injects inside 
$\Alb^0(D)$ with $D=\overline{D}-T$.
\end{proof}

From now on, we assume that 
$M=[\mathcal{F}\to \Alb^0(D)]$
with $D$ a smooth affine curve. 
A multiple of the morphism 
$\alpha:A(C,Z) \to M$ induces a morphism of 
semi-abelian varieties
$\alpha_1:\Alb^0(C) \to \Alb^0(D)$.

\begin{lemma}
\label{lemma-key-prop-7}
To prove \emph{Proposition
\ref{key-propose}},
we may assume that 
$\alpha_1$ is induced 
by a linear combination of maps from $C$ to $D$. 
In other words, we may assume that 
$\alpha_1=\sum_{i=1}^n a_i \Alb^0(f_i)$ for some maps 
$f_i:C \to D$ and integers $a_i\in \Z$.
\end{lemma}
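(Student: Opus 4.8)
The plan is to realize a nonzero multiple of $\alpha_1$ through a \emph{Galois} cover of $C$ and then transport everything along that cover by means of Lemma \ref{lemma-key-prop-2}. Two preliminary reductions are harmless. First, replacing $\alpha$ by a nonzero integer multiple changes nothing: since $T_{\Q}$ is $\Q$-linear one has $K(C,Z,N\alpha)=K(C,Z,\alpha)$ for $N\in\Z\smallsetminus\{0\}$, so we may always clear denominators. Second, for a finite extension $l/k$ the morphism $C\otimes_k l\to C$ together with $Z\otimes_k l$ satisfies the hypotheses of Lemma \ref{lemma-key-prop-2} by Remark \ref{rem-key-prop-3}; hence a finite base change is free, and we may assume the relevant schemes have enough rational points (so that $\Alb^1(C)$, $\Alb^1(D)$ and the push-out of $\Alb^1(C)$ along $\alpha_1$ are trivial torsors). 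Finally, if $\Alb^0(D)=0$ (e.g. $D$ a union of copies of $\Aff^1$) there is nothing to prove, so we may assume the Abel--Jacobi maps of $C$ and of $D$ are locally closed immersions.

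The heart of the proof is the assertion that, after these reductions, there is a smooth affine curve $\Gamma$, a finite surjective morphism $p\colon\Gamma\to C$ and a morphism $q\colon\Gamma\to D$ with $q_*\circ p^{!}=N\alpha_1$ in $\mathcal{M}_1^{\Q}$ for some integer $N\geq 1$, where $p^{!}\colon\Alb^0(C)\to\Alb^0(\Gamma)$ is the transfer attached to the finite map $p$. I would prove this by the classical theory of correspondences between curves. On the abelian quotients, every homomorphism $\Jac(\bar C)\to\Jac(\bar D)$ is induced by a divisor on $\bar C\times\bar D$; discarding the components contracted by the first projection and normalizing the rest yields a union of curves finite and surjective over $\bar C$, and restricting to the affine parts one deletes from $\Gamma$ the finitely many points mapping into $\bar D\smallsetminus D$ and adjoins vertical components over the finitely many points of $C$ thereby lost (both operations leaving $q_*\circ p^{!}$ unchanged, vertical components inducing $0$ on $\Alb^0$), so as to keep $p$ finite and surjective onto $C$ with $q$ taking values in the affine curve $D$; if need be, the finitely many auxiliary points appearing in $C$ are added to $Z$. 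The toric part of $\alpha_1$ is treated in the same spirit, realizing the relevant homomorphisms $\Alb^0(C)\to\G_m$ by units of $\mathcal{O}(C)$ and identifying the $\G_m$-factors of the toric part of $\Alb^0(D)$ with rational functions on $\bar D$ whose polar loci lie in the boundary $\bar D\smallsetminus D$.

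Granting this, the conclusion is formal. Let $\pi\colon C'\to C$ be the Galois closure of $p\colon\Gamma\to C$, with $r\colon C'\to\Gamma$, $\pi=p\circ r$ and Galois group $G$, and set $q'=q\circ r\colon C'\to D$. Since $r_*\circ r^{!}=\deg(r)$ on $\Alb^0(C)$, one has
\[
q'_{*}\circ\pi^{!}\;=\;q_*\circ r_*\circ r^{!}\circ p^{!}\;=\;\deg(r)\cdot q_*\circ p^{!}\;=\;\deg(r)\,N\,\alpha_1 .
\]
Now replace $(C,Z)$ by $(C',\pi^{-1}(Z))$, which is legitimate by Lemma \ref{lemma-key-prop-2} (the index condition holding because $\pi$ is finite and surjective; compare Remark \ref{rem-key-prop-3}), and then replace $\alpha$ by $\deg(r)\,N$ times $\alpha\circ A(\pi)$. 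Using the norm identity $\pi^{!}\circ\pi_{*}=\sum_{g\in G}\Alb^0(g)$ for the Galois cover $\pi$, the induced morphism $\Alb^0(C')\to\Alb^0(D)$ is
\[
q'_{*}\circ\pi^{!}\circ\pi_{*}\;=\;q'_{*}\circ\Bigl(\sum_{g\in G}\Alb^0(g)\Bigr)\;=\;\sum_{g\in G}\Alb^0(q'\circ g),
\]
a $\Z$-linear combination of the morphisms $q'\circ g\colon C'\to D$; after clearing the multiple $\deg(r)\,N$ this is precisely the situation asserted in the lemma.

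I expect the only real difficulty to be the construction of $\Gamma$, $p$, $q$ in the second paragraph. The naive choice — the graph of a homomorphism — already fails to be surjective onto $C$ for something as simple as multiplication by $2$ on an elliptic curve with a point removed, so one really has to produce a correspondence that is simultaneously finite and surjective over $C$ and has the affine curve $D$ as target, while also accommodating the toric part of $\alpha_1$ and the finite set $Z$ (the extra points produced in the process being absorbed by Lemma \ref{lemma-key-prop-2} and by the freedom to enlarge $Z$). Once such a correspondence is in hand, the Galois-closure computation above goes through verbatim.
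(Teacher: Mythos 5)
Your overall architecture is the same as the paper's: first realize (a multiple of) $\alpha_1$ by a finite correspondence from (a neighbourhood of $Z$ in) $C$ to $D$, then pass to a finite cover over which the correspondence splits into a $\Z$-linear combination of graphs of morphisms, absorbing everything via Lemma \ref{lemma-key-prop-2}. Your Galois-closure endgame ($\pi^{!}\circ\pi_*=\sum_{g\in G}\Alb^0(g)$, hence $q'_*\circ\pi^{!}\circ\pi_*=\sum_g\Alb^0(q'\circ g)$) is correct and is a concrete version of the paper's final sentence (``choose a finite cover $r:C''\to C$ such that $\gamma\circ r$ is a linear combination of morphisms''). The preliminary reductions (clearing denominators, base change along $l/k$) also match the paper, which base-changes to arrange sections of $C\to\pi_0(C)$ and $D\to\pi_0(D)$.

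The gap is exactly where you say you expect it, and your sketch of that step has two genuine problems. (a) You cannot ``adjoin vertical components'' to repair finiteness of $p:\Gamma\to C$ at the points of $C$ over which $\bar\Gamma$ meets $\bar C\times(\bar D\smallsetminus D)$: a vertical component $\{c\}\times D$ with $D$ affine of dimension one is not finite over $C$, so any correspondence finite and surjective over $C$ has none. The correct move is to delete the finitely many offending points of $C$ (legitimate by Lemma \ref{lemma-key-prop-2} for a dense open containing $Z$), but then you must guarantee that none of them lies in $Z$ — this is not automatic from a moving argument on $\bar C\times\bar D$ and is precisely what the paper's construction delivers: it works with the semi-local scheme $C_Z$ of $C$ at $Z$ and the isomorphism $h_0(D)(C_Z)\simeq\Alb(D)(C_Z)$ (obtained from $h_0(D)\to\Alb(D)$ being an isomorphism on finitely generated extensions of $k$ together with \cite[Proposition 11.1]{MVW}), which produces a finite correspondence $\gamma\in Cor(C',D)$ on a dense open $C'\supset Z$ inducing $\alpha_1$. (b) Your treatment of the toric part is not a proof. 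There is no canonical splitting of the semi-abelian varieties involved; after realizing the map on abelian quotients $\Jac(\bar C)\to\Jac(\bar D)$ by a divisorial correspondence, the error term is a homomorphism $\Alb^0(C)\to T_D$ into the toric part of $\Alb^0(D)$, and realizing \emph{that} by a correspondence with target the affine curve $D$ (so that it lands in $T_D\subset\Alb^0(D)$) is the genuinely delicate point — it amounts to the relative Picard computation $h_0(D)(K)\simeq Pic(\overline D,D_\infty)\simeq\Alb(D)(K)$ that the paper carries out. ``Treated in the same spirit'' does not close this. So the proposal is a correct outline with the central construction unproved and one of its proposed repairs (vertical components) incorrect as stated.
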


\begin{proof}
Let $l/k$ be a finite Galois extension with Galois group
$G$. Consider the 
morphism of $1$-motives 
$$\alpha'=\alpha\otimes \Z_{tr}(l):A(C\otimes_kl,Z\otimes_kl)\simeq A(C,Z)\otimes \Z_{tr}(l) 
\to M\otimes \Z_{tr}(l).$$
Clearly, this is a $G$-equivariant morphism. It follows that 
$G$ acts on $K(C\otimes_kl,Z\otimes_kl,\alpha')$ and that
the space of $G$-invariants identifies 
with $K(C,Z,\alpha)$. 
Thus, it is enough to prove 
Proposition
\ref{key-propose} for $\alpha'$. Using this, we may assume that 
both projections 
$C\to \pi_0(C)$ and $D\to \pi_0(D)$ have sections.

The section of $C\to \pi_0(C)$ is used to construct a
retraction 
$r:\Alb(C)\to \Alb^0(C)$ to the natural inclusion. 
(This is needed in \eqref{psi-below}
below.)

Let $h_0(D)$ the homotopy invariant preasheaf with transfers
(on the category $Sm_k$ of smooth $k$-schemes) 
associated to $\Z_{tr}(D)$.
There is an obvious morphism 
of homotopy invariant presheaves with transfers
\begin{equation}
\label{eq:Pic-R-vs-S-A}
h_0(D)\to \Alb(D).
\end{equation}
The section 
$D\to \pi_0(D)$ is used to ensure that 
\eqref{eq:Pic-R-vs-S-A}
induces an isomorphism on finitely generated extensions $K/k$. 
Indeed, to check this property, we may assume without loss of generality 
that $k=K$, \ie it is enough to check that 
$h_0(D)(k)\to \Alb(D)(k)$ is an isomorphism.
The group
$h_0(D)(k)$ is canonically isomorphic 
to the relative Picard group 
$Pic(\overline{D},D_{\infty})$
where $\overline{D}$ is a smooth compactification of $D$ and 
$D_{\infty}=\overline{D}\smallsetminus D$. 
Using the exact sequences
$$0\to \frac{\mathcal{O}^{\times}(D_{\infty})}{\mathcal{O}^{\times}(\pi_0(D))} \to Pic(\overline{D},D_{\infty}) \to Pic(\overline{D}) \to 0$$
and 
$$0\to \frac{\mathcal{O}^{\times}(D_{\infty})}{\mathcal{O}^{\times}(\pi_0(D))} \to \Alb(D)(k) \to \Alb(\overline{D})(k)$$
it is enough to show that 
$Pic(\overline{D})\to \Alb(\overline{D})(k)$ is surjective. 
Clearly, $\Alb(\overline{D})$ is equal to 
the Picard variety $\Pic(\overline{D})$ of $\overline{D}$. 
Moreover, we have the well-known exact sequence
(see for example \cite[p.~203]{BLR}):
$$Pic(\overline{D})\to \Pic(\overline{D})(k) \to 
H^2_{\et}(\pi_0(\overline{D}),\G_m)\to H^2_{\et}(\overline{D},\G_m).$$
The existence of a section to $D\to \pi_0(D)$
implies that the last morphism is injective. 
Thus, the first morphism is surjective as needed.

Applying \cite[Proposition 11.1]{MVW} 
to the kernel and cokernel of 
\eqref{eq:Pic-R-vs-S-A} and using the previous discussion, 
we deduce 
an isomorphism 
\begin{equation}
\label{eq:Pic-R-vs-S-A-2}
h_0(D)(C_Z)\simeq \Alb(D)(C_Z)
\end{equation}
where $C_Z$ is the spectrum of the semi-local ring 
of $C$ at the points of $Z$.
Now, the left hand side in 
\eqref{eq:Pic-R-vs-S-A-2}
is the group of finite correspondences 
from $C_Z$ to $D$ up to homotopy. 
Taking the inverse image of the element 
$\psi\in \Alb(D)(C_Z)$ given by the composition
\begin{equation}
\label{psi-below}
\psi:C_Z\hookrightarrow C\to \Alb(C) \overset{r}{\twoheadrightarrow} \Alb^0(C) \overset{\alpha_1\,}{\to} \Alb^0(D)
\hookrightarrow \Alb(D)
\end{equation}
we arrive at the following conclusion.
There exists a dense open neighborhood $C'$ of $Z$ in $C$ and a finite correspondence 
$\gamma\in Cor(C',D)$ such that the 
following diagram commutes
$$\xymatrix{\Alb^0(C') \ar[r] \ar@/_/[dr]_-{\Alb^0(\gamma)} & \Alb^0(C) \ar[d]^-{\alpha_1}\\
& \Alb^0(D).\!}$$
By Lemma \ref{lemma-key-prop-2}, we may replace $C$ by $C'$. In other words, we may assume that $\alpha_1$ itself is induced by a correspondence 
$\gamma\in Cor(C,D)$.

To finish the proof, we choose a finite cover $r:C'' \to C$ such that 
$\gamma\circ r$ is a linear combination of morphisms. Using Lemma
\ref{lemma-key-prop-2}, we may replace $C$ by $C''$ and $Z$ by $r^{-1}(Z)$. In particular, we may indeed assume that
$\gamma=\sum_{i=1}^n a_i f_i$ where $a_i\in \Z$ and
$f_i:C \to D$.
\end{proof}

\subsection{Proof of Proposition \ref{key-propose}}\label{proof-key-prop} We are now ready to complete the proof of Proposition \ref{key-propose}. 
First, remark that we may assume that 
$\mathcal{F} \to \Alb^0(D)$ is injective. Indeed,
if $\mathcal{N}$ is the kernel of this morphism and $\mathcal{I}$ its image, there is a (non canonical) decomposition 
$$M=[\mathcal{N}\to 0]\oplus [\mathcal{I}\hookrightarrow \Alb^0(D)]$$
in $\mathcal{M}_1^{\Q}$. We then apply 
Lemmas
\ref{lemma-key-prop-1.5}
and \ref{lemma-key-prop-4} (for the $0$-motive $\cN$)
to conclude.

Arguing as in the beginning of the proof of 
Lemma 
\ref{lemma-key-prop-6}, 
we may replace $(C,Z,1)$ by $(C\otimes_kl,Z\otimes_kl,1)$ and 
$M$ by $M\otimes\Z_{tr}(l)$ for any 
finite Galois extension $l/k$. 
Therefore, we may assume that there exists 
such $l/k$, with Galois group $G$, such that the following properties
are satisfied:
\begin{itemize}

\item $\mathcal{F}=\bigoplus_{s=1}^r \Z_{tr}(l) e_s$
where $e_s\in \mathcal{F}(l)$ form a basis of the 
$\Z[G]$-module $\mathcal{F}(l)$.

\item The image of $e_s$ in $\Alb^0(D)(l)$ is represented
by a $0$-cycle $g_s$ of $D\otimes_kl$.

\end{itemize}

Let $T\subset D$ be a finite set of closed points
containing the supports of the $0$-cycles $g_s$'s 
and $\bigcup_{i=1}^n f_i(Z)$.
There is a morphism of $1$-motives
$$\delta:M \to A(D,T).$$
which is the identity on $\Alb^0(D)$. The 
induced morphism on lattices
$\mathcal{F}=\bigoplus_{s=1}^r\Z_{tr}(l)e_s
\to {\rm Div}^0_T(D)$
sends $e_s$ to the $0$-cycle $g_s$.
Also, the arrows 
$f_i:(C,Z,1) \to (D,T,1)$ in ${}^D(Crv_k)$ induces 
a morphism of $1$-motives
$$\gamma=\sum_{i=1}^n A(f_i):A(C,Z) \to A(D,T).$$
However, the triangle 
$$\xymatrix@C=1.7pc@R=1.7pc{A(C,Z) \ar[r]^-{\alpha} 
\ar@/_/[rd]^-{\gamma} & M \ar[d]^-{\delta} \\
& A(D,T)}$$
is not necessarily commutative. Let 
$\epsilon\df\gamma-\delta\circ \alpha$.
This is a morphism of $1$-motives 
such that the component $\epsilon_1:\Alb^0(C) \to \Alb^0(D)$ is zero.

In $\mathcal{M}_1^{\Q}$, we may decompose $\Alb(C,Z)=\mathcal{I}\oplus N$ 
where $N$ is a $1$-motive 
$[\mathcal{L}\hookrightarrow \Alb^0(C)]$
given by an injective morphism of group schemes. From our assumption on $M$, 
we have $\mathcal{I}\subset \ker(\alpha)$
and $\ker(\alpha|_N)$
is of the form $[\mathcal{L}\cap \mathcal{G} \hookrightarrow
\mathcal{G}]$ where $\mathcal{G}=\ker\{\alpha_1:\Alb^0(C) \to \Alb^0(D)\}$. It follows that 
\begin{equation}\label{eq:ker-gamma-N-alpha-N}
\ker(\gamma|_N)\subset \ker(\alpha|_N).
\end{equation}
Indeed, both $1$-motives in \eqref{eq:ker-gamma-N-alpha-N}
have the same semi-abelian part. 

Now, consider the sub-$1$-motive $\mathcal{T}\subset A(D,T)$ 
given by 
$$\gamma(\ker(\alpha|_N))=\epsilon(\ker(\alpha|_N)).$$
As $\epsilon$ is zero on the semi-abelian part, 
we see that $\mathcal{T}$ is a lattice. Also, using
\eqref{eq:ker-gamma-N-alpha-N},
we get 
$$\ker(\alpha|_N)=(\gamma|_{N})^{-1}(\mathcal{T}).$$
It follows that 
\begin{equation}\label{ker-alpha-formula}
\ker(\alpha)=\mathcal{I}+\gamma^{-1}(\mathcal{T}).
\end{equation}
Thus, we are left to show that $T_{\Q}(\mathcal{I})$ and
$T_{\Q}(\gamma^{-1}(\mathcal{T}))$
are sub-$\cR_1$-modules of $T_{\Q}(A(C,Z))$.

For $T_{\Q}(\mathcal{I})$, this follows from 
Lemma \ref{lemma-key-prop-5}.
For the second one, remark that 
$T_{\Q}(\gamma^{-1}(\mathcal{T}))$ is nothing but the inverse image of 
$T_{\Q}(\mathcal{T})\subset T_{\Q}(A(D,T))$ by the map
$$\sum_{i=1}^n a_if_{i*}: H_1(C,Z;\Q) \to 
H_1(D,T;\Q).$$
The latter being a morphism of $\cR_1$-modules, we may again apply 
Lemma \ref{lemma-key-prop-5}
to conclude.

\section{Nori versus Deligne $1$-motives}

Using \eqref{compare-M0s} and Theorem
\ref{thm:main-thm}, we define a 
functor from Deligne $1$-motives to Nori $1$-motives:
\begin{equation}\label{eqn:cM1-to-EHM1}
\nu_1:{}^t\cM_1\simeq \M_1''\to \M_1.
\end{equation}
The main result of the paper is:

\begin{thm}\label{Del=Nori}
The functor $\nu_1:{}^t\cM_1
\to \M_1$ is an equivalence of categories.
\end{thm}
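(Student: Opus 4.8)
The plan is to deduce Theorem \ref{Del=Nori} from Theorem \ref{thm:main-thm} by a "sandwiching" argument. We already have the chain of faithful exact functors \eqref{compare-M0s}, namely $\M_1''^R \to \M_1'^R \to \M_1^R$, together with the equivalence $\M_1'' \simeq {}^t\cM_1$ of Theorem \ref{thm:main-thm}. Thus it suffices to show that the composite functor $\nu_1: {}^t\cM_1 \simeq \M_1'' \to \M_1$ is essentially surjective; full faithfulness being automatic once we know $\M_1''\simeq {}^t\cM_1$, since the displayed functors are faithful and exact, and an essentially surjective faithful exact functor between abelian categories that is full is an equivalence. Actually the cleanest route: since $\nu_1$ is faithful and exact, to prove it is an equivalence it is enough to prove (i) $\nu_1$ is essentially surjective and (ii) $\nu_1$ is full. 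I would first reduce (ii) to (i) by a standard argument: if $\nu_1$ is essentially surjective, then every object and every morphism of $\M_1$ comes from ${}^t\cM_1$ up to the equivalence, and one checks fullness on generators using that the functor respects sub-quotients.

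\textbf{Key steps.} First, I would show that the generators of $\M_1$ lie in the essential image of $\nu_1$. By Proposition \ref{generate}, $\M_1$ is the thick abelian subcategory of $\M$ generated by $\widetilde{H}_i(X,Y;\Z)$ with $\dim X \le 1$ and $i \le 1$; and the case $i \le 0$ is covered by the Artin-motive computation (Proposition \ref{N0}, together with the observation that $\M_0 \subset \M_1$ corresponds to lattices $[\cF\to 0]$ inside ${}^t\cM_1$). So the real content is $i=1$ and $\dim X = 1$, i.e.\ objects $\widetilde{H}_1(X,Y;\Z)$ for a (possibly singular, possibly non-affine) curve $X$ and a closed subset $Y$. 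I would reduce these to the smooth affine case handled by ${}^D(Crv_k)$: using normalization, blow-ups, and the long exact sequences relating $\widetilde{H}_1$ of a curve, its normalization, its singular/added points, and a smooth affine model — exactly the kind of dévissage already used in the proof of Proposition \ref{generate} — one expresses $\widetilde{H}_1(X,Y;\Z)$ as a sub-quotient of objects of the form $\widetilde{H}_1(C,Z;\Z)$ with $(C,Z,1) \in {}^D(Crv_k)$, plus $0$-motive contributions. Each such $\widetilde{H}_1(C,Z;\Z)$ is $\nu_1(A(C,Z))$ by construction of $\nu_1$ via \eqref{compare-M0s}. Since ${}^t\cM_1$ is an abelian category and $\nu_1$ is exact, the essential image of $\nu_1$ is closed under sub-quotients and extensions, hence contains the thick abelian subcategory generated by these objects, which is all of $\M_1$. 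This gives essential surjectivity.

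\textbf{Fullness.} For fullness, I would exploit that, by Theorem \ref{thm:main-thm}, $\Hom_{{}^t\cM_1}(M,N) = \Hom_{\M_1''}(M,N)$, and the functor $\M_1'' \to \M_1$ factors through $\M_1'$. So it suffices to show $\Hom_{\M_1''^R}(P,Q) \to \Hom_{\M_1^R}(P,Q)$ is surjective for $P = \widetilde{H}_1''(C,Z)$, $Q=\widetilde{H}_1''(C',Z')$ generators; and more generally, since both sides are $R$-linear and every object is a quotient of a sum of generators, a diagram chase using exactness reduces the general case to the generator case. Here the point is that a morphism in $\M_1$ between images of curves is, by Nori's construction, induced on the level of the diagram ${}^D(Sch_k)$ by a formal combination of arrows; since $\M_1$ is generated using only dimension $\le 1$ (Proposition \ref{generate}), these arrows can be taken inside the curve diagram up to the usual sub-quotient manipulations, and the corresponding morphism already exists in $\M_1''$. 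A cleaner formulation: apply Proposition \ref{prop:criterion-equi} (or its refinement \ref{refine-criterion}) directly to the representation $A : {}^D(Sch_k)_{\le 1} \to {}^t\cM_1$ extended from ${}^D(Crv_k)$ — conditions (a) and (b) are the disjoint-union and generation statements, already verified for the curve diagram and extended by Proposition \ref{generate}, while condition (c) for the larger diagram follows from Proposition \ref{key-propose} by the same dévissage (normalization and resolution of curve singularities) that proves Proposition \ref{generate}.

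\textbf{Main obstacle.} The hard part will be the fullness/condition-(c) step for the enlarged diagram, i.e.\ controlling morphisms $\widetilde{H}_1(X,Y;\Z) \to \widetilde{H}_1(X',Y';\Z)$ in $\M_1$ when $X,X'$ are arbitrary curves and pairs, reducing them to the smooth affine setting without losing track of the torsion (since we are working with $\Z$-coefficients, not $\Q$-coefficients, whereas Proposition \ref{key-propose} is stated over $\Q$). I expect this requires care: one must first handle the $\Q$-coefficient statement via Proposition \ref{key-propose} and Proposition \ref{refine-criterion} applied to the torsion-free representation $H_* : {}^D(Crv_k) \to \Zmod$ — noting $H_1(C,Z;\Z)$ is torsion-free for $(C,Z,1)\in{}^D(Crv_k)$ — then bootstrap to integral coefficients using that ${}^t\cM_1$ already incorporates torsion lattices and that the comparison $\M_1'' \to \M_1$ is faithful and exact, so injectivity on $\Hom$'s plus surjectivity after $\otimes\Q$ plus exact control of torsion sub-objects (which are $0$-motives, handled by Proposition \ref{N0}) forces surjectivity integrally.
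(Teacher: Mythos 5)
There is a genuine gap, and it sits exactly where the paper has to work hardest. Your argument for essential surjectivity rests on the claim that, since $\nu_1$ is exact and ${}^t\cM_1$ is abelian, ``the essential image of $\nu_1$ is closed under sub-quotients and extensions'' in $\M_1$. This does not follow from exactness: a sub-object of $\nu_1(M)$ taken in the ambient category $\M$ has no a priori reason to be of the form $\nu_1(M')$, and an extension of $\nu_1(M'')$ by $\nu_1(M')$ in $\M$ has no a priori reason to come from an extension of $1$-motives (compare the inclusion of $\F_p$-vector spaces into finite abelian $p$-groups, which is exact and fully faithful but whose image is not extension-closed). Since $\M_1$ is \emph{defined} as the thick abelian subcategory of $\M$ generated by the relevant $\widetilde{H}_i$, proving that the essential image of $\nu$ is thick is precisely the content of the theorem (Lemma \ref{lem:iff-cond-equiv}), not a formal consequence. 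In the paper, stability under sub-quotients is Proposition \ref{prop:cM1-st-squo} and Corollary \ref{cor:imv-st-squot} (a three-step descent through $\C$, $\bar k$, $k$ using the fully faithful realization $T:{}^t\cM_1\to\cM\cR^{\sigma}$ and the classification of mixed Hodge structures of type $\{(0,0),(0,-1),(-1,0),(-1,-1)\}$), and stability under extensions is the effective case of Deligne's conjecture (Theorem \ref{thm:eff-Del-conj}), which requires the whole apparatus of Voevodsky motives, $\LAlb$, and the comparison of Hodge realizations (Propositions \ref{new-Volog} and \ref{First-Del-Conj}). None of this machinery appears in your proposal, and the dévissage of Lemma \ref{red:to-Del-ext} handling torsion is also needed.

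Your treatment of fullness has a second gap. A faithful exact essentially surjective functor between abelian categories need not be full (consider the forgetful functor from finite-dimensional $k[x]$-modules to $k$-vector spaces), so fullness is not ``automatic'' and cannot be reduced to essential surjectivity by a diagram chase on generators. The paper obtains fullness of $\nu$ from the commutative square of Proposition \ref{M-dprime-M}: the composite ${}^t\cM_1\to\cM\cR^{\sigma}$ is fully faithful (Proposition \ref{1mr}, itself a nontrivial descent argument using the Hochschild--Serre spectral sequence and the embedding into $\DM^{\et}_{\eff}$), and $\widetilde R:\M\to\cM\cR^{\sigma}$ is faithful. Finally, your alternative of applying Proposition \ref{prop:criterion-equi} or \ref{refine-criterion} to the diagram ${}^D(Sch_k)_{\le 1}$ would, at best, identify ${}^t\cM_1$ with $\M_1'$, the universal category of the truncated diagram --- but the comparison $\M_1'\to\M_1$ is explicitly left as a conjecture in the paper, so this route does not reach $\M_1$ either. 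The parts of your proposal that do work --- the reduction to curves via Proposition \ref{generate} and the dévissage from arbitrary one-dimensional pairs to smooth affine ones --- correspond to Corollary \ref{cor:iff-cond-equiv}, which only establishes that $\M_1$ is \emph{generated} by the image of $\nu$.
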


The goal of this section is to show that the functor $\nu_1$ in 
\eqref{eqn:cM1-to-EHM1}
is fully faithful. In order to do this, 
we use that 
${}^t\cM_1$ embeds fully faithfully inside 
the category of mixed realizations.
Then, we reduce Theorem
\ref{Del=Nori} to showing that 
the essential image of ${}^t\cM_1$ is thick
in $\M$ (see the key Lemma~\ref{lem:iff-cond-equiv} below).
The proof of the latter property will be the subject of the next sections (completed in 
Section~\ref{last-sect-Deligne}).

\subsection{Mixed realisations}  Fix an embedding 
$\sigma:k\hookrightarrow \C$. 
We consider a variant, which we denote $\cM\cR^{\sigma}(k)$, 
of the category of mixed 
realisations (see \cite{DE} and \cf \cite{HU}) 
where, roughly speaking, we only retain the 
Betti component corresponding to $\sigma$, 
the de Rham component, and the 
$\ell$-adic components corresponding to the 
algebraic closure of $\sigma(k)$ in $\C$.
More specifically, an objet of $\cM\cR^{\sigma}(k)$ 
is a tuple 
$M\df (M_{\rm B}, M_{\rm dR}, M_{\ell},\dots)$ 
consisting of:
\begin{itemize}

\item a finitely generated abelian group
$M_{\rm B}$ together with an increasing filtration $W_{\d}$
on $M_{\rm B}\otimes \Q$, 
called the weight filtration,

\item a finitely generated $k$-vector space $M_{\rm dR}$
together with a decreasing filtration 
$F^{\d}$, called the Hodge filtration,

\item for every prime $\ell$, a finitely generated 
$\Z_{\ell}$-module $M_{\ell}$ together with a continuous action 
of the Galois group of $\bar{k}/k$, where $\bar{k}\subset \C$ 
is the algebraic closure of $k$ in $\C$, 

\item a comparison isomorphism 
$M_{\rm B}\otimes\C\simeq M_{\rm dR}\otimes_k\C$ such
that $(M_{\rm B},M_{\rm dR}\otimes_k\C,W_{\d},F^{\d})$
is a polarizable mixed Hodge structure,

\item for every prime $\ell$, 
a comparison isomorphism 
$M_{\rm B}\otimes\Z_{\ell}\simeq M_{\ell}$.

\end{itemize}
It is known that 
$\cM\cR^{\sigma}(k)$ is an abelian category (see \cite{DE} and \cf \cite{HU});
this is actually an easy consequence of the fact that 
mixed Hodge structures form an abelian category.

The following two simple remarks are useful. 

\begin{remark}
\label{rem:part-proj}
Projections yield functors from 
$\cM\cR^{\sigma}(k)$
to $\Zmod$ as well as ${\sf MHS} =\{\text{polarizable mixed Hodge structures}\}$
and $G_{k}-{\sf Rep}_{\ell}=
\{\ell-\text{adic Galois representations}\}$
where $G_k = \Gal(\bar{k}/k)$. The first two functors are faithful.
The third one is faithful up to $\ell'$-torsion.
\end{remark} 

\begin{remark}\label{rem:MR-base-change}
Given an extension $k'/k$ and 
a complex embedding $\sigma':k'\hookrightarrow k$ extending 
$\sigma$, one has a base-change functor 
$$-\otimes_kk':\cM\cR^{\sigma}(k)\to \cM\cR^{\sigma'}(k')$$
which is also faithful. If $M$ is a mixed realisation over $k$, then 
$M\otimes_kk'$ is simply given by
$$(M_{\rm B},M_{\rm dR}\otimes_kk', M_{\ell},\cdots)$$
where the action of $G_{k'}$ on $M_{\ell}$ is deduced from the 
action of $G_k$ by 
restricting along the canonical morphism
$G_{k'}\to G_k$.   
\end{remark}

The following is a variant of \cite[2.2 \& 2.3]{DE} for $1$-motives with torsion.

\begin{propose} \label{1mr}
Considering $T\df (T_{\Z}, T_{\rm dR}, T_{\ell}, \dots)$ where 
$T_{\Z}$ denotes the Betti realisation, $T_{\rm dR}$ the de Rham realisation, $T_{\ell}$ the $\ell$-adic realisation, etc.,  of $1$-motives with torsion, we obtain a functor 
\begin{equation}\label{1mrf}
T:{}^t\cM_1(k)\to \cM\cR^{\sigma}(k)
\end{equation} 
which is exact and fully faithful.
\end{propose}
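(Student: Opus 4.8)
The plan is to reduce the claim about $1$-motives with torsion to the already-known classical case of Deligne, treated in \cite[2.2 \& 2.3]{DE}, by resolving torsion. First I would recall that every $1$-motive with torsion $M=[\cF\to\cG]$ fits into an exact sequence in ${}^t\cM_1$ relating it to a torsion-free $1$-motive: writing $\cF$ as an extension of a finite group scheme by a torsion-free lattice, one gets a two-term filtration whose associated pieces are a torsion-free (i.e.\ classical Deligne) $1$-motive and a $0$-motive (an Artin motive placed in the appropriate position). Since the realization functor $T$ on honest Deligne $1$-motives is exact and fully faithful by \cite{DE}, and since on $0$-motives the realization is visibly exact and fully faithful (the Betti, de Rham and $\ell$-adic pieces just record the underlying Galois module with trivial weight and Hodge data), the strategy is to glue these two facts.

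The key steps, in order, would be: (1) Verify that $T$ is well-defined on ${}^t\cM_1$, i.e.\ that the Betti lattice $T_\Z([\cF\to\cG])=\cF(\C)\times_{\cG(\C)}\Lie\,\cG(\C)$ carries a natural weight filtration, that the de Rham piece (the Lie algebra of the universal vectorial extension, suitably modified by the torsion lattice) carries a Hodge filtration, that the $\ell$-adic piece is the Tate module construction on the torsion-modified object, and that all comparison isomorphisms are present; this is routine bookkeeping generalizing \cite[\S2]{DE}. (2) Prove exactness: a short exact sequence $0\to M'\to M\to M''\to 0$ in ${}^t\cM_1$ can be checked on each realization component separately; the Betti component is exact because $-\times_{\cG(\C)}\Lie\,\cG(\C)$ is exact on the relevant complexes of commutative group schemes (the semi-abelian part contributes a surjection with lattice kernel, fitting into the standard four-term exactness), and the de Rham and $\ell$-adic components inherit exactness from the classical case plus the elementary $0$-motive case via the torsion filtration. (3) Prove full faithfulness: given $M_1,M_2\in{}^t\cM_1$, filter both by their maximal torsion-free sub-$1$-motive and the $0$-motive quotient; a morphism of realizations $T(M_1)\to T(M_2)$ respects the weight filtration, hence induces compatible morphisms on the graded pieces, which by the classical Deligne result and the $0$-motive case come from unique morphisms of $1$-motives with torsion; one then checks these assemble to a genuine morphism $M_1\to M_2$ (the only subtlety being the extension class, which is pinned down by the mixed Hodge structure exactly as in \cite{DE}).

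The main obstacle I expect is step (3), specifically handling the torsion interaction in full faithfulness: the realization functor on the $\ell$-adic side is only faithful up to $\ell'$-torsion (Remark~\ref{rem:part-proj}), so one cannot reconstruct the torsion part of a morphism from a single $\ell$-adic component alone — instead one must use the Betti component, which genuinely sees all torsion, together with the comparison isomorphisms $M_{\rm B}\otimes\Z_\ell\simeq M_\ell$ to propagate the reconstructed morphism consistently. Concretely, a morphism $[\cF_1\to\cG_1]\to[\cF_2\to\cG_2]$ is determined by its effect on $\cF_i$ and $\cG_i$; the effect on $\cG_i$ (semi-abelian, hence torsion-free up to isogeny issues that $\Z$-coefficients make delicate) is recovered from the weight-graded abelian/toric pieces of the realization by classical arguments, while the effect on the lattice-with-torsion $\cF_i$ is recovered precisely from $M_{\rm B}$. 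Once these are shown compatible with the maps to $\cG_i$ — which is a diagram chase using that $T_\Z$ is a fiber product over $\cG(\C)$ — full faithfulness follows. I would also remark that the essential image is not claimed here, only full faithfulness, so no descent-to-geometry input is needed at this stage.
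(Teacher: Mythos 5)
Your reduction to the torsion-free case misses the two points that carry the actual weight of the paper's proof. First, the descent over the base field. Hodge theory (via \cite[Proposition 1.5]{BRS}) gives full faithfulness only over $\C$; over a general $k$ one must show that a morphism of mixed realizations, which a priori only produces a morphism of $1$-motives over $\C$, is defined over $k$. This requires (i) full faithfulness of the base change ${}^t\cM_1(\bar k)\to{}^t\cM_1(\C)$, (ii) $G_k$-equivariance of the resulting morphism, read off from the $\ell$-adic components through the comparison isomorphisms, and (iii) the identification $\Hom_{{}^t\cM_1}(M,M')\simeq\Hom_{{}^t\cM_1}(M\otimes_k\bar k,M'\otimes_k\bar k)^{G_k}$, which the paper establishes via the embedding $\Tot$ into $\DM^{\et}_{\eff}$ and the Hochschild--Serre spectral sequence. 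Your proposal never confronts this passage from $\C$ to $k$: it is silently delegated to the citation of \cite[2.2 \& 2.3]{DE}, but the paper reproves even the torsion-free statement precisely because this descent is the nontrivial content, and in any case you would have to redo it for the torsion-carrying pieces of your filtration.

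Second, the dévissage itself does not behave as you claim. There is no functorial splitting of a $1$-motive with torsion into a torsion-free $1$-motive and a finite piece, and a morphism of realizations need not respect a chosen such filtration: for instance the reduction map realizes a nonzero morphism from $T([\Z\to 0])$ to $T([\Z/2\to 0])$, so the image of $T(M_1')$ (your torsion-free sub-$1$-motive of $M_1$) under a given $\phi\colon T(M_1)\to T(M_2)$ need not land in $T(M_2')$; dually, lifting $\phi$ through a torsion resolution $\widetilde{M}_2\twoheadrightarrow M_2$ meets an extension obstruction you do not address. So ``compatible morphisms on the graded pieces assemble'' is asserting the hard point rather than proving it. The paper sidesteps this entirely: over $\C$ it invokes the equivalence of ${}^t\cM_1(\C)$ with the full subcategory of $\mathsf{MHS}$ (with integral, possibly torsion, lattices) of the appropriate type, which handles torsion intrinsically rather than by reduction to the torsion-free case; only the field descent then remains. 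Your treatment of exactness and of faithfulness via the Betti component is fine, and your observation about the $0$-motive case using all primes $\ell$ jointly is correct, but fullness over $k$ is where the proposal has a genuine gap.
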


\begin{proof}
We split the proof in three steps.

\smallskip

\noindent
\emph{Step 1:}
If $k=\C$, it is well-known \cite[Proposition 1.5]{BRS} that the composition 
$${}^t\cM_1(\C)\to \cM\cR^{\rm id}(\C)\to \mathsf{MHS}$$
is fully faithful.
From Remark
\ref{rem:part-proj},
we know that the second functor is faithful. 
This implies that the first functor is 
fully faithful.  

\smallskip

\noindent
\emph{Step 2:} 
If $k=\bar k$ is algebraically closed, 
the base change functor 
$$-\otimes_k\C: {}^t\cM_1(k)\to {}^t\cM_1(\C)$$
is fully faithful. (This easily follows from the case of lattices with torsion and semi-abelian varieties.) From Step 1, we deduce 
that the composition of
$${}^t\cM_1(k)\to {}^t\cM_1(\C) \to \cM\cR^{\rm id}(\C)$$
which is also the composition of
$${}^t\cM_1(k)\to \cM\cR^{\sigma}(k) \to \cM\cR^{\rm id}(\C)$$
is fully faithful. 
Now, by Remark
\ref{rem:MR-base-change}, the functor 
$$-\otimes_k\C: \cM\cR^{\sigma}(k)\to \cM\cR^{\rm id}(\C)$$ 
is faithful (in fact, it is also full, but we don't need to know this). This implies that 
${}^t\cM_1(k)\to \cM\cR^{\sigma}(k)$ is fully faithful.

\smallskip

\noindent
\emph{Step 3:}
We now consider the general case. Let $\bar{k}$ 
be the algebraic closure of $k$ in $\C$
and denote $G_k = \Gal(\bar{k}/k)$ the absolute Galois group.
Fix two $1$-motives $M$ and $M'$ over $k$
and denote $N$ and $N'$ their mixed realisations. 
Consider the following commutative diagram
$$\xymatrix{\Hom_{\,{^t\cM}_1}(M,M') \ar[r] \ar[d]& \Hom_{\,{^t\cM}_1}(M\otimes_k\bar{k},M'\otimes_k\bar{k})\ar[d]^-{\sim} \\
\Hom_{\cM\cR^{\sigma}}(N,N') \ar[r] \ar@{^(->}[d] & \Hom_{\cM\cR^{\sigma}}(N\otimes_k \bar{k},N'\otimes_k \bar{k})\ar@{^(->}[d] \\
\prod_{\ell}\Hom_{G_k}(N_{\ell},N'_{\ell})\ar[r] & 
\prod_{\ell}\Hom(N_{\ell},N'_{\ell})}$$
where the two lower vertical arrows are injective and 
the first vertical arrow on the right is invertible. 
Using a diagram chasing, it is enough 
to show that the commutative square
$$\xymatrix{
\Hom_{\,{^t\cM}_1}(M,M') \ar[r] \ar[d]& \Hom_{\,{^t\cM}_1}(M\otimes_k\bar{k},M'\otimes_k\bar{k})\ar@{^(->}[d]\\
\prod_{\ell}\Hom_{G_k}(N_{\ell},N'_{\ell})\ar[r] & 
\prod_{\ell}\Hom(N_{\ell},N'_{\ell})}$$
is cartesian.
The group $G_k$ acts on
$\Hom_{\,{^t\cM}_1}(M\otimes_k\bar{k},M'\otimes_k\bar{k})$
and 
$\Hom(N_{\ell},N'_{\ell})$
and the vertical arrow on the right is 
$G_k$-equivariant. 
Moreover, we have
$$\Hom_{G_k}(N_{\ell},N'_{\ell})=\Hom(N_{\ell},N'_{\ell})^{G_k}.$$
Thus, we are reduced to showing that the natural 
mapping 
\begin{equation}
\label{1mginv}
\Hom_{\,{}^t\cM_1}(M,M') \to \Hom_{\,{}^t\cM_1}(M\otimes_k\bar{k},M'\otimes_k\bar{k})^{G_k}
\end{equation}
is a bijection.
This follows from the
Hoschschild-Serre spectral sequence.
Indeed, from \cite[Theorem 2.1.2]{BK}, 
one has a fully faithful embedding 
$$\Tot:{\bf D}^b({}^t\cM_1(k))\to \DM_{\eff}^{\et}(k)$$ 
where $\DM_{\eff}^{\et}(k)$ is the full subcategory 
of ${\bf D}(Shv^{\et}_{tr}(k))$ given by $\Aff^1$-local objects
(\cf \S\ref{subsect:VoMo} \& Proposition
\ref{prop:orgo-embed} below for a more detailed discussion, 
but only with rational coefficients). 
Also, there is a similar functor for $\bar{k}$.
Therefore
$$\Hom_{\,{}^t\cM_1}(M,M')=\Hom_{\DM_{\eff}^{\et}}(\Tot(M),\Tot(M'))$$ 
and similarly after applying $-\otimes_k\bar{k}$.
Now in ${\bf D}(Shv^{\et}_{tr}(k))$ as well as in $\DM_{\eff}^{\et}(k)$ 
we have that 
$$\RHom(C,C')=R\Gamma(G_k,\RHom(C\otimes_k\bar{k},C'\otimes_k\bar{k}))$$
for objects $C, C'\in {\bf D}(Shv^{\et}_{tr}(k))$. 
For $C=\Tot (M)$ and $C'=\Tot (M')$, this gives
\eqref{1mginv}.
\end{proof}

\subsection{Mixed realisation of effective Nori motives}
Considering 
$$(X,Y, i)\leadsto R_i(X,Y)=(H_i^{\rm B}(X, Y), H_i^{\rm dR}(X, Y), H_{i}^{\ell}(X, Y),\dots)$$
given by singular homology, de Rham homology and $\ell$-adic 
homology, we get a representation 
$R:{}^D(Sch_k)\to \cM\cR^{\sigma}(k)$ 
which factors the representation 
\eqref{can-rep-sch-k}. By universality 
(\ie \cite[Theorem 41]{LV}), we obtain 
an exact faithful functor
\begin{equation}\label{RN}
\widetilde{R} : \M(k) \to \cM\cR^{\sigma}(k).
\end{equation}

\begin{lemma} \label{repalb} 
For $(C,Z,1)\in {}^D(Crv_k)$, there is a canonical isomorphism 
$T\circ A(C,Z) \simeq R(C,Z,1)$. In other words, 
the following square is commutative:
$$\xymatrix{{}^D(Crv_k) \ar[d] \ar[r]^-{A} \ar[d]
& {}^t\mathcal{M}_1\ar[d]^-{T}\\
{}^D(Sch_k) \ar[r]^{R} & \mathcal{MR}^{\sigma}.\!}$$
\end{lemma}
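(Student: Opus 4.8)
The plan is to produce, for each $(C,Z,1)\in{}^D(Crv_k)$, a natural identification of the three realisation components of $R(C,Z,1)$ with the corresponding components of $T\circ A(C,Z)$, and then check compatibility with comparison isomorphisms and weight/Hodge filtrations. Recall $A(C,Z)=[{\rm Div}^0_Z(C)\to\Alb^0(C)]$, and by Lemma \ref{HsimAcircB} the Betti component $T_{\Z}(A(C,Z))$ is already canonically isomorphic to $H_1(C^{\an},Z^{\an};\Z)=R_1^{\rm B}(C,Z)$; so that component is done. For the de Rham and $\ell$-adic components one invokes the standard fact that the $1$-motive $A(C,Z)$ \emph{computes} the first homology of the pair $(C,Z)$: this is exactly the content of Deligne's construction of the $1$-motive $H_1$ of a curve with its realisations (\cite{DE}, \S 10), extended to the torsion setting in \cite{BRS}, \cite{BK}. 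Concretely, $A(C,Z)\cong\LA{1}(C,Z)$ (as noted in the text), and the realisation functor $T$ on $\LA{1}(C,Z)$ reproduces $R_1(C,Z)$ by the comparison results already established for Voevodsky motives; so the cleanest route is to cite the description $A(C,Z)=\Alb^-(C,Z)$ from \cite{BS} together with \cite[Prop.\ 3.1.2 \& \S 5.3]{BS}, which give the de Rham and $\ell$-adic identifications exactly as Lemma \ref{HsimAcircB} gave the Betti one.

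First I would record the three component isomorphisms separately. For Betti, cite Lemma \ref{HsimAcircB}. For $\ell$-adic, use that $T_{\ell}(A(C,Z))$ is built from the $\ell$-adic Tate module of $\Alb^0(C)$ and the (torsion) lattice ${\rm Div}^0_Z(C)$, and identify this with $H_1^{\ell}(C,Z)=H_1^{\et}((C\otimes_k\bar k,Z\otimes_k\bar k);\Z_\ell)$ via the Kummer sequence and the Abel--Jacobi map; this is precisely \cite[2.3]{DE} in the smooth-curve case, adapted to torsion as in \cite{BRS}. For de Rham, use the identification of $M_{\rm dR}$ of a $1$-motive with $\Lie$ of the universal vectorial extension of $[\,{\rm Div}^0_Z(C)\to\Alb^0(C)\,]^{\natural}$, which matches $H_1^{\rm dR}(C,Z)$ with its Hodge filtration $F^0\supset F^1\supset 0$, again by Deligne's construction. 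Then I would check that under these identifications the period isomorphism $M_{\rm B}\otimes\C\simeq M_{\rm dR}\otimes_k\C$ of $T\circ A(C,Z)$ agrees with the algebraic de Rham comparison for $H_1(C,Z)$, and similarly $M_{\rm B}\otimes\Z_\ell\simeq M_\ell$ agrees with the étale comparison; these are built into the cited constructions. Finally, naturality in $(C,Z,1)$ — compatibility with both kinds of arrows in ${}^D(Crv_k)$, the morphisms $f\colon(C,Z,1)\to(C',Z',1)$ and (when relevant inside ${}^D(Sch_k)$) the boundary maps $\delta$ — follows because all the identifications are functorially constructed from Abel--Jacobi maps, and $H_*$, $R$, $A$, $T$ are all functorial.

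The one point requiring slight care is that ${}^D(Crv_k)$ sits inside ${}^D(Sch_k)$ and the commutativity of the square is asserted with $R$ being the full mixed-realisation representation on ${}^D(Sch_k)$ restricted along the inclusion; so I must make sure the component-wise isomorphisms assemble into a morphism \emph{in} $\cM\cR^{\sigma}(k)$, i.e.\ respect the weight filtration $W_\d$ as well. The weight filtration on $H_1(C,Z)$ has $W_{-2}=$ toric part, $W_{-1}=H_1$ of the smooth proper model (abelian part), $W_0=$ everything (with the lattice quotient), and this matches the weight filtration attached to the $1$-motive $A(C,Z)$ by Deligne's recipe; so the identification is filtered.

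The main obstacle I expect is purely bookkeeping rather than conceptual: assembling the de Rham side correctly with its Hodge filtration and period map, since $C$ is affine (so one is dealing with $H_1$ of an open curve relative to a finite set of points, and the de Rham realisation involves logarithmic forms and the universal vectorial extension), and making the functoriality precise for the base-point/section choices implicit in $\Alb^1(C)\hookrightarrow$ etc. But since all of this is already worked out in \cite{DE}, \cite{BRS}, \cite{BS} and \cite{BK}, the proof should consist essentially of citing \cite[Proposition 3.1.2 \& \S 5.3]{BS} (which packages exactly the compatible system of realisations of $\Alb^-(C,Z)=A(C,Z)$) and checking that the resulting natural isomorphism of realisation systems is the asserted one, exactly as in the proof of Lemma \ref{HsimAcircB}.
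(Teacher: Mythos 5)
Your proposal is correct and takes essentially the same route as the paper, whose proof simply cites Deligne's construction of the $1$-motive of a curve together with its compatible system of realisations and notes that one must dualize from cohomology to homology; your component-by-component unpacking (Betti via Lemma \ref{HsimAcircB}, $\ell$-adic via Tate modules and Abel--Jacobi, de Rham via the universal vectorial extension, plus the weight-filtration check) is exactly what that citation contains. The only slip is bibliographic: the construction you invoke is in \emph{Th\'eorie de Hodge~III}, \S 10.3 (the paper's \cite{HdgIII}), not in \cite{DE}.
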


\begin{proof} Since we deal with curves this is essentially due to Deligne \cite[\S 10.3]{HdgIII}. (Deligne deals with the cohomology of 
curves: one needs to dualize to get the statement we need.)
\end{proof} 
 
\begin{propose}\label{M-dprime-M}
The exact functor 
$\M''_1\to \M$ is fully faithful.
\end{propose}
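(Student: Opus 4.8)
The plan is to use the criterion of Proposition~\ref{prop:criterion-equi} applied to the representation $H_*:{}^D(Crv_k)\to\Zmod$, the abelian category $\cE=\M''_1$ with its forgetful functor, and the representation $S=\widetilde{H}''_1$ given by universality. Wait — that only proves $\M''_1$ is the universal category built from ${}^D(Crv_k)$, which is tautological. The real content is to compare $\M''_1$ with $\M$ using the \emph{larger} diagram ${}^D(Sch_k)$. So instead, I would apply Proposition~\ref{prop:criterion-equi} (or rather verify its hypotheses are inherited) to show that the composite functor $\M''_1\to\M$ is fully faithful. Concretely, set $D={}^D(Sch_k)$, $T=H_*$, and take $\cE$ to be the thick abelian subcategory of $\M$ generated by the image of ${}^D(Crv_k)$ — this is exactly the essential image of $\M''_1\to\M$ once we know the functor is an embedding. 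The functor $U:\cC(H_*|_{{}^D(Crv_k)})=\M''_1\to\cE\hookrightarrow\M$ is faithful and exact by universality; we must upgrade faithfulness to fullness.

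First I would show that every object of $\cE$ is a subquotient of $\bigoplus_j \widetilde H_{i_j}(X_j,Y_j;\Z)$ with each $(X_j,Y_j,i_j)\in{}^D(Crv_k)$ — actually, by Proposition~\ref{generate} and its proof (Lefschetz-type reduction), every generator $\widetilde H_i(X,Y;\Z)$ with $i\le 1$ lies in the thick subcategory generated by curves, so $\cE$ is generated under subquotients and extensions by $\widetilde H''_1(C,Z;\Z)$'s together with $\widetilde H_0$'s of zero-dimensional schemes, the latter being handled by Proposition~\ref{N0}. The key point is then: for a morphism $f:\widetilde H''_1(C,Z;\Z)\to\widetilde H''_1(C',Z';\Z)$ \emph{inside $\M$} (i.e.\ an element of $\Hom_\M$), I must produce a preimage in $\M''_1$. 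By the concrete description of $\M$ as a $2$-colimit over finite subdiagrams $E$ of $\End(H_*|_E)\text{-}\mathsf{mod}$, such an $f$ is, for $E$ large enough, an $\End(H_*|_E)$-linear map $H_1(C^{\an},Z^{\an};\Z)\to H_1(C'^{\an},Z'^{\an};\Z)$. We want it to be $\End(H_*|_{E'})$-linear for some finite $E'$ \emph{contained in} ${}^D(Crv_k)$. This is where I would invoke Lemma~\ref{lem:S-p-p-prime} together with condition (c) of Proposition~\ref{prop:criterion-equi}: the hypotheses (a), (b), (c) are \emph{not} automatic for the curve-diagram, but (a) holds by taking disjoint unions of curves, (b) holds by the generation statement above, and (c) — that kernels of maps out of $H_1(C,Z)$ in $\M$ are recognized by a finite subdiagram of curves — is the crucial input.

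The main obstacle, therefore, is precisely establishing condition (c): given $S(p)=\widetilde H''_1(C,Z;\Z)\to A$ a map in $\cE\subset\M$, to find a finite subdiagram $E\subset{}^D(Crv_k)$ containing $(C,Z,1)$ such that $\ker\{H_1(C,Z;\Z)\to G(A)\}$ is a sub-$\End(H_*|_E)$-module. Here is where I would pass to $\Q$-coefficients and use the already-proven Theorem~\ref{thm:main-thm}: over $\Q$, $\M''_1\simeq {}^t\cM_1\otimes\Q$, so any object $A$ arising as a quotient of $S(p)$ in the essential image corresponds to a $1$-motive $M$, the map $S(p)\to A$ corresponds to $\alpha:A(C,Z)\to M$, and Proposition~\ref{key-propose} says exactly that $K(C,Z,\alpha)$ is a sub-$\cR_1$-module, i.e.\ sub-$\End(H_*|_E)$-module for finite $E\subset{}^D(Crv_k)$. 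The torsion-freeness hypothesis needed to descend from $\Q$ to $\Z$ is supplied as in the proof of Proposition~\ref{refine-criterion}, noting $H_1(C,Z;\Z)$ is torsion-free since $C$ is an affine curve and $Z$ nonempty (and the general case reduces to this). Combining: condition (c$'$) of Proposition~\ref{refine-criterion} holds by Proposition~\ref{key-propose}, conditions (a),(b) hold by disjoint unions and Proposition~\ref{generate}, hence by Proposition~\ref{refine-criterion} the functor $\M''_1\to\M$ — more precisely onto its essential image $\cE$ — is an equivalence, and in particular $\M''_1\to\M$ is fully faithful. I expect the only delicate bookkeeping to be checking that the essential image $\cE$ is closed under subquotients and extensions in $\M$ (so that it is genuinely a thick abelian subcategory and the criterion applies verbatim), which follows once fullness is known but must be threaded carefully through the argument, perhaps by first proving fullness on generators and bootstrapping.
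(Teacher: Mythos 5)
Your approach has a genuine circularity and is not the route the paper takes. The crux is your verification of condition (c) (or (c$'$)) for the inclusion $\cE\subset\M$: Proposition~\ref{key-propose} computes kernels of morphisms $\alpha:A(C,Z)\to M$ \emph{in the category of $1$-motives} $\mathcal{M}_1^{\Q}$. A morphism $S(p)\to A$ in your $\cE\subset\M^{\Q}$ is, by the colimit description of $\M$, merely an $\End(H_*|_E)$-linear map for some finite subdiagram $E\subset{}^D(Sch_k)$ possibly involving higher-dimensional varieties; to feed it into Proposition~\ref{key-propose} you must first know that it is the realization of a morphism of $1$-motives. That identification of $\Hom_{\M}$ with $\Hom_{{}^t\cM_1}$ is exactly the fullness you are trying to prove. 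The same circularity infects condition (b) and the very definition of $\cE$: for $\cE$ to be an abelian (let alone thick) subcategory of $\M$ in which every object is a quotient of some $\widetilde H''_1(C,Z;\Z)$, you need stability of the image under subquotients and extensions, which in the paper are Corollary~\ref{cor:imv-st-squot} and Theorem~\ref{thm:eff-Del-conj} --- the latter being the hardest result of the paper, and both of them downstream of Proposition~\ref{M-dprime-M}. So this is not ``delicate bookkeeping'' that can be bootstrapped; the criterion of Proposition~\ref{prop:criterion-equi} gives you no purchase on morphisms computed in the \emph{larger} diagram.

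The missing idea is to leave $\M$ altogether and factor through mixed realizations. By Lemma~\ref{repalb} and universality, the square with top row $\M''_1\overset{\sim}{\to}{}^t\cM_1$ (Theorem~\ref{thm:main-thm}), bottom row $\widetilde R:\M\to\mathcal{MR}^{\sigma}$, and right column $T$ commutes. Since $T$ is fully faithful (Proposition~\ref{1mr}, proved independently via mixed Hodge structures and Galois descent) and the top arrow is an equivalence, the composite $\M''_1\to\mathcal{MR}^{\sigma}$ is fully faithful; as $\widetilde R$ is faithful, the factor $\M''_1\to\M$ is fully faithful. This two-line argument replaces the entire endomorphism-algebra analysis you propose, and it is the only place where one gets control over $\Hom_{\M}$ at this stage of the paper.
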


\begin{proof}
Indeed, the commutative square in Lemma 
\ref{repalb} gives a commutative square of 
exact faithful functors 
$$\xymatrix{\M_1'' \ar[r]^-{\sim} \ar[d] 
& {}^t\mathcal{M}_1 \ar[d] \\
\M\ar[r] & \mathcal{MR}^{\sigma}}$$
where the upper horizontal arrow is an 
equivalence by Theorem 
\ref{thm:main-thm}
and the right vertical arrow is fully faithful by 
Proposition \ref{1mr}. This proves the claim.
\end{proof}

\begin{cor}\label{cor:iff-cond-equiv}
The functor 
\begin{equation}\label{eq:iff-cond-equiv}
\nu:{}^t\mathcal{M}_1\simeq \M_1''\to \M
\end{equation}
is fully faithful and the triangle 
$$\xymatrix{{}^t\mathcal{M}_1\ar[r]^-{\nu} \ar@/_/[dr]_-{T} & \M \ar[d]^-{\widetilde{R}} \\
& \mathcal{MR}^{\sigma}}$$
is commutative up to a canonical isomorphism.
Moreover, $\M_1$ is the thick abelian subcategory 
generated by the image of $\nu$. 
\end{cor}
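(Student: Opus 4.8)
The first two assertions are immediate from what has already been proved. By construction $\nu$ is the composite of the equivalence \eqref{main-fonct-M1-cM1} of Theorem~\ref{thm:main-thm} with the functor $\M_1''\to\M$ of \eqref{compare-M0s}, so its full faithfulness is exactly Proposition~\ref{M-dprime-M}, and the commutative triangle is obtained by composing with the inverse of \eqref{main-fonct-M1-cM1} the commutative square of exact faithful functors — relating $\M_1''\to\M$, the equivalence \eqref{main-fonct-M1-cM1}, $T$ and $\widetilde R$ — already produced from Lemma~\ref{repalb} in the proof of Proposition~\ref{M-dprime-M}. Moreover, since $\nu$ factors through the inclusion $\M_1\hookrightarrow\M$ of \eqref{compare-M0s}, the thick abelian subcategory $\cN\subseteq\M$ generated by the image of $\nu$ satisfies $\cN\subseteq\M_1$; the substance of the statement is the reverse inclusion.

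To prove $\M_1\subseteq\cN$, I would invoke Proposition~\ref{generate}: $\M_1$ is generated, as a thick abelian subcategory, by the $\widetilde H_i(X,Y;\Z)$ with $\dim X\le 1$ and $i\le 1$, so it is enough to put every such generator in $\cN$. For $i<0$ the Betti realisation vanishes, hence so does the motive, the forgetful functor $\M\to\Zmod$ being faithful; and $\widetilde H_1(X,Y;\Z)=0$ already when $\dim X\le 0$, since $H_1$ of a pair of finite sets of points is zero. For $i=0$, the long exact sequence of the triple $(X,Y,\emptyset)$ (using $\widetilde H_{-1}(Y;\Z)=0$) makes $\widetilde H_0(X,Y;\Z)$ a quotient of $\widetilde H_0(X;\Z)$, itself a quotient of $\widetilde H_0(W;\Z)$ for a $0$-dimensional closed subscheme $W=\coprod_j\Spec l_j\subseteq X$ (with $l_j/k$ finite) meeting every connected component of $X$, surjectivity being checked after the Betti functor. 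By the additivity of condition \ref{prop:criterion-equi}(a) one has $\widetilde H_0(W;\Z)\simeq\bigoplus_j\widetilde H_0(\Spec l_j;\Z)$, and each $\widetilde H_0(\Spec l;\Z)$ lies in the image of $\nu$: taking $C=\Aff^1_l$ as a $k$-scheme and $Z\subseteq C$ the union of two disjoint $l$-rational sections, one computes $A(C,Z)=[\Z_{tr}(l)\to 0]$, while the triple $(C,Z,\emptyset)$ together with $\widetilde H_1(\Aff^1_l;\Z)=0$ identifies $\widetilde H_0(\Spec l;\Z)$ with $\widetilde H_1(C,Z;\Z)=\nu(A(C,Z))$. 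Since $\cN$ is closed under finite direct sums and quotients, the case $i=0$ follows.

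There remains $i=1$ with $\dim X=1$. Following the reductions in the proof of Proposition~\ref{generate}, we may assume $\dim Y<\dim X$, so that $Y$ is a finite set of closed points. Put $Y'=Y\cup X_{\mathrm{sing}}$: the triple $(X,Y',Y)$ has $\widetilde H_1(Y',Y;\Z)=0$ and $\widetilde H_0(Y',Y;\Z)\in\cN$ by the case $i=0$, so it suffices to treat $\widetilde H_1(X,Y';\Z)$. The normalization $X^\nu\to X$ is a projective birational — hence blow-up — morphism which is an isomorphism over $X\setminus Y'$, so by the blow-up invariance already used in Proposition~\ref{generate} we may replace $X$ by the smooth curve $X^\nu$ and $Y'$ by its (still finite) preimage. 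Deleting from each projective connected component of $X^\nu$ one closed point outside that preimage produces a smooth affine curve $C$ with a finite closed subset $Z$, and the inclusion $C\hookrightarrow X^\nu$ induces an isomorphism $\widetilde H_1(C,Z;\Z)\simeq\widetilde H_1(X^\nu,Y';\Z)$, since it becomes one after the Betti functor (deleting one point from a smooth projective curve leaves $H_1$ unchanged) and this functor is faithful and exact. As $(C,Z,1)\in{}^D(Crv_k)$, one has $\widetilde H_1(C,Z;\Z)=\nu(A(C,Z))\in\cN$, which finishes the proof.

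The main obstacle is this last step: transporting an arbitrary pair of dimension $\le 1$ down to a smooth affine curve with finitely many marked points, while checking through the various long exact sequences that every object produced on the way — the $\widetilde H_0$-boundary terms, the kernels and cokernels of the comparison maps — already belongs to $\cN$, so that in particular no motive of homological degree outside $\{0,1\}$ and none attached to a scheme of dimension $>1$ is ever introduced. Once the $0$-motive computation of the second paragraph (notably $\widetilde H_0(\Spec l;\Z)\simeq\nu([\Z_{tr}(l)\to 0])$) and the blow-up invariance are in hand, this is a routine bookkeeping exercise using only the stability of $\cN$ under subobjects, quotients, extensions and finite direct sums.
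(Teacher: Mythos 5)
Your proof follows the paper's argument step for step: full faithfulness via Proposition \ref{M-dprime-M}, the commutative triangle via Lemma \ref{repalb}, and the generation statement via Proposition \ref{generate} followed by the same chain of reductions (enlarging $Y$ to absorb the singular locus, normalizing, deleting points to reach a smooth affine curve); your treatment of the $i=0$ case is more explicit than the paper's bare appeal to Proposition \ref{N0}, but amounts to the same thing. One small slip: deleting a closed point $c$ from a projective connected component does not in general leave $H_1$ unchanged --- if $k(c)$ is strictly larger than the field of constants of that component, each geometric component loses several points and $\widetilde H_1(C,Z;\Z)\to \widetilde H_1(X^{\nu},Y';\Z)$ is only surjective, not bijective (e.g.\ a pointless conic over $\Q$ minus a degree-$2$ closed point becomes $\G_m$ over $\C$). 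This is harmless, since surjectivity is all you need (the thick subcategory generated by the image of $\nu$ is closed under quotients) and is exactly what the paper claims at this step.
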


\begin{proof}
That $\nu$ is fully faithful follows 
from Proposition
\ref{M-dprime-M}. Also, the equality $T=\widetilde{R}\circ \nu$ is 
clear from the construction.
It remains to show that $\M_1$ is the thick 
abelian subcategory of $\M$ generated by the image of 
$\nu$. By Proposition 
\ref{generate}, it is enough to show that 
the essential image of $\nu$ contains the motives
$\widetilde{H}_i(C,Z,\Z)$ for $i\leq 1$ and 
$C$ a $k$-scheme with ${\rm dim}(C)\leq 1$.

By Proposition
\ref{N0}, this is clear for $i=0$. Thus, we may assume that
$i=1$.
If $(C,Z,1)\in {}^D(Crv_k)$, then the property
we need follows from the construction. We reduce the general
case to the previous one as follows.
We may assume that ${\rm dim}(Z)=0$. If $Z'\subset C$ is a zero dimensional sub-scheme 
containing $Z$, then $\widetilde{H}_1(C,Z,\Z)\to 
\widetilde{H}_1(C,Z',\Z)$ is injective.
Therefore we may enlarge $Z$ and 
assume that $C\smallsetminus Z$ is smooth. 
If $C''$ is the normalization of $C$ and $Z''$ is the inverse image 
of $Z$ by $C''\to C$, we have 
$\widetilde{H}_1(C'',Z'',\Z)\simeq \widetilde{H}_1(C,Z,\Z)$. 
Thus, we may assume that $C$ is smooth. Finally if $C$ is complete, 
and $c\in C\smallsetminus Z$ a closed point, the morphism
$\widetilde{H}_1(C\smallsetminus \{c\},Z;\Z)\to 
\widetilde{H}_1(C,Z;\Z)$ is surjective. Therefore, we may assume that 
$C$ is affine. This finishes the proof.
\end{proof}

\smallskip

\begin{lemma}\label{lem:iff-cond-equiv}
The following 
conditions are equivalent:
\begin{enumerate}

\item[(a)] $\nu_1$ is an equivalence of categories 
(\cf\eqref{eqn:cM1-to-EHM1}),

\item[(b)] the essential image of $\nu$ is a thick abelian subcategory of $\M$. 

\end{enumerate}
\end{lemma}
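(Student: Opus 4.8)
The plan is to prove the equivalence of (a) and (b) by exploiting the fact, recorded in Corollary~\ref{cor:iff-cond-equiv}, that $\nu$ is fully faithful and that $\M_1$ is \emph{defined} as the thick abelian subcategory of $\M$ generated by the image of $\nu$.

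First, for the implication (a)$\Rightarrow$(b): if $\nu_1:{}^t\cM_1\to \M_1$ is an equivalence, then its essential image is all of $\M_1$, which is a thick abelian subcategory of $\M$ by construction (indeed $\M_1$ is by Definition~\ref{defn:nori-n-mot-thick} a thick abelian subcategory of $\M$). But the essential image of $\nu$ coincides with the essential image of $\nu_1$, since $\nu$ factors as the composition of $\nu_1$ with the inclusion $\M_1\hookrightarrow \M$. Hence the essential image of $\nu$ equals $\M_1$, which is thick abelian in $\M$. This direction is immediate.

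For (b)$\Rightarrow$(a): suppose the essential image of $\nu$, call it $\cA$, is a thick abelian subcategory of $\M$. On the one hand, $\cA\subseteq \M_1$ trivially, since $\M_1$ contains the image of $\nu$ and hence its essential image. On the other hand, $\M_1$ is by Corollary~\ref{cor:iff-cond-equiv} the \emph{smallest} thick abelian subcategory of $\M$ containing the image of $\nu$; since $\cA$ is such a subcategory, we get $\M_1\subseteq \cA$. Therefore $\cA=\M_1$, \ie $\nu$ has essential image exactly $\M_1$. Combined with the full faithfulness of $\nu$ (Corollary~\ref{cor:iff-cond-equiv}), this shows that $\nu$ induces an equivalence ${}^t\cM_1\xrightarrow{\sim}\M_1$, which is precisely $\nu_1$. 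Hence (a) holds.

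There is essentially no obstacle here: both implications are formal manipulations with the definition of the thick abelian subcategory generated by a class of objects, together with the already-established full faithfulness of $\nu$. The only point requiring a word of care is that ``essential image is thick abelian'' is exactly the hypothesis needed to invoke minimality of $\M_1$; one should also note in passing that the essential image of a fully faithful exact functor between abelian categories, when it happens to be a thick abelian subcategory, is automatically closed under the relevant operations computed either in the source or the target — so there is no ambiguity in the notion of thickness being used.
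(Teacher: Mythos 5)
Your proof is correct and follows exactly the route the paper intends: the paper's own proof is just the one-line citation of Corollary~\ref{cor:iff-cond-equiv}, and you have spelled out the two formal implications (full faithfulness of $\nu$ plus the fact that $\M_1$ is the thick abelian subcategory generated by the image of $\nu$) that make that citation work. Nothing is missing.
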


\begin{proof}
This follows from Corollary 
\ref{cor:iff-cond-equiv}.
\end{proof}

\section{Some reductions}

In this section we start the verification of 
\ref{lem:iff-cond-equiv}(b). (By Lemma
\ref{lem:iff-cond-equiv}, this is what we still need to prove in order to complete 
the proof of Theorem \ref{Del=Nori}.)
We will see here that the essential image of the functor $\nu$ in 
\eqref{eq:iff-cond-equiv} is stable under sub-quotients
in $\M$. Stability by extensions will be the subject of 
Section 
\ref{last-sect-Deligne}.

\begin{propose}\label{prop:cM1-st-squo}
The essential image of the fully faithful exact functor 
$T:{}^t\cM_1(k)\to \mathcal{MR}^{\sigma}(k)$ 
is stable under sub-quotients. 
\end{propose}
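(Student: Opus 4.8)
The plan is to reduce the statement to the assertion that any mixed Hodge structure occurring as a subquotient of the Hodge realisation of a $1$-motive is itself the Hodge realisation of a $1$-motive, and then promote this Hodge-theoretic fact to the full category of mixed realisations using the compatibility of all the realisations. Concretely, suppose $M\in {}^t\cM_1(k)$ and $N\hookrightarrow T(M)$ (resp. $T(M)\twoheadrightarrow N$) is a subobject (resp. quotient) in $\mathcal{MR}^\sigma(k)$; I want to produce $M'\in {}^t\cM_1(k)$ with $T(M')\simeq N$. By the faithfulness of base change (Remark \ref{rem:MR-base-change}) and the fact that ${}^t\cM_1(k)\to {}^t\cM_1(\bar k)$ together with $\mathcal{MR}^\sigma(k)\to\mathcal{MR}^{\mathrm{id}}(\C)$ are faithful, and using the argument of Step 3 of Proposition \ref{1mr} to descend a $1$-motive structure along $\Gal(\bar k/k)$-invariance, it suffices to treat the case $k=\C$ — with the extra bookkeeping that the Galois action on $N_\ell$ must be respected, which is automatic since $N$ is already an object of $\mathcal{MR}^\sigma(k)$ and $T$ is full on $1$-motives.

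Over $\C$ the problem becomes: the essential image of ${}^t\cM_1(\C)\hookrightarrow \mathsf{MHS}$ (via $T$ followed by the projection of Remark \ref{rem:part-proj}) is stable under subobjects and quotients. This is the classical characterisation of the mixed Hodge structures underlying $1$-motives: a polarizable mixed Hodge structure $H$ of the appropriate type is of the form $T(M)$ for a $1$-motive with torsion $M$ if and only if $\gr^W_j H = 0$ for $j\notin\{-2,-1,0\}$, with $\gr^W_0 H$ of type $(0,0)$ (a lattice), $\gr^W_{-1}H$ polarizable of type $\{(-1,0),(0,-1)\}$ (an abelian variety up to isogeny, rigidified integrally), and $\gr^W_{-2}H$ of type $(-1,-1)$ (a torus) — see \cite[Proposition 1.5]{BRS}, \cite[Appendix C]{BK}. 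Since each of these three conditions on the weight graded pieces is visibly inherited by subobjects and quotients in $\mathsf{MHS}$ (the weight filtration is strict, Hodge types of graded pieces only restrict, polarizability passes to subquotients), the subcategory is thick; one then reconstructs the $1$-motive from the weight filtration data, the integral structure on $N_{\mathrm B}$, and the comparison isomorphism with $N_{\mathrm{dR}}$ giving the semi-abelian variety together with the lattice map.

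Having produced $M'$ over $\C$, one checks that the isomorphism $T(M')\simeq N$ in $\mathsf{MHS}$ upgrades to an isomorphism in $\mathcal{MR}^\sigma(\C)$: this is exactly the faithfulness statement of Remark \ref{rem:part-proj} for the $\ell$-adic components, combined with the fact that the de Rham and $\ell$-adic data of $N$ are determined by $N_{\mathrm B}$ via the comparison isomorphisms that are part of the datum of $N$, and these match those of $M'$ by functoriality of the realisations of $1$-motives (the compatibility is Deligne's, recorded here as Lemma \ref{repalb} in the geometric case and underlies Proposition \ref{1mr}). Descending back to $k$ via Step 3 of Proposition \ref{1mr} finishes the proof.

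The main obstacle I anticipate is the descent step: one must be careful that the subobject or quotient $N\subset T(M)$ in $\mathcal{MR}^\sigma(k)$, when base-changed to $\C$, really does come from a $1$-motive over $k$ and not merely one over $\bar k$ equipped with a descent datum that might a priori fail to be effective. The resolution is that $1$-motives with torsion over $k$ satisfy Galois descent (as in Step 2 of Proposition \ref{1mr}, this reduces to descent for lattices with torsion and for semi-abelian varieties, which is standard), so the $\Gal(\bar k/k)$-action on the reconstructed $1$-motive $M'\otimes_k\bar k$ — which exists because the action on $N_\ell$ is continuous and compatible with the weight filtration — is effective. With that in hand the rest is a matter of assembling the faithfulness remarks already available in the excerpt.
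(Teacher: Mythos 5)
Your overall architecture agrees with the paper's: reduce to $\mathbb{C}$, identify ${}^t\cM_1(\C)$ with the thick abelian subcategory of $\mathsf{MHS}$ of type $\{(0,0),(-1,0),(0,-1),(-1,-1)\}$, and note that the upgrade from $\mathsf{MHS}$ back to $\mathcal{MR}^{\sigma}$ is automatic because you produce a \emph{sub}-$1$-motive $M'\subset M$ and the projection to $\mathsf{MHS}$ is faithful and exact, so two subobjects of $T(M)$ with the same underlying sub-Hodge structure coincide. The reduction from $\bar k$ to $\C$ (sub-lattices and sub-semi-abelian varieties of base-changed objects are defined over $\bar k$) is also as in the paper.

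The gap is in the descent from $\bar k$ to $k$. You assert that the reconstructed $M'_{\bar k}\subset M\otimes_k\bar k$ carries a $\Gal(\bar k/k)$-action ``because the action on $N_\ell$ is continuous and compatible with the weight filtration,'' and you point to Step 3 of Proposition \ref{1mr}; but that step is Galois descent for \emph{morphisms}, namely $\Hom(M,M')\simeq\Hom(M_{\bar k},M'_{\bar k})^{G_k}$, not for subobjects. To equip $M'_{\bar k}$ with a descent datum you need $\sigma(M'_{\bar k})=M'_{\bar k}$ inside $M\otimes_k\bar k$ for every $\sigma\in G_k$, and the only realizations on which $\sigma$ acts are the $\ell$-adic and de Rham ones (the Betti lattice is not $G_k$-equivariant). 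You would therefore have to prove that a sub-$1$-motive of $M\otimes_k\bar k$ is determined \emph{integrally} by its $\ell$-adic realizations for all $\ell$: the de Rham or rational data only pin it down up to a finite-index/torsion ambiguity in the lattice part. This is precisely the difficulty the paper's Step 3 is designed to circumvent: it spreads $M'_{\bar k}$ out to a finite Galois extension $l/k$, forms the Weil restriction inside $M\otimes\Z_{tr}(l)$ and intersects with $M$, verifies only that the de Rham realization of the result equals $N'_{\rm dR}$ (so that $T(M')\subset N'$ has finite index), and then disposes of the remaining torsion quotient separately via the full faithfulness of Proposition \ref{1mr}. Either supply the integral $\ell$-adic rigidity lemma your descent needs, or replace that step by the Weil-restriction argument.
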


\begin{proof}
It 
suffices to prove stability by sub-objects. 
Fix a $1$-motive $M$ and a sub-object $N'\subset N$ of 
its mixed realization 
$N=T(M)$.
We need to construct a sub-$1$-motive $M'\subset M$ such that
$T(M')=N'$.
As for Proposition
\ref{1mr}, we split the proof in three steps.

\smallskip

\noindent
\emph{Step 1:}
If $k=\C$, we know that the composition 
$${}^t\cM_1(\C)\to \cM\cR^{\rm id}(\C)\to \mathsf{MHS}$$
induces an equivalence between ${}^t\cM_1(\C)$ and 
the subcategory of ${\sf MHS}$ 
consisting of mixed hodge structures of type
$$\{(0,0), (0,-1), (-1,0), (-1,-1)\}.$$
The latter is a thick abelian subcategory of ${\sf MHS}$. 
Applying this to the mixed Hodge structure determined by $N'$, we find
a sub-$1$-motive $M'\subset M$ such that the sub-objects
$T(M')\subset T(M)$ and $N'\subset T(M)$ determine the same sub-mixed Hodge structure. This implies that 
$T(M')=N'$.

\smallskip

\noindent
\emph{Step 2:}
If $k=\bar k$ is algebraically closed, 
we know that the base change functor 
$$-\otimes_k\C: {}^t\cM_1(k)\to {}^t\cM_1(\C)$$
is fully faithful. Moreover, its essential image is stable 
under sub-objects. This follows from the fact that, for 
a lattice $\mathcal{L}$ (resp. a semi-abelian variety 
$\mathcal{G}$) defined over $k$, every sub-lattice 
of $\mathcal{L}\otimes_k\C$ 
(resp. sub-semi-abelian variety of 
$\mathcal{G}\otimes_k\C$) is defined over $k$.
Therefore, to construct a sub-$1$-motive of $M$ that realizes to $N'$, it is enough to construct a sub-$1$-motive of 
$M\otimes_k\C$ that realizes to $N'\otimes_k\C$. 
We use the previous step to conclude.

\smallskip

\noindent
\emph{Step 3:}
We now consider the general case. Let $\bar{k}$ 
be the algebraic closure of $k$ in $\C$.
By the previous step, we may find a sub-$1$-motive 
$M_{\bar{k}}'\subset M\otimes_k\bar{k}$ such that
$T(M_{\bar{k}}')=N'\otimes_k\bar{k}$.

The sub-$1$-motive $M'_{\bar{k}}$ can be defined over 
a finite Galois extension $l\subset \bar{k}$, \ie there exists a
sub-$1$-motive $M'_l\subset M\otimes_kl$ such that 
$$T(M'_l\otimes_l\bar{k})=N'\otimes_k\bar{k}.$$
This implies  
that, at least, $T_{\rm dR}(M'_l)=N_{\rm dR}'\otimes_kl$.

Let $\tilde{M'}\in {}^t\cM_1(k)$ be the Weil restriction of 
$M'_l\in {}^t\cM_1(l)$. This is a sub-$1$-motive of 
$\tilde{M}=M\otimes \Z_{tr}(l)$. It is characterized by the property 
that 
$$\tilde{M}'\otimes_kl=\bigoplus_{\tau\in \hom_k(l,l)}M'_l\otimes_{l,\tau}l$$
as a sub-$1$-motive of 
$$\tilde{M}\otimes_kl=\bigoplus_{\tau\in \hom_k(l,l)}M\otimes_kl.$$
There is a canonical morphism of $1$-motives
$M\to \tilde{M}$ and we define $M'$ to be the intersection 
of $M$ and $\widetilde{M}'$ inside $\widetilde{M}$.
Then, by construction, we have $M'\otimes_kl\subset M'_{l}$.
Therefore, we also have $T(M')\subset N'$.

Now, by construction 
$$T_{\rm dR}(\widetilde{M})=T_{\rm dR}(M\otimes_kl)=T_{\rm dR}(M)\otimes_kl,$$ 
viewed as a 
$k$-vector space.
Moreover, 
$$T_{\rm dR}(\tilde{M}')={\rm T}_{\rm dR}(M_l')=N'_{\rm dR}\otimes_kl,$$ viewed
as a sub-$k$-vector space of 
$T_{\rm dR}(M\otimes_kl)$. As $T_{\rm dR}$ is an exact functor, 
it follows from the construction of $M'$ that 
$$T_{\rm dR}(M')=T_{\rm dR}(M)\cap (N'_{\rm dR}\otimes_kl) 
\qquad \text{inside} \quad T_{\rm dR}(M)\otimes_kl.$$
This shows that $T_{\rm dR}(M')=N'_{\rm dR}$.
Therefore,
$T(M')\subset N'$ has finite index.

Replacing $M$ by $M/M'$ and $N'$ by $N'/T(M')$, we may assume that 
$N'$ has zero de Rham component, \ie that
$N'$ is a torsion object of $\mathcal{MR}^{\sigma}$. 
In particular, $N'$ lies in the essential image of $\mathcal{M}_0\to \mathcal{MR}^{\sigma}$: let $\mathcal{L}$ be a torsion lattice (\ie a finite \'etale commutative group scheme over $k$)
such that $N'=T([\mathcal{L}\to 0])$. 
We may use 
Proposition
\ref{1mr}
to find a monomorphism $[\mathcal{L}\to 0]\hookrightarrow M$ 
that realizes to $N'$ as a sub-object of $N$. This finishes the proof.
\end{proof}

\begin{cor}\label{cor:imv-st-squot}
The essential image of the functor 
$\nu:{}^t\mathcal{M}_1\to \M$ is an abelian subcategory 
that is stable under sub-quotients.
\end{cor}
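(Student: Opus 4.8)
The plan is to deduce Corollary \ref{cor:imv-st-squot} directly from Proposition \ref{prop:cM1-st-squo} together with the full faithfulness and exactness of the relevant functors, with essentially no new work. First I would recall the factorization established in Corollary \ref{cor:iff-cond-equiv}: the functor $\nu:{}^t\mathcal{M}_1\to \M$ is fully faithful and exact, and the composite $\widetilde{R}\circ\nu$ is canonically isomorphic to the fully faithful exact functor $T:{}^t\mathcal{M}_1\to \mathcal{MR}^{\sigma}$. Since $\widetilde{R}:\M\to \mathcal{MR}^{\sigma}$ from \eqref{RN} is exact and faithful, a short exact sequence in $\M$ with all three terms in the essential image of $\nu$ (equivalently, of $T$ after applying $\widetilde{R}$) transports to a short exact sequence of mixed realizations.

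The key step is then to observe that the essential image of $\nu$ in $\M$ is an abelian subcategory stable under sub-quotients. Given an object $M\in {}^t\mathcal{M}_1$ and a sub-object $A\hookrightarrow \nu(M)$ in $\M$, apply $\widetilde{R}$ to obtain a sub-object $\widetilde{R}(A)\hookrightarrow \widetilde{R}(\nu(M))\cong T(M)$ in $\mathcal{MR}^{\sigma}$. By Proposition \ref{prop:cM1-st-squo}, there is a sub-$1$-motive $M'\subset M$ with $T(M')=\widetilde{R}(A)$ as sub-objects of $T(M)$. Because $\nu$ is fully faithful and exact, the monomorphism $M'\hookrightarrow M$ in ${}^t\mathcal{M}_1$ yields a monomorphism $\nu(M')\hookrightarrow \nu(M)$ in $\M$; I then need to identify this with $A\hookrightarrow \nu(M)$. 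This identification follows from the faithfulness of $\widetilde{R}$: the two subobjects $\nu(M')$ and $A$ of $\nu(M)$ have the same image under $\widetilde{R}$ inside $\widetilde{R}(\nu(M))$, and a faithful exact functor reflects the sub-object lattice (for a sub-object given by a mono $u$, faithfulness reflects injectivity of $u$ and exactness identifies $\coker u$, so $\widetilde{R}$ induces an injection on sub-objects of any fixed object). Hence $A\cong \nu(M')$ lies in the essential image of $\nu$. Stability under quotients is the dual argument (or follows by combining a sub-object with its cokernel), and abelianness of the subcategory is then automatic since $\nu$ is exact and full and $\M$ is abelian.

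I do not anticipate a serious obstacle here: the real content was already delivered in Proposition \ref{prop:cM1-st-squo} (stability of the essential image of $T$ under sub-quotients) and in Corollary \ref{cor:iff-cond-equiv} (the compatibility $\widetilde{R}\circ\nu\cong T$ and full faithfulness of $\nu$). The only mildly delicate point to state carefully is that $\widetilde{R}$, being faithful and exact, reflects inclusions of sub-objects — that is, if $\widetilde{R}(A_1)$ and $\widetilde{R}(A_2)$ coincide as sub-objects of $\widetilde{R}(B)$ then $A_1=A_2$ as sub-objects of $B$ — which is a standard lemma about exact faithful functors between abelian categories and requires no computation.
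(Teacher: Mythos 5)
Your proposal is correct and follows essentially the same route as the paper: pass to mixed realizations via $\widetilde{R}$, invoke Proposition \ref{prop:cM1-st-squo} to produce the sub-$1$-motive $M'\subset M$, and identify $\nu(M')$ with the given sub-object using a faithful exact functor (the paper uses the forgetful functor $\M\to\Zmod$ where you use $\widetilde{R}$, but the reflection-of-subobjects argument is identical).
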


\begin{proof}
Indeed, let $M$ be a $1$-motive and let
$L'\subset L$ be a sub-motive of the effective Nori motive $L=\nu(M)$.
Set $N=T(M)=\tilde{R}(\nu(M))$ and $N'=\tilde{R}(L')$.
Then $N'$ is a sub-object of
$N$ and, by Proposition
\ref{prop:cM1-st-squo}, there exists a sub-$1$-motive
$M'\subset M$ such that $T(M')=N'$.  
Then, necessarily $\nu(M')=L'$ as sub-objects of $\nu(M)$. 
Indeed, this can be checked after applying 
the forgetful functor $\M\to \Zmod$.
\end{proof}

\begin{lemma}\label{red:to-Del-ext}
To check \ref{lem:iff-cond-equiv}(b), it is enough 
to check that
the essential image of $\mathcal{M}_1^{\Q}$ by $\nu\otimes \Q$ is
stable under extensions in $\M^{\Q}$. 
\end{lemma}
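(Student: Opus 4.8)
The claim is that, to show condition \ref{lem:iff-cond-equiv}(b) — that the essential image of $\nu$ is stable under extensions in $\M$ — it suffices to prove the rational analogue: that the essential image of $\mathcal{M}_1^{\Q}$ under $\nu\otimes\Q$ is stable under extensions in $\M^{\Q}$. The plan is to produce the missing sub-object/quotient in $\M$ from its rational counterpart in $\M^{\Q}$ by descending along the change-of-coefficients functor $\M\to\M^{\Q}$, together with the torsion bookkeeping provided by the $0$-motive case (Proposition \ref{N0}) and the already established stability under sub-quotients (Corollary \ref{cor:imv-st-squot}).

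\textbf{First step: reduce to an extension problem.} Suppose we are given a short exact sequence $0\to \nu(M_1)\to L\to \nu(M_2)\to 0$ in $\M$ with $M_1,M_2\in {}^t\cM_1$; we must show $L$ lies in the essential image of $\nu$. Applying $-\otimes\Q$ (recall $\M''^{\,\Q}_n=\M''_n\otimes\Q$ and, via \eqref{chg-R-from-Z-to-Q}, that $\nu\otimes\Q$ is the analogous functor over $\Q$) gives an extension $0\to (\nu\otimes\Q)(M_1\otimes\Q)\to L\otimes\Q\to (\nu\otimes\Q)(M_2\otimes\Q)\to 0$ in $\M^{\Q}$. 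By the hypothesis of the lemma, $L\otimes\Q\simeq(\nu\otimes\Q)(M_{\Q})$ for some $M_{\Q}\in\mathcal{M}_1^{\Q}$.

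\textbf{Second step: descend from $\Q$ to $\Z$.} The object $M_{\Q}$ is a $1$-motive up to isogeny; choose a genuine integral model $M_0\in{}^t\cM_1$ with $M_0\otimes\Q\simeq M_{\Q}$ (for instance a lattice-with-torsion mapping to a semi-abelian variety representing the isogeny class). Then $\nu(M_0)\otimes\Q\simeq L\otimes\Q$ inside $\M^{\Q}$. It remains to modify $M_0$ so as to make the integral objects $\nu(M_0)$ and $L$ agree, not merely after tensoring with $\Q$. The composite isomorphism $\nu(M_0)\otimes\Q\simeq L\otimes\Q$ and its inverse are each represented by honest maps $\nu(M_0)\to L$ and $L\to\nu(M_0)$ in $\M$ after clearing denominators; the obstruction to these being mutually inverse is supported on torsion, i.e.\ the cone/kernel is a sub-quotient of $\nu(M_0)\oplus L$ whose realization has zero de Rham component. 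Using Corollary \ref{cor:imv-st-squot} (stability under sub-quotients) one replaces $M_0$ by a sub-$1$-motive with the correct de Rham part, exactly as in Step 3 of the proof of Proposition \ref{prop:cM1-st-squo}; one is then reduced to the case where the discrepancy between $\nu(M_0)$ and $L$ is a genuine torsion object of $\M$. By Proposition \ref{N0} such a torsion object is itself in the image of $\nu$ (it is a $0$-motive, hence a $1$-motive $[\mathcal{L}\to 0]$), and assembling $M_0$ with this torsion lattice using the full faithfulness of $\nu$ (Corollary \ref{cor:iff-cond-equiv}) and Proposition \ref{1mr} yields a $1$-motive $M$ with $\nu(M)\simeq L$.

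\textbf{Expected main obstacle.} The genuinely delicate point is the torsion-descent in the second step: passing from an isomorphism-up-to-isogeny in $\M^{\Q}$ to an honest isomorphism in $\M$ requires knowing that the integral obstruction is both ($i$) a $0$-motive and ($ii$) already effective in Nori's sense, so that it can be realized as a sub-object of $L$ rather than merely of its mixed realization. Part ($i$) is handled by the de Rham component argument borrowed from Proposition \ref{prop:cM1-st-squo}, Step 3, and part ($ii$) is exactly where one invokes that sub-quotients of effective Nori $1$-motives are effective (Corollary \ref{cor:imv-st-squot}) together with the already-known $0$-motive case; the bookkeeping is routine once these inputs are in place, but keeping the two extension classes — the given one in $\M$ and the constructed one over $\Q$ — compatible through the descent is the step that needs care.
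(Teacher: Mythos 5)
Your overall strategy (rationalize the extension, pick an integral model $M_0$ of the resulting $1$-motive up to isogeny, and control the integral discrepancy, which is torsion) is the right starting point, and your first step is fine. But the second step has a genuine gap at the word ``assembling''. After clearing denominators you obtain maps $f:\nu(M_0)\to L$ and $g:L\to\nu(M_0)$ with $g\circ f$ and $f\circ g$ equal to multiplication by a nonzero integer $n$; hence $\ker(g)$ is $n$-torsion and $L$ sits in a short exact sequence $0\to\ker(g)\to L\to\operatorname{im}(g)\to 0$ with $\operatorname{im}(g)$ in the essential image of $\nu$ (by Corollary \ref{cor:imv-st-squot}) and $\ker(g)$ a torsion object. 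At this point you still have to show that an extension of an object of the form $\nu(-)$ by a torsion object of the form $\nu([\mathcal{L}\to 0])$ again lies in the image of $\nu$ --- and that is precisely an instance of the statement being proved. Full faithfulness of $\nu$ (Corollary \ref{cor:iff-cond-equiv}) and Proposition \ref{1mr} control $\Hom$-groups, not extensions, so they cannot ``assemble'' the two pieces; your argument is circular here.

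The paper closes this gap by a four-case analysis on the torsion of $M'$ and $M''$ in the given extension $0\to\nu(M')\to N\to\nu(M'')\to 0$. When $M'$ is torsion-free and $M''$ is torsion, a quasi-splitting $t:N\to\nu(M')$ with $t\circ r=n$ makes $(t,s):N\to\nu(M')\oplus\nu(M'')$ injective (this uses torsion-freeness of $M'$ in an essential way), so $N$ is a sub-object of something in the image of $\nu$. When both are torsion-free, $N$ itself is torsion-free and the rational hypothesis produces an injection $N\hookrightarrow\nu(Q)$. The general case is reduced, after tensoring with $\Z_{tr}(l)$ to split the torsion of the lattices, to these two cases plus the case where both $M'$ and $M''$ are torsion; there $N$ is torsion and one invokes Nori's theorem that \emph{all torsion objects of $\M$ are $0$-motives}. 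Note that this last input is a separate result of Nori from \cite{NN}, not Proposition \ref{N0} (which only identifies $\M'_0$ with $\mathcal{M}_0$); your proposal conflates the two. Without the case analysis and without Nori's torsion theorem, the torsion discrepancy cannot be absorbed, so the proof as written does not go through.
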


\begin{proof}
In view of Corollary
\ref{cor:imv-st-squot}, it remains to check that 
the essential image of ${}^t\mathcal{M}_1$ by $\nu$ is stable under extensions in 
$\M$. We need to prove this property assuming its 
rational analogue. 
So, consider an exact sequence in $\M$:
\begin{equation}\label{ext-nu-M-N-M-1}
0\to \nu(M')\overset{r}{\to} N \overset{s}{\to} \nu(M'') \to 0
\end{equation}
where $M'$ and $M''$ are $1$-motives. 
To show that $N$ is in the essential image of $\nu$, we 
consider several special cases.

\smallskip

\noindent
\emph{Case 1:} $M'$ is torsion-free and 
$M''$ is torsion.
Then $\nu(M')\to N$ is an isomorphism in 
$\M^{\Q}$. Therefore, there exists a morphism 
$t:N\to \nu(M')$ such that the composition 
$t\circ r$ 
is a multiplication by an non zero integer. 
Consider the morphism 
$(t,s):N\to \nu(M')\oplus \nu(M'')$.
As $M'$ is torsion-free, this morphism is injective. 
Hence, we have realized $N$ as a sub-object of the image 
of a $1$-motive by $\nu$. By Corollary
\ref{cor:imv-st-squot} we are done.

\smallskip

\noindent
\emph{Case 2:} both $M'$ and $M''$ are torsion-free.
In this case $N$ is also torsion-free (\ie its Betti 
realization is a free $\Z$-module).
Using the assumption in the statement, there exists 
a $1$-motive $M$ and an isomorphism 
$N\simeq \nu(M)$ in $\M^{\Q}$. Let 
$N\to \nu(Q)$ be a morphism in $\M^{\Q}$ inducing this isomorphism. 
As $N$ is torsion-free, this morphism is injective. Again, 
we are done by Corollary
\ref{cor:imv-st-squot}.

\smallskip

\noindent
\emph{Case 3:} $M'$ is torsion-free and $M''$ is general.
For every finite extension $l/k$, the exact sequence 
\eqref{ext-nu-M-N-M-1}
induces an exact sequence
$$0\to \nu(M'\otimes \Z_{tr}(l))\to N\otimes \nu(\Z_{tr}(l)) \to
\nu(M''\otimes \Z_{tr}(l)) \to 0$$
and, there is an injective morphism 
$N\hookrightarrow N\otimes 
\nu(\Z_{tr}(l))$.
Using Corollary
\ref{cor:imv-st-squot}, it is enough to show that 
$N\otimes 
\nu(\Z_{tr}(l))$ is in the essential image of $\nu$. 
Therefore, we may replace $M'$ and $M''$ by 
$M'\otimes\Z_{tr}(l)$ and $M''\otimes\Z_{tr}(l)$, for $l/k$ large enough, and assume that the torsion part of the lattice 
$\mathcal{L}''$ of $M''$ is a direct summand.
This implies that $M''=M''_t\oplus M''_f$ where 
$M''_t$ is a torsion $0$-motive and $M''_f$ is a torsion-free
$1$-motive. It follows that $N$ embeds in a direct sum 
$N_t\oplus N_f$ where $N_t$ and $N_f$ are extensions 
of $\nu(M')$ by $\nu(M''_t)$ and $\nu(M''_f)$. 
By cases $1$ and $2$, 
$N_t$ and $N_f$ are in the essential image of $\nu$. 
Again, 
we are done by Corollary
\ref{cor:imv-st-squot}.

\smallskip

\noindent
\emph{Case 4:} $M'$ and $M''$ are general. 
Let $M'_t\subset M'$ be the torsion part of $M'$. 
By the previous case, $N/\nu(M'_t)$ is in the essential image of $\nu$. 
In other words, we may change the exact sequence 
\eqref{ext-nu-M-N-M-1}
and assume that $M'$ is torsion.

It follows that $N\to \nu(M'')$ is an isomorphism 
in $\M^{\Q}$.
Therefore, there exists a torsion-free $1$-motive 
$M'''$ and a morphism 
$t:\nu(M''') \to N$ such that 
the composition $s\circ t$ is given by an isogeny 
from $M'''$ to $M''$.
Using case 1, it is enough to show that 
$N/\nu(M''')$ is in the essential image of $\nu$. 
The latter being an extension of $\nu(M')$ by 
$\nu(M''/M''')$, we are reduced to the case where 
$M''$ is also torsion. Now, by a result of Nori
\cite{NN}, 
all torsion objets in $\M$ are $0$-motives.
This finishes the proof of the lemma.
\end{proof}

\section{On Deligne's conjecture on extensions of $1$-motives}

\label{last-sect-Deligne}

In this section, we prove a ``piece'' of Deligne's conjecture 
\cite[2.4]{DE} on 
extensions of $1$-motives that is needed to complete the proof
of Theorem \ref{Del=Nori}.
More precisely, we prove Deligne's conjecture under 
an effectivity condition coming from Nori's formalism of mixed motives.
(This condition plays a crucial role in our argument 
and seems difficult to remove 
with the actual motivic technology.)

\subsection{Voevodsky motives}\label{subsect:VoMo}
Let $\DM_{\eff}(k;R)$ be Voevodsky's category of motives
with coefficients in $R$. 
There is a fully faithful embedding:
$$\DM_{\eff}(k;R) \hookrightarrow 
\mathbf{D}(Shv^{\rm Nis}_{tr}(k;R)).$$
Its image consists of those complexes of 
Nisnevich sheaves with transfers which are 
$\Aff^1$-local, \ie such that their Nisnevich hypercohomology
presheaves 
are $\Aff^1$-invariant. 

As usual, we denote $\DM_{\eff}^{\gm}(k;R)$ the 
subcategory of $\DM_{\eff}(k;R)$ of geometric motives.
It is the thick triangulated subcategory generated by 
motives of 
smooth $k$-schemes.

Given a pair $(X,Y)$ where 
$X$ is a $k$-scheme and $Y\subset X$ a closed subset, 
one has an object ${\rm M}(X,Y;R)\in \DM_{\eff}(k;R)$, called 
the motive of the pair. (As a presheaf with transfers, this is 
simply given by $R_{tr}(X)/R_{tr}(Y)$.) 
Since our base field has characteristic zero, 
we know, thanks to Hironaka's resolution of singularities,
that ${\rm M}(X,Y;R)$ is a geometric motive. 

Another source of examples is given by the following 
result (claimed by Voevodsky in 
\cite[\S 3.4]{V} and proved in \cite{Or}).

\begin{propose}\label{prop:orgo-embed}
There exists a fully faithful embedding 
\begin{equation}\label{orgo-embed}
{\rm Tot}:{\bf D}^b(\mathcal{M}_1^{\Q}(k)) \to 
\DM^{\gm}_{\eff}(k;\Q).
\end{equation}
It induces an equivalence of categories with  
the thick triangulated subcategory $\DM^{\gm}_{\leq 1}(k;\Q)$
generated by motives of curves.  
\end{propose}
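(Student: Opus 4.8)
The plan is to construct the functor $\mathrm{Tot}$ directly from the $t$-structure-free description of $\mathcal{M}_1^{\Q}$ as complexes $[\cF\to\cG]$, and then to identify its essential image with $\DM^{\gm}_{\leq 1}(k;\Q)$ by exhibiting generators on both sides. First I would recall the two ``building block'' motives: for a lattice $\cF$ (a $\Z$-constructible sheaf tensored with $\Q$) one has the Artin motive $\cF\otimes\Q$ placed in degree $0$, and for a semiabelian variety $\cG$ one has the Voevodsky motive $M_1(\cG)$, the degree-$1$ part of $M(\cG)$, which behaves like $\mathrm{Lie}$-plus-lattice data; the key input is that $\Hom_{\DM_{\eff}}(\mathbf{1},M_1(\cG)[i])$ and $\Hom_{\DM_{\eff}}(M_1(\cG),M_1(\cG')[i])$ are concentrated in the expected degrees and compute, respectively, the points/extensions of $\cG$ and the morphisms/$\Ext^1$ of semiabelian varieties (this is Voevodsky/Orgogozo, and with rational coefficients it is clean because $\DM_{\eff}^{\et}\otimes\Q=\DM_{\eff}\otimes\Q$). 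Sending a $1$-motive $[\cF\to\cG]$ to the total complex of $[\,\cF\otimes\Q\to M_1(\cG)\,]$ (a two-term complex in $\DM^{\gm}_{\eff}$) gives an additive functor on $\mathcal{M}_1^{\Q}$; by general nonsense (the realization-of-derived-categories criterion of Beilinson, or simply by checking that this is exact and kills nothing) it extends to $\mathrm{Tot}:{\bf D}^b(\mathcal{M}_1^{\Q})\to\DM^{\gm}_{\eff}$.

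Next I would prove full faithfulness. It suffices, since both sides are triangulated and $\mathrm{Tot}$ is exact, to check that
$$\Hom_{{\bf D}^b(\mathcal{M}_1^{\Q})}(M,M'[i])\;\longrightarrow\;\Hom_{\DM_{\eff}}(\mathrm{Tot}(M),\mathrm{Tot}(M')[i])$$
is an isomorphism for all $i$ when $M,M'$ run over the generators $[\cF\to 0]$ and $[0\to\cG]$. For $M=M'=[\cF\to 0]$ this is the full faithfulness of Artin motives inside $\DM_{\eff}\otimes\Q$. For the semiabelian generators one uses the computation of $\Hom$-groups in $\DM$ recalled above: $\Hom(M_1(\cG),M_1(\cG')[i])$ vanishes for $i\neq 0,1$, equals $\Hom(\cG,\cG')\otimes\Q$ for $i=0$, and equals $\Ext^1(\cG,\cG')\otimes\Q$ for $i=1$ — and the latter groups are exactly the $\Hom$ and $\Ext^1$ computed in ${\bf D}^b(\mathcal{M}_1^{\Q})$ for $1$-motives concentrated in semiabelian degree. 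The mixed case ($[\cF\to 0]$ against $[0\to\cG]$) reduces to $\Hom_{\DM}(\mathbf{1},M_1(\cG)[i])$ and $\Hom_{\DM}(M_1(\cG),\mathbf{1}[i])$, which vanish in the relevant degrees for weight reasons (a semiabelian motive has no morphisms to or from the unit except through its abelian quotient's $H^1$, already accounted for), matching the $\Ext$-computations in $\mathcal{M}_1^{\Q}$.

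Finally, to identify the image with $\DM^{\gm}_{\leq 1}(k;\Q)$, I would argue by generators in both directions. The image of $\mathrm{Tot}$ is a thick triangulated subcategory (it is the image of one under an exact functor between idempotent-complete triangulated categories, hence thick) containing $M_1(\cG)$ for every semiabelian $\cG$ and every Artin motive; in particular it contains $M(C)$ for every smooth curve $C$, since $M(C)$ decomposes in $\DM_{\eff}\otimes\Q$ as $\mathbf{1}\oplus M_1(C)\oplus(\text{for proper }C)\ \mathbf{1}(1)[2]$, and $\mathbf{1}(1)[2]=M_1(\G_m)[1]$ is of this form, while $M_1(C)=\mathrm{Tot}\,\mathrm{LAlb}(C)$. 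Hence $\DM^{\gm}_{\leq 1}\subset\mathrm{Im}(\mathrm{Tot})$. Conversely, every generator $[\cF\to 0]$ and $[0\to\cG]$ of $\mathcal{M}_1^{\Q}$ has $\mathrm{Tot}$ lying in $\DM^{\gm}_{\leq 1}$: Artin motives are summands of $M(\Spec l)$, and $M_1(\cG)$ for $\cG$ semiabelian is — via the extension presenting $\cG$ from a torus and an abelian variety, the latter a summand of the Jacobian of a curve (Weil), the former $M_1$ of a product of $\G_m$'s — built from motives of curves; so $\mathrm{Im}(\mathrm{Tot})\subset\DM^{\gm}_{\leq 1}$. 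Combining, $\mathrm{Tot}$ is a fully faithful exact functor onto the thick triangulated subcategory generated by curves, which is the assertion.

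\medskip

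The main obstacle I anticipate is the $\Hom$-group computation in $\DM$, specifically checking that $\Hom_{\DM_{\eff}}(M_1(\cG),M_1(\cG')[i])$ agrees with the morphism and $\Ext^1$ groups in $\mathcal{M}_1^{\Q}$ and vanishes otherwise — this is where Voevodsky's description of $\DM_{\leq 1}$ and the subtle point that these $\Hom$'s are computed étale-locally (so that $\Ext^1$ of abelian varieties, a priori a big group, is captured correctly) really enter. Since the paper cites \cite{V} and \cite{Or} for exactly this, I would lean on those results rather than reprove them, treating the rational-coefficient full faithfulness of $\mathrm{Tot}$ as essentially the content of Orgogozo's theorem and restricting my own work to assembling the functor and running the generator argument for the image.
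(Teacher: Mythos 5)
Your proposal is correct and matches the paper, which defines ${\rm Tot}$ exactly as you do and then simply refers to Orgogozo \cite{Or} for full faithfulness and the identification of the image with $\DM^{\gm}_{\leq 1}(k;\Q)$; your extra detail is a faithful sketch of what that reference proves. The only point to tighten is that the ``total complex of $[\cF\otimes\Q\to M_1(\cG)]$'' should be formed at the level of Nisnevich sheaves with transfers (taking $M_1(\cG)$ to be literally the sheaf $\cG\otimes\Q$, as the paper does), since cones are not functorial in the triangulated category and otherwise ${\rm Tot}$ would not obviously be well defined on morphisms.
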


\begin{proof}
The functor is easily defined: it sends a $1$-motive 
$[\mathcal{L}\to \mathcal{G}]$ 
to the complex of Nisnevich sheaves with transfers
$[\mathcal{L}\otimes \Q\to \mathcal{G}\otimes \Q]$
placed in homological degrees $0$ and $-1$.
For details concerning the proof, we refer the reader to 
\cite{Or}.
\end{proof}

\subsection{Voevodsky versus Nori}
Nori \cite{NN} constructed a triangulated functor
\begin{equation}\label{Noris-Gamma}
\Gamma:\DM_{\eff}^{\gm}(k;R)\to {\bf D}^b(\M^R(k)).
\end{equation}
This functor transforms ${\rm M}(X,Y;R)$ 
into a complex of effective Nori motives $\Gamma(X,Y;R)$ such that,
canonically, 
\begin{equation}\label{char-Nori-to-Voe}
H_i(\Gamma(X,Y;R))\simeq \widetilde{H}_i(X,Y;R).
\end{equation}
(In the left hand side of the above formula, $H_i(-)$ 
is the homology functor with respect to the canonical 
$t$-structure on ${\bf D}^b(\M^R(k))$.)

Nori's functor 
\eqref{Noris-Gamma}
can be used to recover all the 
realization functors on 
$\DM_{\eff}^{\gm}(k,\Q)$ constructed by Huber \cite{HUR}. 
For instance, one gets a mixed realization functor on 
$\DM_{\eff}^{\gm}(k;R)$ 
by taking the composition
$$\DM_{\eff}^{\gm}(k;\Z)\overset{\Gamma}{\to} {\bf D}^b(\M(k)) \overset{\widetilde{R}}{\to} 
{\bf D}^b(\mathcal{MR}^{\sigma}(k)).$$
(The verification that this composition is 
isomorphic to Huber's functor 
is tedious but routine; it will not be carried out in this paper.
Happily, this is not needed for any of our main results.)

\begin{propose}\label{new-Volog}
The following square commutes up to a natural isomorphism
$$\xymatrix{\mathcal{M}_1^{\Q}(k) \ar[r]^-{\nu} \ar[d]^-{\rm Tot} & \M^{\Q}(k)\ar[d] 
\\
\DM_{\eff}^{\gm}(k;\Q) \ar[r]^-{\Gamma} & {\bf D}^b(\M^{\Q}(k)).\!}$$
\end{propose}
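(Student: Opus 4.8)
\textbf{Proof proposal for Proposition \ref{new-Volog}.}
The plan is to exhibit a natural isomorphism between the two composite functors $\Gamma\circ\mathrm{Tot}$ and $(\,\cdot\,)\circ\nu$ from $\mathcal{M}_1^{\Q}(k)$ to ${\bf D}^b(\M^{\Q}(k))$ by comparing both with the Betti representation of the diagram of curves. The key point is that $\mathcal{M}_1^{\Q}(k)$ is, by Theorem \ref{thm:main-thm} together with \eqref{chg-R-from-Z-to-Q}, the universal abelian category $\cC(H_*:{}^D(Crv_k)\to\Qmod)$ attached to the Betti representation $A$; so it suffices to check that the two functors agree, compatibly with the forgetful functors to $\Qmod$, after precomposition with the universal representation $\widetilde H''_1:{}^D(Crv_k)\to \mathcal{M}_1^{\Q}$, \ie on the objects $A(C,Z)$ for $(C,Z,1)\in{}^D(Crv_k)$, and functorially in the arrows of the diagram. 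Both functors, composed with the realization $\widetilde R:{\bf D}^b(\M^{\Q})\to {\bf D}^b(\mathcal{MR}^{\sigma})$, compute the mixed realization of the curve $(C,Z)$ concentrated in degree $1$: for $\Gamma\circ\mathrm{Tot}$ this is \eqref{char-Nori-to-Voe} together with Proposition \ref{prop:orgo-embed} and Lemma \ref{repalb}; for $(\,\cdot\,)\circ\nu$ this is Corollary \ref{cor:iff-cond-equiv}. Since $\widetilde R$ is faithful, this pins down the comparison.

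First I would make the identification on objects precise. For $(C,Z,1)\in{}^D(Crv_k)$, the geometric motive $\mathrm{M}(C,Z;\Q)$ lies in $\DM^{\gm}_{\leq 1}(k;\Q)$ and, under the equivalence of Proposition \ref{prop:orgo-embed}, corresponds to $\mathrm{Tot}$ of the $1$-motive $A(C,Z)=[\,\mathrm{Div}^0_Z(C)\to\Alb^0(C)\,]$ placed so that $H_1$ recovers $A(C,Z)$; this is exactly the content of Orgogozo's computation of the $1$-motive of a curve (compatible with \cite{BK}). Applying $\Gamma$ and taking $H_1$ gives, by \eqref{char-Nori-to-Voe}, the object $\widetilde H_1(C,Z;\Q)\in\M^{\Q}_1$, which under \eqref{compare-M0s}–\eqref{main-fonct-M1-cM1} and Theorem \ref{thm:main-thm} is precisely $\nu(A(C,Z))$. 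One must also check that $\mathrm{Tot}(A(C,Z))$ is concentrated in a single cohomological degree after applying $\Gamma$, \ie that $H_i(\Gamma\circ\mathrm{Tot}(A(C,Z)))=0$ for $i\neq 1$; this follows because $\mathrm{M}(C,Z;\Q)$ is the motive of a pair of dimension $\leq 1$ whose Nori realization, by the weight/dimension considerations behind Proposition \ref{generate} and the vanishing of $\widetilde H_i(C,Z;\Q)$ for $i\notin\{0,1\}$ and $i=0$ here (since $Z\neq\emptyset$, or by splitting off the $0$-motive part), is a shift of a $1$-motive.

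Next I would promote this to a natural transformation. The cleanest route is 2-categorical: the functor $\Gamma\circ\mathrm{Tot}$, followed by the canonical functor to the derived category and then by $\widetilde R$, and the functor $\nu$ followed by $\widetilde R$, both factor (via the universal property of $\cC(H_*|_{{}^D(Crv_k)})$, \ie \cite[Theorem 41]{LV}) the same representation $T\circ A={}R|_{{}^D(Crv_k)}$ of ${}^D(Crv_k)$ into $\mathcal{MR}^{\sigma}$ through the forgetful functor; by universality the two induced functors $\mathcal{M}_1^{\Q}\to\mathcal{MR}^{\sigma}$ are canonically isomorphic. But I actually need the isomorphism already in ${\bf D}^b(\M^{\Q})$, before realizing. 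For this I use that $\nu$ together with the forgetful functor $\M^{\Q}\to\Qmod$ realizes $\mathcal{M}_1^{\Q}$ as the universal category on $A$, and that $H_1\circ\Gamma\circ\mathrm{Tot}$ together with the same forgetful functor does likewise (by the object-level computation of the previous paragraph, functoriality of $\Gamma$ and $\mathrm{Tot}$ in the arrows a) and b) of the diagram, and compatibility of $\Gamma$ with $f_*$ and with boundary maps, which is built into Nori's construction \eqref{char-Nori-to-Voe}); hence $H_1\circ\Gamma\circ\mathrm{Tot}\simeq\nu$ as functors $\mathcal{M}_1^{\Q}\to\M^{\Q}$ over $\Qmod$. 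Combined with the degree-concentration statement, this upgrades to $\Gamma\circ\mathrm{Tot}\simeq(\,\cdot\,)\circ\nu$ in ${\bf D}^b(\M^{\Q})$, where $(\,\cdot\,)$ is the inclusion of $\M^{\Q}$ as the heart.

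The main obstacle I anticipate is not any single estimate but the bookkeeping of compatibilities: one must verify that the identification $\mathrm{Tot}(A(C,Z))\simeq \mathrm{M}(C,Z;\Q)[\text{shift}]$ from Proposition \ref{prop:orgo-embed} is natural in the arrows of ${}^D(Crv_k)$ (in particular compatible with the connecting arrows $\delta:(C,Z,1)\to(Z,Z',0)$, which on the $1$-motive side come from the lattice $\mathrm{Div}^0_Z(C)$ and on the Voevodsky side from the triangle of the pair), and that Nori's functor $\Gamma$ in \eqref{Noris-Gamma} is compatible with these same connecting maps under \eqref{char-Nori-to-Voe}. This last point is exactly the naturality assertion implicit in \cite{NN} and \cite{HUR}; I would invoke it rather than reprove it. Once these compatibilities are in hand, the universal property of Nori's construction does the rest, and no torsion subtleties intervene since we have inverted isogenies throughout.
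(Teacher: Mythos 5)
Your strategy is the paper's own: reduce, via Theorem \ref{thm:main-thm} and the universal property of Nori's construction, to checking the square on the diagram ${}^D(Crv_k)$, and there use Voevodsky's triangle ${\rm Tot}(A(C,Z))\to {\rm M}(C,Z;\Q)[-1]\to \mathcal{L}[-1]$ together with \eqref{char-Nori-to-Voe}. There is, however, one genuine gap, precisely at the step that makes the universal property applicable. To compare $\nu$ with $H_1\circ\Gamma\circ{\rm Tot}$ by universality, the latter must be an \emph{exact} $\Q$-linear functor $\mathcal{M}_1^{\Q}\to\M^{\Q}$ compatible with the forgetful functors; equivalently, $\Gamma\circ{\rm Tot}$ must land in the heart of the canonical $t$-structure of ${\bf D}^b(\M^{\Q})$ on \emph{every} object of $\mathcal{M}_1^{\Q}$, not only on the generators $A(C,Z)$. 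You verify concentration only for $A(C,Z)$ (via the curve motive). For a general short exact sequence of $1$-motives, the long exact homology sequence of the triangle $\Gamma{\rm Tot}(M')\to\Gamma{\rm Tot}(M)\to\Gamma{\rm Tot}(M'')$ has a priori nonzero connecting maps, so neither exactness of $H_1\circ\Gamma\circ{\rm Tot}$ nor the final "upgrade" from $H_1\circ\Gamma\circ{\rm Tot}\simeq\nu$ to $\Gamma\circ{\rm Tot}\simeq\nu[{\rm shift}]$ follows; and concentration does not propagate from generators to arbitrary subquotients by a formal d\'evissage (the resolutions by objects $A(C,Z)$ do not terminate). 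The paper's proof devotes its entire first step to exactly this point: since the forgetful functor detects the heart, it suffices to check concentration after the Betti realization, and one then reduces by the weight filtration to the three cases of a lattice, a torus, and an abelian variety, the last being handled by exhibiting $A\otimes\Q[-1]$ as a direct factor of ${\rm M}(C\smallsetminus\{c_1\},c_2;\Q)[-1]$. You need to supply this (or an equivalent) argument before invoking universality.

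Two smaller remarks. First, your anticipated difficulty with the boundary arrows $\delta$ is vacuous: ${}^D(Crv_k)$ is the full sub-diagram on triples $(C,Z,1)$, and arrows of type b) lower the index to $0$, so they simply do not occur; only arrows of type a) must be checked, and for those the compatibility is built into \eqref{char-Nori-to-Voe} and the naturality of Voevodsky's triangle. Second, the identification of ${\rm M}(C,Z;\Q)$ with ${\rm Tot}(A(C,Z))$ should be phrased as: ${\rm Tot}(A(C,Z))$ is a direct factor of ${\rm M}(C,Z;\Q)[-1]$ with complement the $0$-motive $\mathcal{L}[-1]$; since $H_0(\Gamma(\mathcal{L}[-1]))=0$, the non-canonicity of the splitting is harmless after applying $H_0\circ\Gamma$. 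You gesture at this and it is fine as stated.
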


\begin{proof}
The image of the composition of 
$$\mathcal{M}_1^{\Q}(k) \to \DM_{\eff}^{\gm}(k;\Q) \to 
{\bf D}^b(\M^{\Q}(k))$$
lies in the heart of the canonical $t$-structure on 
${\bf D}^b(\M^{\Q}(k))$. To check this, it is 
enough to prove the same claim for the composition of
$$\mathcal{M}_1^{\Q}(k) \to \DM_{\eff}^{\gm}(k;\Q) \overset{R^{\rm B}}{\to} 
{\bf D}^b(\Q)$$
where $R^{\rm B}$ is the Betti realization.
Using the weight filtration on $1$-motives, it is enough to 
consider separately the 
case of a lattice, of a torus, and of an abelian variety. 
Also, 
we may assume that $k=\C$. Then, 
the first two cases are obvious.  
For the third case, we use that for an 
abelian variety $A$, ${\rm Tot}([0\to A])=A\otimes \Q[-1]$ 
is a direct factor of 
a motive ${\rm M}(C\smallsetminus\{c_1\}, c_2;\Q)[-1]$
where $C$ is a complete smooth curve, and $c_1$ and $c_2$ are
two distinct
rational points. The Betti realization of such a motive
is the complex $H_1^{\rm B}(C)[0]$.

Due to the previous discussion, it is enough to show that 
the following diagram 
$$\xymatrix{\mathcal{M}_1^{\Q}(k) \ar[r]^-{\nu} \ar[d] & \M^{\Q}(k)
\\
\DM_{\eff}^{\gm}(k;\Q) \ar[r]^-{\Gamma} & {\bf D}^b(\M^{\Q}(k)) \ar[u]_-{H_0}\!}$$
commutes.
Now we have to deal with two 
functors from 
$\mathcal{M}_1^{\Q}$ to $\M^{\Q}$, which are $\Q$-linear and exact.
Thus, by Theorem
\ref{thm:main-thm} and universality
(\ie \cite[Theorem 41]{LV}), it will be 
enough to check that 
$$\xymatrix{{}^D(Crv_k) \ar[r]^-{\widetilde{H}} \ar[d]^-{{\rm Tot}\,\circ\, A} & \M^{\Q}
\\
\DM_{\eff}^{\gm}(k;\Q) \ar[r]^-{\Gamma} & {\bf D}^b(\M^{\Q}) \ar[u]_-{H_0}\!}$$
commutes. 
Now, for $(C,Z,1)\in {}^D(Crv_k)$, 
it follows from \cite[Theorem 3.4.2]{V} that 
there exists an exact triangle in $\DM^{\gm}_{\eff}(k;\Q)$:
$${\rm Tot}(A(C,Z)) \to {\rm M}(C,Z;\Q)[-1] \to \mathcal{L}[-1] \to$$ 
where $\mathcal{L}=\coker\{\Q_{tr}(Z) \to \Q_{tr}(\pi_1(C))\}$.
Moreover, this triangle splits (non canonically). 
As $H_0(\Gamma(\mathcal{L}[-1]))=0$, we get 
an isomorphism 
$$H_0(\Gamma({\rm Tot}(A(C,Z))))\simeq H_0(\Gamma({\rm M}(C,Z;\Q))).$$
Therefore, in the last square above, we may replace 
${\rm Tot}\circ A$ by ${\rm M}$. 
The commutativity is then a direct consequence of 
\eqref{char-Nori-to-Voe}. 
\end{proof}

Although not needed for our main objective, 
we note the following concrete consequence of 
Proposition \ref{new-Volog}.

\begin{cor}\label{cor:new-volog}
The following square commutes up to a natural isomorphism
$$\xymatrix{\mathcal{M}_1^{\Q}(k) \ar[r]^-{T} \ar[d]^-{\rm Tot} & \mathcal{MR}^{\sigma}(k;\Q)\ar[d] 
\\
\DM_{\eff}^{\gm}(k;\Q) \ar[r]^-{\widetilde{R}\circ \Gamma} & {\bf D}^b(\mathcal{MR}^{\sigma}(k;\Q))}$$
(where we set 
$\mathcal{MR}^{\sigma}(k;\Q)=\mathcal{MR}^{\sigma}(k)\otimes \Q$).
\end{cor}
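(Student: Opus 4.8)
The plan is to deduce this from Proposition~\ref{new-Volog} by post-composing with the mixed realisation functor and then invoking Corollary~\ref{cor:iff-cond-equiv}. First I would record that the $\Q$-linear extension of the exact functor \eqref{RN}, still denoted $\widetilde{R}:\M^{\Q}(k)\to\cM\cR^{\sigma}(k;\Q)$, is exact, hence induces an exact triangulated functor $\widetilde{R}:{\bf D}^b(\M^{\Q}(k))\to {\bf D}^b(\cM\cR^{\sigma}(k;\Q))$. Being the derived functor of an exact functor, it is $t$-exact; in particular, writing $\jmath$ and $\jmath'$ for the canonical embeddings of the hearts $\M^{\Q}(k)$ and $\cM\cR^{\sigma}(k;\Q)$ into their bounded derived categories, there is a canonical isomorphism of functors $\widetilde{R}\circ\jmath\simeq\jmath'\circ\widetilde{R}$.

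Next I would paste this tautological square onto the square of Proposition~\ref{new-Volog}. The latter provides a canonical isomorphism $\jmath\circ\nu\simeq\Gamma\circ{\rm Tot}$ of functors $\mathcal{M}_1^{\Q}(k)\to {\bf D}^b(\M^{\Q}(k))$; applying $\widetilde{R}$ on the left yields
$$\widetilde{R}\circ\Gamma\circ{\rm Tot}\;\simeq\;\widetilde{R}\circ\jmath\circ\nu\;\simeq\;\jmath'\circ\widetilde{R}\circ\nu.$$
The composite on the far left is precisely the ``left-then-bottom'' path of the square in the statement, so it remains only to identify $\widetilde{R}\circ\nu$ with $T$ as functors $\mathcal{M}_1^{\Q}(k)\to\cM\cR^{\sigma}(k;\Q)$.

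For this last identification I would use the commutative triangle of Corollary~\ref{cor:iff-cond-equiv}, which gives a canonical isomorphism $T\simeq\widetilde{R}\circ\nu$ on ${}^t\cM_1(k)$; since $\mathcal{M}_1^{\Q}(k)={}^t\cM_1(k)\otimes\Q$ and the three functors $\nu$, $\widetilde{R}$, $T$ are $\Z$-linear and exact (so that their rationalisations are the functors denoted the same way in the $\Q$-linear setting), this isomorphism descends to $\mathcal{M}_1^{\Q}(k)$. Composing with $\jmath'$ gives $\jmath'\circ\widetilde{R}\circ\nu\simeq\jmath'\circ T$, the ``top-then-right'' path, and chaining the displayed isomorphisms concludes.

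There is really no serious obstacle here: the corollary is a formal consequence of Proposition~\ref{new-Volog} and Corollary~\ref{cor:iff-cond-equiv}. The only two points requiring a line of justification are the $t$-exactness of the derived realisation functor (standard for derived functors of exact functors) and the passage of Corollary~\ref{cor:iff-cond-equiv} to rational coefficients.
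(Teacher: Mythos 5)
Your proposal is correct and follows exactly the paper's own argument: the corollary is deduced by pasting the square of Proposition~\ref{new-Volog} with the identity $T=\widetilde{R}\circ\nu$ from Corollary~\ref{cor:iff-cond-equiv}. The extra details you supply ($t$-exactness of the derived realisation and the passage to $\Q$-coefficients) are routine and consistent with what the paper leaves implicit.
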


\begin{proof}
This is a direct consequence of Proposition 
\ref{new-Volog} and the 
equatity $T=\widetilde{R}\circ \nu$ 
(see Corollary 
\ref{cor:iff-cond-equiv}). 
\end{proof}

\begin{remark}\label{rem:new-volog}
As a consequence we obtain that
Deligne's Hodge realization of $1$-motives
is isomorphic to the composition of Huber's Hodge
realization with 
the embedding 
$\mathcal{M}_1^{\Q}(k)\hookrightarrow \DM_{\eff}^{\gm}(k;\Q)$.
This was first proved by Vologodsky \cite{VO}. Our proof 
is arguably more conceptual. 
\end{remark}

\subsection{\/}
Let ${\sf MHS}^{\Q}_{\eff}\subset {\sf MHS}^{\Q}$ 
denote the full subcategory 
of (homologically) effective mixed Hodge structures. Also, let 
${\sf MHS}^{\Q}_{\leq 1}$ be the thick abelian subcategory of 
${\sf MHS}_{\eff}^{\Q}$ consisting of mixed Hodge structures
of type 
$$\{(0,0),(-1,0),(0,-1),(-1,-1)\}.$$
We have a fully faithful embedding
${\bf D}^b({\sf MHS}_{\leq 1}^{\Q})\to 
{\bf D}^b({\sf MHS}^{\Q})$ and a commutative square
\begin{equation}\label{square-DM-MHS}
\xymatrix{\DM^{\gm}_{\leq 1}(k;\Q) \ar[d] \ar[rr]^{R^{\rm Hdg}|_{\leq 1}} & & {\bf D}^b({\sf MHS}_{\leq 1}^{\Q}) \ar[d] \\
\DM^{\gm}_{\eff}(k;\Q) \ar[rr]^-{R^{\rm Hdg}} && {\bf D}^b({\sf MHS}_{\eff}^{\Q}).\!}
\end{equation}
(For the sake of precision, we note that 
$R^{\rm Hdg}$ is taken to be the composition of 
$$\DM^{\gm}_{\eff}(k;\Q) \overset{\Gamma}{\to}
{\bf D}^b(\M^{\Q}(k)) \overset{R^{\rm Hdg}}{\longrightarrow} {\bf D}^b({\sf MHS}^{\Q}_{\eff})$$
where the second functor is derived from 
$R^{\rm Hdg}:\M^{\Q}\to {\sf MHS}^{\Q}_{\eff}$ 
given by the universal property.)

Both vertical inclusions in 
\eqref{square-DM-MHS} 
admit left adjoints (see \cite[Theorem 2.4.1]{ABV} or \cite[Corollary 6.2.2]{BK} for the first one and 
\cite[Proposition 17.1.1]{BK} for the second one); 
they are denoted respectively by 
$\LAlb$ and $(-)_{\leq 1}$. 
From the commutativity of 
\eqref{square-DM-MHS}, we get a natural transformation 
\begin{equation}\label{eq:R-Hodge-L-Alb}
(R^{\rm Hdg} (M))_{\leq 1} \to (R^{\rm Hdg}|_{\leq 1})(\LAlb (M))=
R^{\rm Hdg}(\LAlb (M)).
\end{equation}

\begin{propose}\label{First-Del-Conj}
The natural transformation 
\eqref{eq:R-Hodge-L-Alb} is invertible. 
\end{propose}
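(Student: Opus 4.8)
The plan is to reduce the statement to an explicit verification on a set of generators. First I would note that both $M\mapsto (R^{\rm Hdg}(M))_{\leq 1}$ and $M\mapsto R^{\rm Hdg}(\LAlb(M))$ are triangulated functors $\DM^{\gm}_{\eff}(k;\Q)\to {\bf D}^b({\sf MHS}^{\Q}_{\leq 1})$: indeed $R^{\rm Hdg}$ is triangulated, and the left adjoints $(-)_{\leq 1}$ and $\LAlb$ of the two inclusions of triangulated subcategories appearing in \eqref{square-DM-MHS} are themselves triangulated. Hence \eqref{eq:R-Hodge-L-Alb} is a morphism of triangulated functors, and it suffices to prove that it is invertible on a family of objects generating $\DM^{\gm}_{\eff}(k;\Q)$ as a thick triangulated subcategory. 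Since $\car(k)=0$, resolution of singularities shows that such a family is given by the motives $M(X)=M(X,\emptyset;\Q)$ with $X$ smooth projective over $k$; as everything commutes with shifts, it is enough to treat $M=M(X)$.

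So fix $X$ smooth projective. On the source side, the homology objects of $R^{\rm Hdg}(M(X))$ are, by the definition of the representation $R$ and the exactness of the Hodge realization, Deligne's polarizable Hodge structures $H_i^{\rm B}(X;\Q)$, pure of weight $-i$; by Deligne's splitting theorem for such complexes, $R^{\rm Hdg}(M(X))\simeq\bigoplus_i H_i^{\rm B}(X;\Q)[i]$ in ${\bf D}^b({\sf MHS}^{\Q})$. A polarizable pure Hodge structure of weight $w$ lies in ${\sf MHS}^{\Q}_{\leq 1}$ when $w\in\{0,-1\}$, has $\leq 1$-part equal to its largest sub-Hodge-structure of type $(-1,-1)$ when $w=-2$, and has zero $\leq 1$-part when $w\leq -3$; since $(-)_{\leq 1}$ is exact and commutes with shifts and direct sums, this gives
\[
\big(R^{\rm Hdg}(M(X))\big)_{\leq 1}\;\simeq\;H_0^{\rm B}(X;\Q)[0]\,\oplus\,H_1^{\rm B}(X;\Q)[1]\,\oplus\,\big(H_2^{\rm B}(X;\Q)\big)_{\leq 1}[2],
\]
the last summand being the Néron--Severi part $\NS(X)_{\Q}(1)$ (up to the homology/cohomology dualization).

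On the target side I would invoke the computation of the motivic Albanese complex of a smooth projective variety (Barbieri-Viale--Kahn, \cf \cite{ABV,BK}): $\LAlb(M(X))$ corresponds, under ${\rm Tot}$, to a complex of $\mathcal{M}^{\Q}_1$ concentrated in degrees $0,1,2$ whose homology objects are $\Z_{tr}(\pi_0 X)_{\Q}$, the Albanese $1$-motive $[0\to \Alb^0(X)]$, and the Néron--Severi $1$-motive of $X$ (of multiplicative type, with character lattice $\NS(X)$). Applying the exact faithful functor $R^{\rm Hdg}\colon \mathcal{M}^{\Q}_1\to {\sf MHS}^{\Q}_{\leq 1}$, which by Corollary \ref{cor:new-volog} agrees with Deligne's classical Hodge realization of $1$-motives, together with the classical identifications of the Hodge realizations of the Albanese and Néron--Severi $1$-motives with $H_1^{\rm B}(X;\Q)$ and $\big(H_2^{\rm B}(X;\Q)\big)_{\leq 1}$ (Deligne \cite{HdgIII}, \cf Lemma \ref{repalb} for the curve case), and using Deligne's splitting theorem once more downstairs, I obtain that $R^{\rm Hdg}(\LAlb(M(X)))$ is isomorphic to the same direct sum. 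Finally I would check that \eqref{eq:R-Hodge-L-Alb} induces these two descriptions compatibly, i.e. is an isomorphism on each homology object; this amounts to unwinding the three non-zero components of the unit $M(X)\to \LAlb(M(X))$ — the projection to $M(\pi_0 X)$, the motivic Albanese map, and the cycle class onto $\LA{2}(X)$ — whose Hodge realizations are the evident isomorphisms on $H_0^{\rm B}$, $H_1^{\rm B}$ and the $(-1,-1)$-part of $H_2^{\rm B}$.

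The hard part, I expect, is not the abstract reduction of the first paragraph but the bookkeeping of the last two: extracting from \cite{ABV,BK} the precise shape of $\LAlb(M(X))$ for $X$ smooth projective (in particular its concentration in degrees $[0,2]$, equivalently $\LA{i}(X)=0$ for $i\geq 3$ — which is consistent with, and forced by, the faithfulness of $R^{\rm Hdg}$ on $\mathcal{M}^{\Q}_1$ recorded in Remark \ref{rem:part-proj}), identifying the degree-$2$ term with the $(1,1)$-part of $H^{2}(X)$, and checking that the canonical map \eqref{eq:R-Hodge-L-Alb}, rather than merely some abstract isomorphism, induces the identifications above.
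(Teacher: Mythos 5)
Your argument is correct in outline, but it takes a genuinely different route from the paper's. The paper does not reduce to smooth \emph{projective} generators: following the proof of \cite[Corollary 2.4.6]{ABV}, it reduces to motives ${\rm M}(X)$ with $X$ smooth and $\NS^1$-local in the sense of \cite[Definition 2.4.2]{ABV}. For such $X$ one has $\LAlb(X)=\Alb(X)$ concentrated in homological degrees $0$ and $1$, and correspondingly $(H_i^{\rm Hdg}(X))_{\leq 1}=H_i^{\rm Hdg}(X)$ for $i\in\{0,1\}$ and $0$ otherwise, so that $(R^{\rm Hdg}(X))_{\leq 1}=\tau_{\leq 1}R^{\rm Hdg}(X)$; the whole verification then collapses to identifying the Betti realization of ${\rm M}(X)\to\Alb(X)$ on $H_0$ and $H_1$. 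Your choice of generators forces you to confront exactly the piece this reduction is designed to avoid: the degree-$2$ term $\LA{2}(X)=\NS^*(X)$ and the claim that the Hodge realization of the unit map induces on $H_2^{\rm B}(X;\Q)$ the projection onto its $(-1,-1)$-summand. That compatibility is a Lefschetz~$(1,1)$-type statement and is essentially the content of \cite[Theorem 17.3.1]{BK} itself, so describing it as ``unwinding'' understates the work; you would need to either import it wholesale from \cite{BK} (at which point your proof is closer to a citation of that theorem than the paper's sketch is) or prove it, which is the genuinely hard input. The trade-off is that your reduction uses only the standard generators of $\DM^{\gm}_{\eff}(k;\Q)$ and the well-known shape of $\LAlb$ of a smooth projective variety, whereas the paper's relies on the less standard $\NS^1$-local dévissage from \cite{ABV} but makes the remaining check genuinely elementary. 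One more small point worth making explicit in your first paragraph: to apply $(-)_{\leq 1}$ degreewise after Deligne splitting, you should note that for a polarizable pure structure $H$ of weight $\leq -2$ the abelian-level quotient $(H)_{\leq 1}$ also computes the derived left adjoint, i.e.\ that $\Ext^n(H'',K)=0$ for all $n$, all $K\in{\sf MHS}^{\Q}_{\leq 1}$ and $H''$ the complementary summand; this follows from semisimplicity of polarizable pure structures, the vanishing of $\Ext^1$ against higher weights, and $\Ext^{\geq 2}=0$, but it is used implicitly.
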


\begin{proof}
This is essentially \cite[Theorem 17.3.1]{BK}. For completeness, 
we give a sketch of the argument (with a slight modification).
By the proof of \cite[Corollary 2.4.6]{ABV}, 
it is enough to show that 
\eqref{eq:R-Hodge-L-Alb} is invertible after evaluating on
motives  
${\rm M}(X)$, where $X$ is a smooth 
$k$-scheme which is $\NS^1$-local (in the sense of 
\cite[Definition 2.4.2]{ABV}).
Using \cite[Proposition 2.4.4]{ABV}, we have $\LAlb(X)=\Alb(X)$, where the semi-abelian variety
$\Alb(X)$ is considered as a Nisnevich sheaf with transfers. 

Similarly, writing $H^{\rm Hdg}_i(X)$ for the mixed Hodge 
structure on the $i$-th homology of $X$, we have: 
$$(H^{\rm Hdg}_i(X))_{\leq 1}=
\left\{\begin{array}{cc}
H^{\rm Hdg}_i(X) & \text{if } i\in \{0,1\},\\
0 & \text{otherwise}.
\end{array}\right.$$
In other words, we have  
$$(R^{\rm Hdg}(X))_{\leq 1}=\tau_{\leq 1}R^{\rm Hdg}(X)$$
where $\tau_{\leq 1}$ is the good truncation with 
respect to the canonical $t$-structure on 
${\bf D}^b({\sf MHS}^{\Q}_{\eff})$.

Thus, to finish the proof, we are left to show that 
the Betti realization of 
${\rm M}(X) \to \Alb(X)$ 
is isomorphic to
$R^{\rm B}(X) \to \tau_{\leq 1}R^{\rm B}(X)$. 
But, the complex $R^{\rm B}(\Alb(X))$ has homology in degree 
$0$ and $1$ (see the first part of the proof of 
Proposition \ref{new-Volog}). Moreover, we have canonically: 
$$H_0(R^{\rm B}(\Alb(X)))=H_0^{\rm B}(X) \quad\text{and}\quad
H_1(R^{\rm B}(\Alb(X)))=H_1^{\rm B}(X).$$ 
This finishes the proof.
\end{proof}

\begin{thm}\label{thm:eff-Del-conj}
Let $M\in \M^{\Q}$ be an effective 
Nori motive whose Hodge realization is
in ${\sf MHS}^{\Q}_{\leq 1}$. Then, $M$ is in the essential image of
the functor $\nu:\mathcal{M}^{\Q}_1\to \M^{\Q}$.
\end{thm}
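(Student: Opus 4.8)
The plan is to approximate $M$ by a $1$-motive, using $\LAlb$, Nori's functor $\Gamma$, and the faithfulness of the Hodge realisation. First I would reduce: by the very construction of $\M^{\Q}$ (see \eqref{C(T)=colim}), $M$ is a sub-quotient, say $B/A$ with $A\subseteq B\subseteq H$, of a finite direct sum $H\df\bigoplus_j\widetilde{H}_{i_j}(X_j,Y_j;\Q)$. Since the essential image of $\nu\colon\mathcal{M}_1^{\Q}\to\M^{\Q}$ is stable under sub-quotients (Corollary \ref{cor:imv-st-squot}), it is enough to produce a morphism $\psi\colon H\to\nu(\mathfrak{m})$ in $\M^{\Q}$, with $\mathfrak{m}$ a $1$-motive, such that $B\cap\ker(\psi)\subseteq A$; for then $M=B/A$ is a quotient of $B/(B\cap\ker\psi)\simeq\psi(B)$, which is a sub-object of $\nu(\mathfrak{m})$, and so lies in the essential image of $\nu$.

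Next I would build $\psi$. For each $j$, applying $\Gamma$ to the adjunction unit ${\rm M}(X_j,Y_j;\Q)\to\LAlb({\rm M}(X_j,Y_j;\Q))$ and taking $H_{i_j}(-)$ gives a morphism whose source is $\widetilde{H}_{i_j}(X_j,Y_j;\Q)$ by \eqref{char-Nori-to-Voe}; since $\LAlb({\rm M}(X_j,Y_j;\Q))\in\DM^{\gm}_{\leq 1}(k;\Q)\simeq{\bf D}^b(\mathcal{M}_1^{\Q})$ (Proposition \ref{prop:orgo-embed}), Proposition \ref{new-Volog} — extended to ${\bf D}^b$ using exactness of $\nu$ — identifies its target with $\nu$ applied to a $1$-motive. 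Summing over $j$ yields $\psi\colon H\to\nu(\mathfrak{m})$. The crux is to compute $R^{\rm Hdg}(\psi)$, where $R^{\rm Hdg}\colon\M^{\Q}\to{\sf MHS}^{\Q}_{\eff}$ is the (faithful, exact) Hodge component of $\widetilde{R}$, cf.\ \eqref{RN} and Remark \ref{rem:part-proj}. Recalling that on $\DM^{\gm}_{\eff}(k;\Q)$ one sets $R^{\rm Hdg}\df R^{\rm Hdg}\circ\Gamma$, Proposition \ref{First-Del-Conj} (invertibility of \eqref{eq:R-Hodge-L-Alb}), together with the compatibility of \eqref{eq:R-Hodge-L-Alb} with the adjunction units, shows that $R^{\rm Hdg}(\Gamma(\mathrm{unit}))$ is the reflection unit $R^{\rm Hdg}(\Gamma{\rm M}(X_j,Y_j;\Q))\to\big(R^{\rm Hdg}(\Gamma{\rm M}(X_j,Y_j;\Q))\big)_{\leq 1}$ onto ${\bf D}^b({\sf MHS}^{\Q}_{\leq 1})$. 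Here I would first verify that $(-)_{\leq 1}$ is induced by an \emph{exact} functor ${\sf MHS}^{\Q}_{\eff}\to{\sf MHS}^{\Q}_{\leq 1}$ — concretely, quotient a mixed Hodge structure by its part of weights $\leq-3$ and by the non-$(-1,-1)$ summand of its weight-$(-2)$ graded — so that $(-)_{\leq 1}$ commutes with $H_{i_j}(-)$ and with finite direct sums, and the unit descends to homology. The upshot is that $R^{\rm Hdg}(\psi)$ is an epimorphism whose kernel has no nonzero sub-quotient lying in ${\sf MHS}^{\Q}_{\leq 1}$.

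Finally, let $K\df\ker(\psi)$. By exactness of $R^{\rm Hdg}$ one has $R^{\rm Hdg}(B\cap K)=R^{\rm Hdg}(B)\cap R^{\rm Hdg}(K)$, and its image in $R^{\rm Hdg}(M)$ is simultaneously a sub-object of an object of ${\sf MHS}^{\Q}_{\leq 1}$ (since $R^{\rm Hdg}(M)\in{\sf MHS}^{\Q}_{\leq 1}$ by hypothesis) and a sub-quotient of $R^{\rm Hdg}(K)=\ker R^{\rm Hdg}(\psi)$, hence is $0$ by the previous step. As $R^{\rm Hdg}$ is faithful, $B\cap K$ maps to $0$ in $M=B/A$, i.e.\ $B\cap K\subseteq A$; the reduction of the first paragraph then gives that $M$ is in the essential image of $\nu$.

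I expect the main obstacle to be the computation of $R^{\rm Hdg}(\psi)$ in the second step: one must upgrade Proposition \ref{new-Volog} and the natural transformation \eqref{eq:R-Hodge-L-Alb} to statements compatible with the adjunction units, with the embedding into ${\bf D}^b$, and with taking homology, which in turn forces one to check that $(-)_{\leq 1}$ arises from an exact functor on the abelian categories. Once that identification is in place, the remainder is routine bookkeeping with weights and Hodge types through the faithful exact functor $R^{\rm Hdg}$, the geometric content being entirely absorbed into Propositions \ref{new-Volog} and \ref{First-Del-Conj}.
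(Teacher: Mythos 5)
Your proposal is correct and follows essentially the same route as the paper's proof: realize $M$ as a sub-quotient of generators, map them to $\Gamma(\LAlb(-))=\nu(\text{a $1$-motive})$ via the adjunction unit, use Proposition \ref{First-Del-Conj} to identify the Hodge realization of this map with the reflection onto ${\sf MHS}^{\Q}_{\leq 1}$, and conclude with a weight/Hodge-type argument plus Corollary \ref{cor:imv-st-squot}. Your explicit verification that $B\cap\ker(\psi)\subseteq A$ (via the exact description of $(-)_{\leq 1}$ and the absence of $\leq 1$ sub-quotients in $\ker R^{\rm Hdg}(\psi)$) is exactly what the paper compresses into ``by looking at the associated mixed Hodge structures, $M\simeq N'/N''$.''
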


\begin{proof}
We can realize $M$ as a sub-quotient 
of a Nori motive of the
form $\tilde{H}_i(X,Y;\Q)$ with $X$ a $k$-scheme and 
$Y\subset X$ a closed subset. 
Consider the motive 
${\rm M}(X,Y;\Q)\in \DM^{\gm}_{\eff}(k;\Q)$ and set 
$$A=H_i(\LAlb({\rm M}(X,Y;\Q))).$$
(In the above formula, $H_i$ is with respect to the motivic 
$t$-structure on $\DM^{\gm}_{\leq 1}(k;\Q)$ deduced from the 
canonical $t$-structure on 
${\bf D}^b(\mathcal{M}_1^{\Q})$ via 
the equivalence in Proposition
\ref{prop:orgo-embed}.)

Set $N=\Gamma(A)$. By construction, we have 
a map 
\begin{equation}\label{HiXYQN}
\widetilde{H}_i(X,Y;\Q) \to N.
\end{equation}
It is obtained by applying $H_i$ to the 
obvious morphism
$$\Gamma({\rm M}(X,Y;\Q))\to \Gamma(\LAlb(M(X,Y;\Q))).$$
In particular, Proposition 
\ref{First-Del-Conj} implies that 
\eqref{HiXYQN} induces on the associated mixed 
Hodge structures, the obvious projection:
\begin{equation}\label{Hodge-R-Hi-to-N}
H_i^{\rm Hdg}(X,Y;\Q)\to (H_i^{\rm Hdg}(X,Y;\Q))_{\leq 1}.
\end{equation}
(As before, we denote 
$H_i^{\rm Hdg}(X,Y;\Q)$ the mixed Hodge structure on the 
rational $i$-th homology of the pair $(X,Y)$.)

Now, write $M=M'/M''$ where $M'$ and $M''$ are sub-motives of 
$\widetilde{H}_i(X,Y;\Q)$. Let $N'$ and $N''$ be the images of 
$M'$ and $M''$ in $N$. It follows, 
by looking at the 
associated mixed Hodge structures, that 
$M\simeq N'/N''$. This proves that 
$M$ is a sub-quotient of $\Gamma(A)$. 
Let $A'\in \mathcal{M}^{\Q}_1$ be a $1$-motive 
such that $A={\rm Tot}(A')$. By 
Proposition \ref{new-Volog}, we have 
$$\Gamma(A)=\Gamma({\rm Tot}(A'))=\nu(A').$$
Thus, we have realized $M$ as a sub-quotient 
of an object in the image of $\nu$. Now use Corollary~\ref{cor:imv-st-squot}. (In fact,
by the commutativity of the triangle in Corollary~\ref{cor:iff-cond-equiv} and Proposition 
\ref{prop:cM1-st-squo} we are done.)
\end{proof}

\begin{remark}\label{rem:expl-Del-conj-eff}
As ${\sf MHS}_{\leq 1}^{\Q}$ is a thick abelian 
subcategory of ${\sf MHS}^{\Q}_{\eff}$,
Theorem \ref{thm:eff-Del-conj}
gives a positive answer to Deligne's conjecture
\cite[2.4]{DE} in the case where the ``geometric'' extension of 
$1$-motives is \emph{effectively geometric}, 
\ie lying in the image of the 
functor $\widetilde{R}:\M\to \mathcal{MR}^{\sigma}$ from the 
category of \emph{effective} Nori motives. 

Let $\mathsf{NHM}$ be the category of (non necessarily effective) 
Nori motives. It is natural to expect that 
Theorem 
\ref{thm:eff-Del-conj} holds more generally for 
$M\in \mathsf{NHM}$. This would give a positive 
answer to Deligne's conjecture \cite[2.4]{DE}
in full generality. On the other hand, 
it is reasonable to expect that 
$\M\subset {\sf NHM}$ is a thick abelian category. 
However, such a statement is completely out of reach and goes 
far beyond Deligne's conjecture.
\end{remark}

\begin{cor}
The essential image of $\nu:\mathcal{M}^{\Q}_1\to 
\M^{\Q}$ is stable under extensions. Thus, the proof of 
\emph{Theorem 
\ref{Del=Nori}} is complete.
\end{cor}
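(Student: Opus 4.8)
The plan is to obtain the corollary as a short formal consequence of Theorem~\ref{thm:eff-Del-conj}, exploiting that the Hodge realization is exact and that it detects membership in ${\sf MHS}^{\Q}_{\leq 1}$.

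First I would recall the reduction already in place: by Lemma~\ref{red:to-Del-ext} together with Lemma~\ref{lem:iff-cond-equiv}, it suffices to prove that the essential image of $\nu:\mathcal{M}_1^{\Q}\to\M^{\Q}$ is stable under extensions in $\M^{\Q}$; once this is done, condition \ref{lem:iff-cond-equiv}(b) holds, hence \ref{lem:iff-cond-equiv}(a), which is exactly the assertion of Theorem~\ref{Del=Nori}. So I would start from a short exact sequence $0\to\nu(M')\to N\to\nu(M'')\to 0$ in $\M^{\Q}$ with $M',M''\in\mathcal{M}_1^{\Q}$ and aim to exhibit a $1$-motive $A'$ with $N\simeq\nu(A')$.

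The key step is to apply the exact functor $R^{\rm Hdg}:\M^{\Q}\to{\sf MHS}^{\Q}_{\eff}$ (the Hodge component of $\widetilde{R}$, obtained from the universal property as in the discussion around \eqref{square-DM-MHS}). This produces a short exact sequence
$$0\to R^{\rm Hdg}(\nu(M'))\to R^{\rm Hdg}(N)\to R^{\rm Hdg}(\nu(M''))\to 0$$
in ${\sf MHS}^{\Q}_{\eff}$. By Corollary~\ref{cor:iff-cond-equiv} the composite $R^{\rm Hdg}\circ\nu$ coincides, on $1$-motives, with the Hodge component of $T$, i.e. with Deligne's Hodge realization, so the two outer terms are mixed Hodge structures of type $\{(0,0),(-1,0),(0,-1),(-1,-1)\}$ and thus lie in ${\sf MHS}^{\Q}_{\leq 1}$. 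Since ${\sf MHS}^{\Q}_{\leq 1}$ is a thick abelian subcategory of ${\sf MHS}^{\Q}_{\eff}$, it is in particular stable under extensions, whence $R^{\rm Hdg}(N)\in{\sf MHS}^{\Q}_{\leq 1}$. As $N$ is by construction an effective Nori motive, Theorem~\ref{thm:eff-Del-conj} then applies and yields that $N$ lies in the essential image of $\nu$. This proves the stability under extensions and, by the reduction above, completes the proof of Theorem~\ref{Del=Nori}.

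I do not expect a genuine obstacle in this last corollary: the only points to verify are the exactness of $R^{\rm Hdg}$ and the identification $R^{\rm Hdg}\circ\nu\cong$ Deligne's Hodge realization, both of which are immediate from the exactness and faithfulness of $\widetilde{R}$, the identity $T=\widetilde{R}\circ\nu$ of Corollary~\ref{cor:iff-cond-equiv}, and the exactness of the projection $\mathcal{MR}^{\sigma}\to{\sf MHS}$. The real work has already been carried out, namely in Proposition~\ref{prop:cM1-st-squo} (stability under sub-quotients) and in Theorem~\ref{thm:eff-Del-conj} --- whose proof rests in turn on Proposition~\ref{First-Del-Conj} and the $\LAlb$ machinery --- so the present corollary is purely a bookkeeping step assembling those inputs.
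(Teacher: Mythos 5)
Your argument is exactly the one the paper uses, merely spelled out in more detail: apply the exact Hodge realization to the extension, use that ${\sf MHS}^{\Q}_{\leq 1}$ is thick in ${\sf MHS}^{\Q}_{\eff}$ to place $R^{\rm Hdg}(N)$ in ${\sf MHS}^{\Q}_{\leq 1}$, invoke Theorem \ref{thm:eff-Del-conj}, and then conclude via Lemmas \ref{lem:iff-cond-equiv} and \ref{red:to-Del-ext}. The proposal is correct and matches the paper's proof.
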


\begin{proof}
The first claim is a consequence of 
Theorem \ref{thm:eff-Del-conj}
and the fact that 
${\sf MHS}_{\leq 1}^{\Q}$ is a thick abelian 
subcategory of ${\sf MHS}^{\Q}_{\eff}$.
The second claim follows from Lemmas
\ref{lem:iff-cond-equiv} and \ref{red:to-Del-ext}.
\end{proof}

We close the paper with the following result.

\begin{thm}\label{left-adj-EHM}
The inclusion 
$\M_1^{\Q}\hookrightarrow \M^{\Q}$ has a left adjoint, denoted 
$(-)_{\leq 1}$. Moreover, the following square is commutative
$$\xymatrix{\M^{\Q} \ar[r] \ar[d]^-{(-)_{\leq 1}} & {\sf MHS}^{\Q}_{\eff} \ar[d]^-{(-)_{\leq 1}}\\
\M^{\Q}_1 \ar[r] & {\sf MHS}_{\leq 1}^{\Q}.\!}$$
\end{thm}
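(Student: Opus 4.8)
The plan is to construct the left adjoint $(-)_{\leq 1}$ by transporting the left adjoint $\LAlb$ of $\DM^{\gm}_{\leq 1}(k;\Q)\hookrightarrow \DM^{\gm}_{\eff}(k;\Q)$ through Nori's functor $\Gamma$ and the equivalence $\nu_1\colon \mathcal{M}^{\Q}_1\simeq \M^{\Q}_1$ of Theorem~\ref{Del=Nori}. Concretely, for $M\in\M^{\Q}$ I would first realize $M$ as a subquotient of a Nori motive $\widetilde{H}_i(X,Y;\Q)=H_i(\Gamma(\mathrm{M}(X,Y;\Q)))$, apply $\LAlb$ to $\mathrm{M}(X,Y;\Q)$, and set $A_i=H_i(\LAlb(\mathrm{M}(X,Y;\Q)))$ in $\mathcal{M}^{\Q}_1$; then $\Gamma$ and Proposition~\ref{new-Volog} identify $\nu(A_i)$ with $H_i\Gamma(\LAlb(\mathrm{M}(X,Y;\Q)))$, and the map $\widetilde{H}_i(X,Y;\Q)\to\nu(A_i)$ of \eqref{HiXYQN} is, on Hodge realizations, exactly the projection $(-)_{\leq 1}$ by Proposition~\ref{First-Del-Conj}. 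The key point is that this map is the \emph{universal} map from $\widetilde{H}_i(X,Y;\Q)$ to an object of $\M^{\Q}_1$: since $\widetilde{R}$ is faithful and $T\colon \mathcal{M}^{\Q}_1\to\mathcal{MR}^{\sigma}(k;\Q)$ is fully faithful, any morphism from $\widetilde{H}_i(X,Y;\Q)$ to some $\nu(B)$ is detected on mixed realizations, where it must factor through $(H_i^{\rm Hdg}(X,Y;\Q))_{\leq 1}$ because $\mathsf{MHS}^{\Q}_{\leq 1}$ is thick (hence closed under the relevant subquotients) in $\mathsf{MHS}^{\Q}_{\eff}$.

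Next I would promote this to a genuine adjunction. By Theorem~\ref{thm:eff-Del-conj} every object of $\M^{\Q}$ with Hodge realization in $\mathsf{MHS}^{\Q}_{\leq 1}$ lies in the image of $\nu$, so $\M^{\Q}_1$ is exactly the full subcategory of $\M^{\Q}$ on objects whose Hodge (equivalently mixed) realization is of type $\{(0,0),(-1,0),(0,-1),(-1,-1)\}$; this characterization is stable under subquotients by Proposition~\ref{prop:cM1-st-squo}. Given any $M\in\M^{\Q}$, write $M=M'/M''$ with $M',M''\subset \widetilde{H}_i(X,Y;\Q)$ as in the proof of Theorem~\ref{thm:eff-Del-conj}; define $M_{\leq 1}$ to be the corresponding subquotient $N'/N''$ of $N=\nu(A_i)$, which is well-defined in $\M^{\Q}_1$ since everything can be checked after the forgetful functor $\M^{\Q}\to\Qmod$. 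The adjunction morphism $M\to M_{\leq 1}$ is induced from \eqref{HiXYQN}. To verify the universal property, for $P\in\M^{\Q}_1$ one checks that every $M\to P$ factors uniquely through $M\to M_{\leq 1}$: this reduces, by the subquotient presentation and exactness, to the case $M=\widetilde{H}_i(X,Y;\Q)$ handled above, using again that $T$ is fully faithful to pass between $\M^{\Q}_1$ and $\mathsf{MHS}^{\Q}_{\leq 1}$ and Proposition~\ref{First-Del-Conj} to identify the factorization with the truncation $(R^{\rm Hdg})_{\leq 1}$.

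Finally, the commutativity of the square reduces to comparing $\widetilde{R}\circ(-)_{\leq 1}$ and $(-)_{\leq 1}\circ\widetilde{R}$ on generators $\widetilde{H}_i(X,Y;\Q)$. By construction $(\widetilde{H}_i(X,Y;\Q))_{\leq 1}=\nu(A_i)$ with $A_i=H_i(\LAlb(\mathrm{M}(X,Y;\Q)))$, and Corollary~\ref{cor:new-volog} together with Proposition~\ref{First-Del-Conj} identifies $\widetilde{R}(\nu(A_i))=T(A_i)$ with $H_i(R^{\rm Hdg}(\LAlb(\mathrm{M}(X,Y;\Q))))=(H_i^{\rm Hdg}(X,Y;\Q))_{\leq 1}$, which is precisely $(\widetilde{R}(\widetilde{H}_i(X,Y;\Q)))_{\leq 1}$; naturality is inherited from the natural transformation \eqref{eq:R-Hodge-L-Alb}. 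The main obstacle I anticipate is the verification of the universal property — specifically, checking that an arbitrary morphism $M\to P$ with $P\in\M^{\Q}_1$ factors through $M\to M_{\leq 1}$ for general $M$ (not just a single relative homology motive), which requires juggling the subquotient presentation, exactness of $(-)_{\leq 1}$ on $\mathsf{MHS}$, and faithfulness/full-faithfulness of the realization functors; everything else is a matter of assembling results already established in the paper.
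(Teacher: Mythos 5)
Your proposal is correct and follows essentially the same route as the paper: construct $(\widetilde{H}_i(X,Y;\Q))_{\leq 1}$ via $\LAlb$ and the map \eqref{HiXYQN}, extend to a general $M=M'/M''$ by taking the corresponding subquotient, and deduce universality from the fact that the Hodge realization of $M\to(M)_{\leq 1}$ is the universal projection in ${\sf MHS}^{\Q}_{\eff}$. You in fact spell out the universality verification (via faithfulness of $\widetilde{R}$ and full faithfulness of $T$) in more detail than the paper, which simply asserts it.
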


\begin{proof}
We will show that for every effective Nori motive $M$, there exists a 
map $M\to (M)_{\leq 1}$ to an object in 
$\M_1^{\Q}$ which realizes to the analogous map 
for 
effective mixed Hodge structures. This will implies that 
$M\to (M)_{\leq 1}$ is also universal and the theorem 
will follows.
There will be a considerable overlap with 
the proof of Theorem
\ref{thm:eff-Del-conj}.

First, we consider the case of $\widetilde{H}_i(X,Y;\Q)$ for 
$(X,Y,i)\in {}^D(Sch_k)$. 
We use the map \eqref{HiXYQN} constructed in the proof of Theorem 
\ref{thm:eff-Del-conj}. Its Hodge realization
is given by 
\eqref{Hodge-R-Hi-to-N}: so we are done.

Now, let $M$ be a general effective Nori motive
and write $M=M'/M''$ with $M'\subset M''\subset 
\widetilde{H}_i(X,Y;\Q)$. 
As in the proof of Theorem
\ref{thm:eff-Del-conj},
we consider 
$(M)_{\leq 1}=(M')_{\leq 1}/(M'')_{\leq 1}$ where 
$(M')_{\leq 1}$ and $(M'')_{\leq 1}$ are the images 
of $M'$ and $M''$ in 
$(\widetilde{H}_i(X,Y;\Q))_{\leq 1}$. By construction, 
this map realizes to the projection
$R^{\rm Hdg}(M)\to R^{\rm Hgd}(M)_{\leq 1}$. This finishes the 
proof of the theorem.
\end{proof}

\subsection*{\it Acknowledgments} We would like to thank M. Nori for providing some unpublished material regarding his work. We also thank D. Arapura for some helpful conversations on Nori's work.

\end{document}